\DeclareMathOperator*{\tend}{\longrightarrow}
\DeclareMathOperator*{\D}{\rm{div}}
\DeclareMathOperator*{\wtend}{\rightharpoonup}
\theoremstyle{definition}
\newtheorem{defi}{Definition}[section]
\newtheorem{rmk}[defi]{Remark}
\theoremstyle{plane}
\newtheorem{thm}[defi]{Theorem}
\newtheorem{prop}[defi]{Proposition}
\newtheorem{cor}[defi]{Corollary}
\newtheorem{lemma}[defi]{Lemma}
\newcommand{\mc}{\mathcal}
\newcommand{\mds}{\mathds}
\newcommand{\veps}{\varepsilon}
\newcommand{\eps}{\epsilon}
\newcommand{\what}{\widehat}
\newcommand{\vphi}{\varphi}
\newcommand{\oline}{\overline}
\newcommand{\ra}{\rightarrow}
\newcommand{\g}{\gamma}
\newcommand{\s}{\sigma}
\newcommand{\de}{\delta}
\newcommand{\R}{\mathbb{R}}
\newcommand{\N}{\mathbb{N}}
\newcommand{\Z}{\mathbb{Z}}
\newcommand{\T}{\mathbb{T}}
\renewcommand{\div}{{\rm div}\,}
\newcommand{\curl}{{\rm curl}\,}
\newcommand{\Supp}{{\rm Supp}\,}
\newcommand{\dx}{ \, {\rm d} x}
\newcommand{\dt}{ \, {\rm d} t}
\def\d{\partial}
\def\div{{\rm div}\,}
\begin{document}

\newcommand{\dimitri}[1]{\textcolor{red}{[***DC: #1 ***]}}
\newcommand{\fra}[1]{\textcolor{blue}{[***FF: #1 ***]}}

\title{\textsc{\Large{\textbf{On the fast rotation asymptotics of a \\ non-homogeneous incompressible MHD system}}}}

\author{\normalsize\textsl{Dimitri Cobb}$\,^1\qquad$ and $\qquad$
\textsl{Francesco Fanelli}$\,^{2}$ \vspace{.5cm} \\
\footnotesize{$\,^{1,} \,^2\;$ \textsc{Universit\'e de Lyon, Universit\'e Claude Bernard Lyon 1}} \vspace{.1cm} \\
{\footnotesize \it Institut Camille Jordan -- UMR 5208}  \vspace{.1cm}\\
{\footnotesize 43 blvd. du 11 novembre 1918, F-69622 Villeurbanne cedex, FRANCE} \vspace{0.2cm} \vspace{.2cm} \\
\footnotesize{$\,^{1}\;$\ttfamily{cobb@math.univ-lyon1.fr}}\\
\footnotesize{$\,^{2}\;$\ttfamily{fanelli@math.univ-lyon1.fr}}
\vspace{.2cm}
}

\date\today

\maketitle

\subsubsection*{Abstract}
{\footnotesize This paper is devoted to the analysis of a singular perturbation problem for a $2$-D incompressible MHD system with density variations
and Coriolis force, in the limit of small Rossby numbers. Two regimes are considered. The first one is the quasi-homogeneous regime, where the densities are small perturbations
around a constant state. The limit dynamics is identified as an incompressible homogeneous MHD system, coupled with an additional transport equation for the limit of the density variations.
The second case is the fully non-homogeneous regime, where the densities vary around a general non-constant profile. In this case, in the limit, the equation for the magnetic field combines
with an underdetermined linear equation, which links the limit density variation function with the limit velocity field. The proof is based on a compensated compactness argument, which enables
us to consider general ill-prepared initial data. An application of Di Perna-Lions theory for transport equations allows to treat the case of density-dependent viscosity and resistivity coefficients.
}

\paragraph*{2010 Mathematics Subject Classification:}{\small 35Q35
(primary);
35B25,
76U05,
35B40,
76W05
(secondary).}

\paragraph*{Keywords: }{\small incompressible MHD; density variations; variable viscosity and resistivity; Coriolis force; low Rossby number; singular perturbation problem.}

\section{Introduction} \label{s:intro}

In this article, we study the $\epsilon \rightarrow 0^+$ asymptotics of the following non-homogeneous incompressible MHD system with Coriolis force:
\begin{equation}\label{intro_eq:MHD}
\begin{cases}
\d_t\rho\,+\,\div\big(\rho\,u\big)\,=\,0 \\[1ex]
\partial_t\big(\rho u\big)\,+\,\D\big(\rho\, u \otimes u\big)\,+\,\dfrac{1}{\epsilon}\, \nabla \pi\,+\,\dfrac{1}{\epsilon}\, \rho\, u^\perp\,=\,\D\big(\nu(\rho)\, \nabla u\big)\,+\,
\D\big(b \otimes b\big)\,-\,\dfrac{1}{2}\nabla |b|^2 \\[1ex]
\partial_t b\, +\, \D\big(u \otimes b\big) \,-\,\div\big(b \otimes u\big)\, =\,  \nabla^\perp \big( \mu(\rho)\, \curl (b) \big) \\[1ex]
\div u\,=\,\div b\, =\, 0\,.
\end{cases}
\end{equation}
These equations are set in a two dimensional domain $\Omega$, which is either the plane $\mathbb{R}^2$ or the torus $\mathbb{T}^2$. The vector fields $u$ and $b$ are the
velocity and the magnetic fields, the scalar fields $\rho\geq0$ and $\pi$ represent the density and the pressure fields and $\nu$ and $\mu$ are two
functions defined on $\mathbb{R}_+$. The notation $u^\perp$ refers the rotation of angle $\pi/2$ of the vector $u$: in other words, if $u = (u_1, u_2)$, then $u^\perp = (-u_2, u_1)$. 
Analogously, we have set $\nabla^\perp\,=\,(-\d_2,\d_1)$. We have also defined $\curl u\,=\,\d_1u_2-\d_2u_1$ to be the $\curl$ of the $2$-D vector $u$.
As we will see, since the first equation is written up to a gradient field, the term $\nabla |b|^2/2$ does not appear in the weak form of the equations.

The main goal of this article is to show that solutions $(\rho_\epsilon, u_\epsilon, b_\epsilon)_{\epsilon > 0}$ to system \eqref{intro_eq:MHD} converge (in some way) to some
functions $(\rho, u, b)$, and to describe the limit dynamics by proving that the limit $(\rho, u, b)$ solves an evolution PDE system.

\subsection{General physical remarks}

Magnetohydrodynamic models are used whenever describing a fluid which is subject to the magnetic field it generates through its own motion. Examples of such fluids range from the industrial scale,
with plasma confinement in fusion research or some types of electrolytics, to the geophysical or astrophysical scale, with atmospheric plasmas, planetary mantle convection or the solar interior.
Their mathematical study thus combines the Navier-Stokes and the Maxwell equations.

We focus on fluids on which the Coriolis force $\epsilon^{-1}\rho u^\perp$ has a major influence compared to the kinematics of the said fluid, such as large-scale fluids evolving on a celestial
body. 
The importance of this effect is measured by the Rossby number of the fluid $Ro = \epsilon$, the condition $\epsilon \ll 1$
defining the regime of large-scale planetary or stellar fluid dynamics. At the mathematical level, in the limit $\epsilon \rightarrow 0^+$, the Coriolis force can only be balanced by the pressure
term, which is reflected by the $\epsilon^{-1}$ factor in front of $\nabla \pi$.

\subsubsection{Derivation of the equations}

Let us give a few additional details concerning the derivation of the MHD equations\footnote{In order to simplify notations, we assume the electrical permittivity and the magnetic
permeability to be of unit value, which can be done by working in an appropriate set of units.}. We conduct these computations in the physically relevant three-dimensional setting
$\Omega = \mathbb{R}^3$.

First of all, the Laplace force exerted on the fluid is $f_L = j \times b$, where $j$ is the electric current density. We further assume the fluid viscosity $\nu(\rho)$ to depend on the density
$\rho$, the precise nature of the function $\nu$ depending on the exact composition of the fluid (alternate models include anisotropic scaling in the viscosity to take into
account the joint effects of turbulence and the asymmetry induced by the rotation, see \cite{CDGG}, \cite{DDG} for more on this topic). Writing Newton's law for a density-dependent fluid, we get
\begin{equation*} \label{eq:Nwt}
\rho \partial_t u + \rho (u \cdot \nabla)u + \frac{1}{\eps}\nabla \pi + \frac{1}{\epsilon} \mathfrak{C}[\rho,u] - \D \big(\nu (\rho) \nabla u\big) = j \times b\,,
\end{equation*}
where $\mathfrak{C}[\rho,u] = e^3 \times \rho u = (-\rho u_2, \rho u_1, 0)$ is the (three-dimensional) Coriolis force. The rotation axis is taken constant, parallel to the vertical
unit vector $e^3 = (0, 0, 1)$. We will comment more on this in the next paragraph.

The fluid is assumed to be \emph{non-relativistic}, with negligible velocities $|u| \ll c$ when compared to the speed of light. This justifies the use of an electrostatic
approximation in the Maxwell equations, which are simplified by omitting the time derivative of the electric field. Obviously, this is not a wild assumption since we intend to work on planetary
or stellar fluids subject to the body's rotation (see also \cite{DDG}, \cite{N}). Thus, Amp\`ere's circuital law reads
\begin{equation*}
\curl (b) \,=\, j + \partial_t e\, \approx\, j\,,
\end{equation*}
where $e$ is the electric field.
The electrical resistivity is described by Ohm's law, which links the electrical current $j$ to the electrical field $e$ and the other physical quantities:
$$j = \sigma(\rho)\,\big( e + u \times b \big)\,.$$
Note that we have assumed that the conductivity $\sigma(\rho)$ depends on the density, the precise nature of the function $\sigma$ depending on the exact composition of the fluid.
Combining the above relation with the Maxwell-Faraday equation $\partial_t b\, =\, - \curl(e)$ gives a relation describing the evolution of the magnetic field:
$$ \partial_t b \,=\, - \curl \left( \frac{1}{\sigma(\rho)}\,\curl(b)\,-\, u \times b \right)\,.$$
By noting $\mu(\rho) = 1/\sigma(\rho)$ the electrical resistivity, we thus get
\begin{equation*}\label{eq:Mxw}
\partial_t b\,+\, (u \cdot \nabla) b \,-\, (b \cdot \nabla)u\, =\, -\,\curl \big( \mu(\rho) \curl (b) \big)\,.
\end{equation*}
Next, Gauss's law for magnetism, which rules out the possibility of magnetic monopoles, gives the divergence-free condition $\D(b) = 0$.

Finally, we assume the fluid to be incompressible, so that $\D(u) = 0$. Therefore, the mass conservation equation reads
\begin{equation*}
\partial_t \rho + \D(\rho u)\, =\, \partial_t \rho + u\cdot \nabla \rho \, = \,0\,.
\end{equation*}

Putting everything together, and using the conservative form of the equations, we obtain the following three-dimensionnal MHD system,
\begin{equation}\label{eq:MHD3d}
\begin{cases}
\partial_t \rho + \D (\rho u)\, =\, 0 \\[1ex]
\partial_t(\rho u)+\D (\rho u \otimes u)+\nabla\pi+\dfrac{1}{\epsilon}\,\mathfrak{C}[\rho,u]\,=\,\D\big(\nu(\rho)\,\nabla u\big)+\D(b \otimes b)-\dfrac{1}{2}\nabla\left( |b|^2 \right) \\[1ex]
\partial_t b + \D (u \otimes b - b \otimes u)\, =\, - \curl \big( \mu(\rho) \curl (b) \big)\\[1ex]
\D(b) = \D(u) = 0\,,
\end{cases}
\end{equation}
of which system \eqref{intro_eq:MHD} is the $2$-D equivalent.



\subsubsection{Physical relevance of the system}

This paragraph is devoted to a few critical remarks concerning system \eqref{intro_eq:MHD}, and the physical setting that led to its derivation.

First of all, the model neglects any effect due to temperature variations, which is a debatable simplification, even in the case of non-conducting fluids.
For instance, ocean water density is an intricate function of the pressure, salinity and temperature, and the temperature of air masses plays a major role in weather evolution.
In those cases, dependence on the pressure is often neglected, and both temperature and salinity are assumed to evolve through a diffusion process (see \cite{Cr}, Chapter 3).
For conducting fluids, which are generally heated magma or plasmas, the temperature is expected to play an even greater role. However, the equations, as they are, already provide
interesting challenges and are widely used by physicists for practical purposes.

Secondly, we spend a few words concerning the two-dimensional setting. Our main motivation for restricting to $2$-D domains is purely technical: in our analysis, we face similar difficulties as in
\cite{FG}, devoted to the fast rotation asymptotics for density-dependent incompressible Navier-Stokes equations in dimension $d=2$.
In particular, the fast rotation limit for incompressible non-homogeneous fluids in $3$-D is a widely open problem (see more details here below).
However, let us notice that one of the common features of highly rotating fluids is to be, in a first approximation, planar:
the fluid is devoid of vertical motion and the particles move in columns. This property is known as the Taylor-Proudman theorem (see \cite{Cr}, \cite{Ped} for useful insight).
Therefore, the $2$-D setting is in itself a relevant approximation for geophysical fluids.

At this point, note that equations \eqref{intro_eq:MHD} \textit{per se} do not describe a conducting fluid confined to a quasi-planar domain. If that were the case, the magnetic field
would circulate \textit{around} the current lines, hence being orthogonal to the plane of the fluid, assuming the form $b = b_3(t, x) e^3$ for some scalar function $b_3$. Our problem,
which involves a $2$-D magnetic field $b = (b_1, b_2)$ is a projection of the full three-dimensional MHD system \eqref{eq:MHD3d}. Taking this step away from the physical problem brings us closer
to the actual form of the physically relevant $3$-D problem, and we hope it will provide a step towards its understanding.

Next, we remark that we have taken a quite simple form for the Coriolis force: namely, $\mathfrak{C}[\rho,u] = \rho u^\perp$. This means that the rotation axis is constant and normal to the plane
where the fluid moves. Of course, more complicated choices are possible. However, on the one hand this form for the rotation term is physically consistent with a fluid evolving at mid-latitudes,
in a region small enough compared to the radius of the planetary or
stellar body. On the other hand, this choice is quite common in mathematical studies, and the obtained model is already able to explain several physical phenomena.

Finally, we point out that the incompressibility assumption $\D (u) = 0$ is a valid approximation for flows in the ocean and in the atmosphere, and we will assume it.
On the other hand, by our choice of considering domains with a very simple geometry, we completely avoid boundary effects.

\subsection{Previous mathematical results on fast rotating fluids}

The mathematical study of rotating fluids is by no means new in the mathematical litterature. It has started in the 1990s with the pioneering works \cite{B-M-N_1996}-\cite{B-M-N_1997}-\cite{B-M-N_1999}
of Babin, Mahalov and Nikolaenko, and has since been deeply investigated, above all for models of homogeneous incompressible fluids. We refer to book \cite{CDGG} for a complete treatement
of the incompressible Navier-Stokes equations with Coriolis force, and for further references on this subject.

The study of fast rotation asymptotics for non-homogeneous fluids has a much more recent history. However, efforts have mainly been focused on compressible fluid models: see e.g.
\cite{FGN}, \cite{FGDN}, \cite{F-N_CPDE}, \cite{F-Lu-N}, \cite{F_MA}, \cite{K-M-N}. We refer to
\cite{F} for additional details and further references, as well as for recent developments.
On the contrary, not so many results are available for density-dependent incompressible fluids. To the best of our knowledge, the only work in this direction is \cite{FG}, treating the case
of the non-homogeneous Navier-Stokes equations in two-dimensional domains. The reason for such a gap between the compressible and the (non-homogeneous) incompressible cases is that the coupling
between the mass and the momentum equation is weaker in the latter situation than in the former one. As a consequence, less information is available on the limit points of the sequences of solutions,
and taking the limit in the equations becomes a harder task. This explains also the lack of results for $3$-D incompressible flows with variable densities.

The case of rotating MHD equations has also recieved some attention in the past years. Once again, most of the available results concern the case of homogeneous flows: for instance, we mention papers
\cite{DDG} and \cite{R}, concerning the stability of boundary layers in homogenous rotating MHD, and \cite{N}, about the stabilising effect the rotation has on solution lifespan.
See also references therein, as well as Chapter 10 of \cite{CDGG}, for further references.
On the density-dependent side, fast rotating asymptotics has recently been conducted in \cite{KLS} for compressible flows, in two space dimensions. We point out that the approach of \cite{KLS} is based
on relative entropy estimates; if on the one hand this method enables to consider also a vanishing viscosity and resistivity regime, on the other hand it requires to assume well-prepared initial data.

\subsection{Overview of the main results of the paper}
Our main motivation here is to extend the results of \cite{FG} to the case of the MHD equations. Therefore, we choose to work with incompressible density-dependent
fluids, see system \eqref{intro_eq:MHD}.
More precisely, we study the fast rotation asymptotics in two different regimes: the \emph{quasi-homogeneneous} regime (meaning that the initial densities are small variations of a constant state)
and the \emph{fully non-homogeneous} regime (when the initial densities are perturbations of a fixed \emph{non-constant} profile, in the sense of relation \eqref{NDCP} below).
In the former case, the limit dynamics is identified as a homogeneous incompressible MHD system, coupled (\textsl{via} a lower order term)
with a pure transport equation for the limit density variation function. In the latter case, we show convergence to an underdetermined equation, expressed in terms of the vorticity of the limit
velocity field and the limit density oscillation function. The fact that the limit system is underdetermined can be viewed as an expression of the weaker coupling we mentioned above, between
the mass and momentum equations.
In order to prove our results, we will resort to the techniques of \cite{FG}, based on a compensated compactness argument, which allows us to consider general \emph{ill-prepared}
initial data. 
Roughly speaking, compensated compactness consists in exploiting the structure of the equations (wirtten in the form of a wave system governing oscillations, which propagate
in the form of Poincar\'e-Rossby waves), in order to find special algebraic cancellations and relations which allow to pass to the limit in the non-linear terms, even in absence of strong convergence.
That technique goes back to the pioneering work \cite{L-M} by P.-L. Lions and Masmoudi,
where the authors dealt with the incompressible limit of the compressible Navier-Stokes equations; it was later adapted by Gallagher and Saint-Raymond in \cite{GSR}
to the context of fast rotating fluids, and then broadly exploited in similar studies (see e.g. \cite{FGDN}, \cite{F_JMFM} and \cite{F}).

We remark that, in the fully non-homogeneous regime, the limit equation (combining the mass and momentum equations of the primitive system) is \emph{linear} in the unknowns.
This is a remarkable property, which is however now quite well-understood (see e.g. \cite{GSR}, \cite{FGDN}): let us give an insight of it.
We will be able to show that, if the initial densities are small perturbations around a non-constant state (say) $\rho_0$, then, at any later time, the solutions stay close
(in a suitable topology, but quantitatively, in powers of $\eps$) to the same state $\rho_0$. Notice that this property
is not obvious at all, as the densities satisfy a pure transport equation by the velocity fields. Anyhow, as a consequence the limit density profile is exactly the initial
reference state $\rho_0$. Roughly speaking, this fact restricts much more the limit motion than in the case when the target density is constant, since the kernel of the singular perturbation
operator is smaller. The additional constraint implies that the average process (convergence in the weak formulation of the equations) tends to kill the convective term
in the limit $\eps\ra0^+$.

As a last comment, let us point out that system \eqref{intro_eq:MHD} differs from the Navier-Stokes-Coriolis equations of \cite{FG} in a crucial way: namely, the density-dependent viscosity
and resistivity terms, respectively $\D \big( \nu(\rho) \nabla u \big)$ and $\nabla^\perp \big( \mu(\rho)\, \curl (b) \big)$, introduce new difficulties in the analysis.
Indeed, the methods of \cite{FG}, which rely on compactness of the densities in spaces of negative index of regularity, are insufficient to take the limit in those non-linear terms:
overcoming this obstacle requires almost everywhere convergence of the densities. Now, the sought almost everywhere convergence is implied by strong convergence in suitable Lebesgue spaces,
which we achieve by using well-posedness results on linear transport equations proved by Di Perna and P.-L. Lions \cite{DL}.
Besides, we point out that strong convergence of the densities has the additional advantage of providing simpler proofs: where the analysis of \cite{FG} relies on paradifferential calculus to obtain
convergence of some quadratic terms, we can often replace it by plain Hölder inequalities.

\medbreak

We conclude this introduction by giving a short overview of the paper.

In the next section we fix our assumptions and state our main results, both for the quasi-homogeneous and the fully non-homogeneous regimes.
Section \ref{s:ub} is devoted to the derivation of uniform bounds for the sequence of weak solutions, which enable us to infer first convergence properties and to identify weak-limit points.
There, we will also derive constraints those limit points have to satisfy, and establish strong convergence of the density functions.
In Section \ref{s:convergence} we complete the proof of the convergence; the main part of the analysis will be devoted to passing to the limit in the convective term.
Section \ref{s:limit-system} focuses on the well-posedness of the limit system obtained in the quasi-homogeneous regime. An appendix about Littlewood-Paley theory and
paradifferential calculus will end the manuscript.

\subsubsection*{Notation and conventions}

Before starting, let us introduce some useful notation we use throughout this text.

The space domain will be denoted by $\Omega\subset\R^d$: throughout the text, we will always work in the case $d=2$.
All derivatives are (weak) derivatives, and the symbols $\nabla$, $\D$ and $\Delta$ are, unless specified otherwise, relative to the space variables.
Given a subset $U\subset\Omega$ or $U\subset\R_+\times\Omega$,
we note $\mathcal{D}(U)$ the space of compactly supported $C^\infty$ functions on $U$.
If $f$ is a tempered distribution, we note $\mathcal{F}[f] = \what{f}$ the Fourier transform of $f$ with respect to the space variables.

For $1 \leq p \leq +\infty$, we will note $L^p(\Omega) = L^p$ when there is no ambiguity regarding the domain of definition of the functions. Likewise, we omit the dependency on $\Omega$ in functional spaces when no mistake can be made.
If $X$ is a Fréchet space of functions, we note $L^p(X) = L^p(\mathbb{R}_+ ; X)$. For any finite $T > 0$, we note $L^p_T(X) = L^p([0, T] ; X)$ and $L^p_T = L^p[0, T]$.

Let $\big(f_\epsilon\big)_{\epsilon > 0}$ be a sequence of functions in a normed space $X$. If this sequence is bounded in $X$,  we use the notation $\big(f_\epsilon\big)_{\epsilon > 0} \subset X$.
If $X$ is a topological linear space, whose (topological) dual is $X'$, we note $\langle \, \cdot \, | \, \cdot \, \rangle_{X' \times X}$ the duality brackets.

Any constant will be generically noted $C$, and, whenever deemed useful, we will specify the dependencies by writing $C = C(a_1, a_2, a_3, ...)$.
In all the text, $M_p(t) \in L^p(\R_+)$ will be a generic globally $L^p$ function; on the other hand, we will use the notation $N_p(t)$ to denote a generic function in $L^p_{\rm loc}(\R_+)$.

\subsection*{Acknowledgements}

{\small
The work of the second author has been partially supported by the LABEX MILYON (ANR-10-LABX-0070) of Universit\'e de Lyon, within the program ``Investissement d'Avenir''
(ANR-11-IDEX-0007),  and by the projects BORDS (ANR-16-CE40-0027-01) and SingFlows (ANR-18-CE40-0027), all operated by the French National Research Agency (ANR).

The second author is deeply grateful to M. Paicu for interesting discussions on the MHD system.
}

\section{Main assumptions and results} \label{s:results}

Let us fix the initial domain $\Omega$ to be either the whole space $\R^2$ or the torus $\T^2$. In $\R_+\times\Omega$, we consider the following \emph{non-homogeneneous
incompressible MHD system}:
\begin{equation}\label{MHD1}
\begin{cases}
\d_t\rho\,+\,\div\big(\rho\,u\big)\,=\,0 \\[1ex]
\partial_t\big(\rho u\big)\,+\,\D\big(\rho\, u \otimes u\big)\,+\,\dfrac{1}{\epsilon}\, \nabla \pi\,+\,\dfrac{1}{\epsilon}\, \rho\, u^\perp\,=\,\D\big(\nu(\rho)\, \nabla u\big)\,+\,
\D\big(b \otimes b\big)\,-\,\dfrac{1}{2}\nabla |b|^2 \\[1ex]
\partial_t b\, +\, \D\big(u \otimes b\big) \,-\,\div\big(b \otimes u\big)\, =\,  \nabla^\perp \big( \mu(\rho)\, \curl (b) \big) \\[1ex]
\div u\,=\,\div b\, =\, 0\,.
\end{cases}
\end{equation}

In the previous system, the viscosity coefficient $\nu$ and the resistivity coefficient $\mu$ are assumed to be continuous and non-degenerate: more precisely, they satisfy 
$$
\nu\,,\;\mu\,\in\,C^0(\R_+)\,,\quad\qquad\mbox{ with, }\;\forall\;\rho\geq0\,,\qquad\quad \nu(\rho)\geq\nu_*>0\quad\mbox{ and }\quad\mu(\rho)\geq\mu_*>0\,,
$$
for some positive real numbers $\nu_*$ and $\mu_*$.

Our main goal here is to perform the limit for $\eps\ra0^+$ in equations \eqref{MHD1} for general \emph{ill-prepared} initial data. Let us then specify the assumptions on
the initial density, velocity field and magnetic field.

\subsection{Initial data}\label{SecID}

We supplement system \eqref{MHD1} with general ill-prepared initial data. Let us be more precise, and start by considering the density function: for any $0<\eps\leq1$, we take
\begin{equation*}
\rho_{0, \epsilon}\, =\, \rho_0\, +\, \epsilon\, r_{0, \epsilon}\,, \qquad\qquad \text{with} \quad\qquad \rho_0\,\in\, C^2_b(\Omega)\quad \text{ and }\quad
\big(r_{0, \epsilon}\big)_{\epsilon>0}\,\subset\, \big(L^2 \cap L^\infty\big)(\Omega)\,. 
\end{equation*}
Here above, we have denoted $ C^2_b\,:=\, C^2\cap W^{2,\infty}$. We also assume that there is a constant $\rho^* > 0$ such that,
for any $\eps>0$, one has
\begin{equation*}
0 \leq \rho_0 \leq \rho^*\qquad \qquad \text{ and }\qquad\qquad 0 \leq \rho_{0, \epsilon} \leq 2\rho^*\,.
\end{equation*}
In the case $\Omega = \mathbb{R}^2$, we require the initial densities $\rho_{0, \epsilon}$ to fulfill an extra integrability assumption. Namely,
we suppose that one of the two following (non-equivalent) conditions is satisfied: either
\begin{align}
\exists\; \delta > 0\quad\big| \qquad \left( \frac{1}{\rho_{0, \epsilon}}\,\mds{1}_{\left\{\rho_{0, \epsilon} < \delta\right\}} \right)_{\!\!\epsilon > 0}\, \subset\, L^1(\Omega)\,, \label{Extra1}
\end{align}
where the symbol $\mds{1}_A$ stands for the characteristic function of a set $A\subset\Omega$, or
\begin{align}
\exists\; p_0 \in\,]1,+\infty[\,,\quad\exists\;\oline{\rho}>0\quad \big|\qquad\left(\big(\oline{\rho}\,-\,\rho_{0, \epsilon}\big)^+ \right)_{\!\!\epsilon > 0}\,\subset\,L^{p_0}(\Omega)\,. \label{Extra2}
\end{align}
These two conditions allow for a low frequency control on the fluid velocity, uniformly with respect to $\eps$. We will comment more about them at the end of the next subsection,
and refer to Chapter 2 of \cite{L} (see conditions (2.8), (2.9) and (2.10) therein) for more details.

For the velocity field, in order to avoid the trouble of defining the speed of the fluid in a vacuum zone $\{ \rho = 0 \}$, we work instead on the momentum $m = \rho u$.
For any $\eps>0$, we take an initial momentum $m_{0, \epsilon}$ such that
\begin{equation*}
\big(m_{0, \epsilon}\big)_{\epsilon > 0}\,\subset\, L^2(\Omega)\,,\quad \qquad \left( \frac{|m_{0, \epsilon}|^2}{\rho_{0,\epsilon}} \right)_{\epsilon > 0}\, \subset\, L^1(\Omega)\,,
\end{equation*}
where we agree that $m_{0, \epsilon} = 0$ and $|m_{0, \epsilon}|^2\,/\,\rho_{0, \epsilon} = 0$ wherever $\rho_{0, \epsilon} = 0$.

Finally, let us now consider the magnetic field: we choose initial data
$$
\big(b_{0, \epsilon}\big)_{\epsilon > 0}\, \subset\, L^2(\Omega)\,,\qquad\qquad\mbox{ with, }\,\forall\,\eps>0\,,\qquad \div b_{0,\eps}\,=\,0\,.
$$

\medbreak
Because of the previous uniform bounds, we deduce that, up to an extraction, one has the weak convergence properties
\begin{equation}\label{EQcv}
m_{0, \epsilon} \wtend m_0 \quad \text{ in }\; L^2(\Omega)\,, \qquad r_{0, \epsilon} \wtend^* r_0 \quad \text{ in }\; \big(L^2 \cap L^\infty\big)(\Omega)\,,\qquad
 b_{0, \epsilon} \wtend b_0 \quad \text{ in }\; L^2(\Omega)\,,
\end{equation}
for suitable functions $m_0$, $r_0$ and $b_0$ belonging to the respective functional spaces. Notice that we obviously have the strong convergence
property $\rho_{0, \epsilon}-\rho_0 \tend 0 \text{ in } L^2\cap L^\infty $ for $\eps\ra0^+$.

\subsection{Finite energy weak solutions} \label{ss:weak}

For smooth solutions of \eqref{MHD1} related to the initial data $(\rho_0, m_0, b_0)$, we can multiply the momentum equation by $u$ and the magnetic field equation by $b$ and
integrate both equations. We have, after integration by parts,
\begin{align*}
\frac{1}{2}\, \frac{\rm d}{{\rm d} t}\, \int_\Omega \rho |u|^2\, \text{d} x\, +\,  \int_\Omega \nu(\rho)\, \left|\nabla u\right|^2\, \text{d}x\, &=\, \int_\Omega (b\cdot \nabla) b \cdot u \,\dx \\
\frac{1}{2}\, \frac{\rm d}{{\rm d} t}\, \int_\Omega |b|^2\, \text{d} x\, +\,  \int_\Omega \mu(\rho)\, \left|\curl (b) \right|^2\, \text{d}x\, &=\, \int_\Omega (b\cdot \nabla) u \cdot b\, \dx\,.
\end{align*}
One more integration by parts show that the right-hand side of both equations are opposite. So, by summing the equations and integrating over $t \in [0, T]$ (for any fixed $T>0$)
and using the non-degeneracy hypothesis on the viscosity and resistivity coefficients, we get
\begin{equation}\label{EN}
\int_\Omega \bigg( \rho(T) |u(T)|^2 + |b(T)|^2 \bigg) \dx +  \int_0^T \int_\Omega \bigg( \nu_* |\nabla u|^2 + c \mu_* |\nabla b|^2 \bigg) \dx \text{d}t \leq \int_\Omega \left( \frac{|m_0|^2}{\rho_0} + |b_0|^2 \right) \dx.
\end{equation}
In the above inequality, we have used the fact that $b$ is divergence free, which implies that for almost all times $t \geq 0$ the norms
$\| \curl b(t) \|_{L^2}$ and $\|\nabla  b(t) \|_{L^2}$ are equivalent\footnote{In fact, in dimension $2$ and for the $L^2$ norm, we have the exact equality: namely,
$\| \curl b \|_{L^2}=\| \nabla b \|_{L^2}$. Indeed, since $\xi\cdot\what b(\xi)=0$, by Cauchy-Schwarz one has that $\left|\xi^\perp\cdot\what b(\xi)\right|=\left|\xi\right|\,\left|\what b(\xi)\right|$.}.

On the other hand, $\rho$ is simply transported by the divergence-free velocity field $u$. Hence, for all $t\geq0$, one formally has
\begin{equation} \label{est:rho_L^p}
\forall\;p\,\in\,[1,+\infty]\,,\qquad\qquad
 \left\|\rho(t)\right\|_{L^p}\,=\,\left\|\rho_0\right\|_{L^p}\,.
\end{equation}

The previous inequalities give us grounds to define the notion of finite energy weak solution.

\begin{defi}\label{DEFSol}
Let $T > 0$ and  let $\big(\rho_0, m_0, b_0\big)$ be initial data fulfilling the assumptions described in Section \ref{SecID} above. We say that $\big(\rho, u, b\big)$ is a
\emph{finite energy weak solution} to system \eqref{MHD1} in $[0, T] \times \Omega$ related to the previous initial data if the following conditions are verified:
\begin{enumerate}[(i)]
\item $\rho \in L^\infty\big([0, T] \times \Omega\big)$ and $\rho \in C^0 \big([0,T];L^q_{\rm loc}(\Omega)\big)$ for all $1 \leq q < +\infty$;
\item $\rho |u|^2 \in L^\infty\big([0,T];L^1(\Omega)\big)$, with $u \in L^2\big([0,T];H^1(\Omega)\big) \cap C_w^0 ([0, T] ; L^2(\Omega))$, where the index $w$ refers to continuity with respect to the weak topology;
\item $b \in L^\infty\big([0,T];L^2(\Omega)\big) \cap C_w^0 ([0, T] ; L^2(\Omega))$, with $\nabla b\in L^2\big([0,T];L^2(\Omega)\big)$;
\item the mass equation is satisfied in the weak sense: for any $\psi \in \mathcal{D}\big([0, T[\, \times \Omega\big)$, one has
\begin{equation*}
\int_0^T \int_\Omega \bigg\{  \rho \partial_t \psi + \rho u \cdot \nabla \psi  \bigg\} \text{d}x \text{d}t =- \int_\Omega \rho_0 \psi_{|t = 0} \text{d}x\,;
\end{equation*}
\item the divergence-free conditions $\D(u) = \D(b) = 0$ are satisfied in $\mathcal{D}'\big(\,]0, T[ \,\times \Omega\big)$;
\item the momentum equation is satisfied in the weak sense: for any $\phi \in \mathcal{D}\big([0, T[ \,\times \Omega ; \mathbb{R}^2\big)$ such that $\D(\phi) = 0$,
one has
\begin{multline*} 
\hspace{-0.8cm}\int_0^T\!\!\!\int_\Omega \left\{ \rho u \cdot \partial_t \phi + \big(\rho u \otimes u - b \otimes b\big) : \nabla \phi 
- \frac{1}{\epsilon} \rho u^\perp \cdot \phi - \nu(\rho) \nabla u :\nabla \phi  \right\} \text{d}x \text{d}t=- \int_\Omega m_0 \phi_{|t = 0} \text{d}x \,;
\end{multline*}
\item the equation for the magnetic field is satisfied in the weak sense: for all $\phi \in \mathcal{D}([0, T[ \times \Omega;\R^2)$, one has
\begin{equation*}
\int_0^T \int_\Omega \bigg\{  b \cdot \partial_t \phi + \big(u \otimes b - b \otimes u\big):\nabla \phi - \mu(\rho) \curl (b) \curl (\phi)   \bigg\} \dx \text{d}t =- \int_\Omega b_0 \cdot \phi_{|t = 0} \dx \,;
\end{equation*}
\item for almost every $t \in [0, T]\,$, the energy inequality \eqref{EN} is satisfied.
\end{enumerate}
The solution $\big(\rho, u,b\big)$ is said to be \emph{global} if the above conditions hold for all $T > 0$.
\end{defi}

Existence of such finite energy weak solutions has been shown for fluids with density dependent viscosities in the case where there is no magnetic field
(namely $b \equiv 0$ in system \eqref{MHD1} above) by P.-L. Lions. His result even allows the initial density to vanish, under conditions
\eqref{Extra1} and \eqref{Extra2}. We refer to Chapter 2 of \cite{L} for more details and references.

Concerning conductive fluids, more limited results are available. Gerbeau and Le Bris prove in \cite{GB} existence of finite energy weak solutions in a bounded domain of $\mathbb{R}^3$
(their proof can be extended to $\mathbb{R}^2$ or $\mathbb{T}^2$ with standard modifications), but only for fluids with non-vanishing initial densities.
Desjardins and Le Bris do so in \cite{DLe} for cylindrical or toroidal domains based on bounded subsets of $\mathbb{R}^2$, and for flows with translation invariance.

Even if we do not have a full existence result for flows presenting vacuum patches, as described above, our arguments are robust enough to consider possible (mild)
vanishing of the density, in the same spirit of \eqref{Extra1} and \eqref{Extra2}. Therefore, we will work under those conditions, in order to accommodate possible future existence results.

\subsection{Statement of the results}

We consider a sequence of initial data $\big(\rho_{0, \epsilon}, m_{0, \epsilon}, b_{0, \epsilon}\big)_{\epsilon}$ satisfying all the assumptions and uniform bounds described in
Section \ref{SecID} above. We further consider a sequence $\big(\rho_\epsilon, u_\epsilon, b_\epsilon\big)_{\epsilon}$ of finite energy weak solutions (in the sense of Definition \ref{DEFSol})
related to those initial data.
We aim at proving some kind of convergence of the solutions $\big(\rho_\epsilon, u_\epsilon, b_\epsilon\big)_{\epsilon}$ and identify the limit dynamics for
$\epsilon \rightarrow 0^+$ in the form of a PDE solved by the limit points of the sequence. We consider two cases.

Firstly, we consider the case of a \emph{quasi-homogeneous} density, meaning that the initial density $\rho_{0, \epsilon}$ is supposed to be a perturbation of a constant density state,
say $1$ for simplicity. We then write $\rho_{0, \epsilon}\, =\, 1\, +\, \epsilon\, r_{0, \epsilon}$: this assumption simplifies the equations very much. Indeed, on the one hand, at any later time
we still have $\rho_\epsilon\, =\, 1\, +\, \epsilon\, r_\epsilon$, with $r_\epsilon$ solving a linear transport equation
\begin{equation} \label{eq:r_e}
\begin{cases}
\partial_t r_\epsilon + \D(r_\epsilon u_\epsilon) = 0\\[1ex]
\big(r_\epsilon\big)_{|t = 0} = r_{0, \epsilon}\,,
\end{cases}
\end{equation}
thanks to the divergence-free condition $\D (u_\epsilon) = 0$. On the other hand, the momentum equation can be written (in a suitable sense)
\begin{align*}
&\partial_t (\rho_\epsilon u_\epsilon) + \D \big(\rho_\epsilon u_\epsilon \otimes u_\epsilon - b_\epsilon \otimes b_\epsilon\big) + \frac{1}{\epsilon} \nabla \pi_\epsilon + 
\frac{1}{2} \nabla \big( |b_\epsilon|^2 \big) + \frac{1}{\epsilon} \rho_\epsilon u_\epsilon ^\perp - \div\big(\nu(\rho_\eps)\,\nabla u_\eps\big) \\
&\quad= \partial_t u_\epsilon + \D\big( u_\epsilon \otimes u_\epsilon - b_\epsilon \otimes b_\epsilon\big)  - \nu(1) \Delta u_\epsilon + r _\epsilon u_\epsilon^\perp+
\bigg\{ \frac{1}{\epsilon} \nabla \pi_\epsilon + \frac{1}{2} \nabla \big( |b_\epsilon|^2 \big) + \frac{1}{\epsilon} u_\epsilon ^\perp \bigg\} + O(\epsilon)\,,
\end{align*}
where the terms in the brackets (which are singular in $\eps$) are gradient terms, hence do not appear in the weak form of the equations.
Therefore, taking the limit in this case will not be too complicated. In the end, we can prove the following result.

\begin{thm}\label{THQH}
Suppose that $\rho_0 = 1$ and consider a sequence $\big(\rho_{0, \epsilon}, m_{0, \epsilon}, b_{0, \epsilon}\big)_{\epsilon > 0}$ of initial data satisfying the assumptions fixed in Section \ref{SecID}.
Let $\big(\rho_\epsilon, u_\epsilon, b_\epsilon\big)_{\epsilon > 0}$ be a sequence of corresponding finite energy weak solutions to \eqref{MHD1}.
Finally, let $m_0$, $b_0$ and $r_0$ be as in \eqref{EQcv} and define $r_\epsilon = (\rho_\epsilon - 1)/\eps$.

Then, there exists a triplet $\big(r, u, b\big)$  in the space $L^\infty\big(\R_+;L^2(\Omega) \cap L^\infty(\Omega)\big) \times L^\infty\big(\R_+;L^2(\Omega)\big)\times L^\infty\big(\R_+;L^2(\Omega)\big)$,
with $\nabla u$ and $\nabla b$ belonging to $L^2\big(\R_+;L^2(\Omega)\big)$ and $\D u=\div b = 0$, such that, up to the extraction of a subsequence, the following convergence properties hold:
for any $T > 0$, we have
\begin{enumerate}[(1)]
\item $r_\epsilon \stackrel{*}{\rightharpoonup} r\;$ in $\;L^\infty_T (L^2 \cap L^\infty)$, and $\;r_\epsilon \tend r$ in $L^2_T(L^2_{\rm loc})$;
\item $u_\epsilon \stackrel{*}{\rightharpoonup} u\;$ in $\;L^\infty(L^2) \cap L^2_T(H^1)$;
\item $b_\epsilon \stackrel{*}{\rightharpoonup} b\;$ in $\;L^\infty(L^2) \cap L^2_T(H^1)$, and $\;b_\epsilon \tend b$ in  $L^2_T(H^s_{\rm loc})$ for any $ s < 1$.
\end{enumerate}
The limit dynamics is described by a homogeneous MHD-type system, which the triplet $\big(r, u, b\big)$ solves in the weak sense: namely,
\begin{equation}\label{EQLim}
\begin{cases}
\partial_t r \,+\, \D (r\,u)\, =\, 0 \\
\partial_t u \,+\, \D (u \otimes u)\, +\, \nabla \pi\, +\, \dfrac{1}{2} \nabla \big( |b|^2 \big)\, +\, r\,u^\perp\, =\, \nu(1) \,\Delta u \,+\, \D (b \otimes b) \\
\partial_t b \,+\, \D (u \otimes b \,-\, b \otimes u)\, =\, \mu(1)\, \Delta b \\
\D (u) \,=\, \D(b) \,=\, 0\,,
\end{cases}
\end{equation}
for some pressure function $\pi$ and with initial data $\big(r_0,m_0,b_0\big)$. \\
In addition, if $\big(r_0, u_0, b_0\big) \in H^{1 + \beta} \times H^1 \times H^1$, for some $\beta\in\,]0,1[\,$, then the solution $(r,u,b)$ to system \eqref{EQLim}
is unique. As a consequence, the whole sequence $\big(r_\epsilon, u_\epsilon, b_\epsilon\big)_{\epsilon>0}$ converges.
\end{thm}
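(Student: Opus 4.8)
The plan is to proceed in several stages. First I would establish the uniform bounds: from the energy inequality \eqref{EN} and the conservation $\|\rho_\epsilon(t)\|_{L^p} = \|\rho_{0,\epsilon}\|_{L^p}$, one gets that $(\sqrt{\rho_\epsilon}\,u_\epsilon)_\epsilon$ is bounded in $L^\infty_T(L^2)$, that $(\nabla u_\epsilon)_\epsilon$ and $(\nabla b_\epsilon)_\epsilon$ are bounded in $L^2_T(L^2)$, and that $(b_\epsilon)_\epsilon$ is bounded in $L^\infty_T(L^2)$. Since $\rho_\epsilon = 1 + \epsilon r_\epsilon$ with $r_\epsilon$ transported by the divergence-free field $u_\epsilon$, the transport estimate \eqref{est:rho_L^p} applied to $r_\epsilon$ shows $(r_\epsilon)_\epsilon$ is bounded in $L^\infty_T(L^2 \cap L^\infty)$; in particular $\rho_\epsilon \to 1$ strongly and $u_\epsilon = (\sqrt{\rho_\epsilon})^{-1}(\sqrt{\rho_\epsilon}\,u_\epsilon)$ is bounded in $L^\infty_T(L^2)$ as well (using $\rho_\epsilon$ close to $1$, modulo the low-frequency control coming from \eqref{Extra1}--\eqref{Extra2}). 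These bounds yield, up to extraction, the weak-$*$ limits $(r,u,b)$ in the claimed spaces, with $\nabla u, \nabla b \in L^2_T(L^2)$ and $\div u = \div b = 0$; the initial data of the limit are $(r_0, m_0, b_0)$ by stability of the weak formulation under these convergences.

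The heart of the convergence proof is passing to the limit in the nonlinear terms $\div(\rho_\epsilon u_\epsilon \otimes u_\epsilon)$, $\div(b_\epsilon \otimes b_\epsilon)$, $\div(u_\epsilon \otimes b_\epsilon - b_\epsilon \otimes u_\epsilon)$, $\div(\nu(\rho_\epsilon)\nabla u_\epsilon)$ and $\nabla^\perp(\mu(\rho_\epsilon)\curl b_\epsilon)$, against divergence-free test functions. For the viscosity and resistivity terms, strong convergence $\rho_\epsilon \to 1$ in every $L^q_{\rm loc}$ combined with continuity of $\nu, \mu$ and a.e.\ convergence gives $\nu(\rho_\epsilon) \to \nu(1)$, $\mu(\rho_\epsilon) \to \mu(1)$ strongly, which pairs with the weak $L^2_T(L^2)$ convergence of $\nabla u_\epsilon$, $\curl b_\epsilon$ to produce the constant-coefficient Laplacians. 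The bracketed singular terms $\epsilon^{-1}\nabla\pi_\epsilon + \frac12\nabla|b_\epsilon|^2 + \epsilon^{-1}u_\epsilon^\perp$ are pure gradients and disappear against divergence-free test functions, so no description of the wave system is needed here. For the quadratic convective and magnetic terms, one needs strong compactness of $u_\epsilon$ and $b_\epsilon$ in a suitable local space; I would obtain it via an Aubin--Lions argument, using the uniform bounds above together with a uniform bound on $\partial_t(\rho_\epsilon u_\epsilon)$, $\partial_t b_\epsilon$ in a space of negative regularity read off from the equations themselves (here the $\epsilon^{-1}$ terms are gradients or controlled, so they do not spoil the $H^{-N}_{\rm loc}$ bound after projecting onto divergence-free fields). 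This yields $b_\epsilon \to b$ in $L^2_T(H^s_{\rm loc})$ for $s<1$ and $u_\epsilon \to u$ in $L^2_T(L^2_{\rm loc})$, enough to pass to the limit in all quadratic terms. The main obstacle I anticipate is precisely this compactness for $u_\epsilon$: unlike $b_\epsilon$, the velocity carries the singular Coriolis term, so one must carefully decompose $u_\epsilon$ (or work on $\mathbb{P}(\rho_\epsilon u_\epsilon)$) and argue that, after removing the gradient part, the remaining time-derivative is uniformly bounded in negative Sobolev norms; this is the compensated-compactness / wave-system analysis alluded to in the introduction, carried out in Sections \ref{s:ub}--\ref{s:convergence}.

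Finally, for the uniqueness statement under the extra regularity $(r_0, u_0, b_0) \in H^{1+\beta}\times H^1 \times H^1$, the plan is to show (in Section \ref{s:limit-system}) that system \eqref{EQLim} is well-posed in this class: one propagates the $H^{1+\beta}$ regularity of $r$ by the transport equation (using that $u \in L^1_T({\rm Lip})$, obtained from $u \in L^\infty_T(H^1) \cap L^2_T(H^2)$ via parabolic smoothing in the momentum equation, since the coefficient $\nu(1)$ is constant), and propagates $H^1$ regularity of $u$ and $b$ by standard energy estimates on the coupled MHD-type system, the coupling term $r\,u^\perp$ being of lower order. Uniqueness then follows from a stability estimate on the difference of two solutions in a lower-regularity norm (typically $L^2$ for $u,b$ and $H^\beta$ or $L^2$ for $r$), à la Di Perna--Lions for the transport part combined with Grönwall for the parabolic part; the loss of one derivative in the transport estimate is exactly compensated by the $\beta$-surplus in the regularity of $r_0$. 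Once uniqueness of the limit is known, a standard argument shows that every subsequence of $(r_\epsilon, u_\epsilon, b_\epsilon)_\epsilon$ has a further subsequence converging to the same limit, hence the whole family converges.
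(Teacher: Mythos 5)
Your overall strategy matches the paper's in most respects: uniform bounds from the energy inequality and from transport of $r_\epsilon$, the observation that $\epsilon^{-1}(\nabla\pi_\epsilon+u_\epsilon^\perp)$ is a perfect gradient so that only the bounded remainder $r_\epsilon u_\epsilon^\perp$ survives in the weak formulation, strong convergence of $\nu(\rho_\epsilon),\mu(\rho_\epsilon)$ paired with weak convergence of $\nabla u_\epsilon,\curl b_\epsilon$, an Aubin--Lions argument for $b_\epsilon$, and uniqueness via propagation of regularity for $r$ plus an $L^2$ stability estimate and Gr\"onwall. The one place where you genuinely diverge is the convective term: you propose to extract strong compactness of $u_\epsilon$ itself in $L^2_T(L^2_{\rm loc})$ by Aubin--Lions applied to $\mathbb{P}(\rho_\epsilon u_\epsilon)$, exploiting $\mathbb{P}(u_\epsilon^\perp)=0$. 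The paper never claims strong convergence of $u_\epsilon$; it regularises with $S_j$, writes $\D(u_{\epsilon,j}\otimes u_{\epsilon,j})=\tfrac12\nabla|u_{\epsilon,j}|^2+\omega_{\epsilon,j}u_{\epsilon,j}^\perp$, and gets time-compactness of the regularised vorticity $\eta_{\epsilon,j}=S_j\curl(\rho_\epsilon u_\epsilon)$ from the curl of the momentum equation (non-singular for the same structural reason, $\curl(u_\epsilon^\perp)=\div u_\epsilon=0$), concluding by a strong$\times$weak pairing. Your route is more direct here and yields the stronger conclusion $u_\epsilon\to u$ in $L^2_T(L^2_{\rm loc})$; the paper's vorticity formulation is the one that survives in the fully non-homogeneous regime, where $\rho_0 u^\perp$ is no longer a gradient and no such compactness is available. (On $\mathbb{T}^2$ both arguments tacitly ignore the zero Fourier mode of $u_\epsilon^\perp$, which is not the gradient of a periodic function.)

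Two concrete gaps remain. First, item (1) of the statement, the strong convergence $r_\epsilon\to r$ in $L^2_T(L^2_{\rm loc})$, is not addressed: boundedness in $L^\infty(L^2\cap L^\infty)$ only gives weak-$*$ convergence, and since $r_\epsilon$ is transported by the \emph{varying} fields $u_\epsilon$ the strong convergence is not automatic. The paper obtains it by the Di Perna--Lions renormalisation argument of Subsection \ref{SecStrDen}: $r_\epsilon^2\rightharpoonup g$ with $g$ and $r^2$ both weak solutions of the same well-posed transport problem driven by $u$, hence $g=r^2$, hence convergence of local $L^2$ norms and strong convergence. (The paper also uses this to pass to the limit in $r_\epsilon u_\epsilon^\perp$; with your strong convergence of $u_\epsilon$ you could instead pair strong $u_\epsilon$ with weak-$*$ $r_\epsilon$, but conclusion (1) would still be unproven.) Second, your claim that $u\in L^\infty_T(H^1)\cap L^2_T(H^2)$ gives $u\in L^1_T(\mathrm{Lip})$ is false in dimension $2$, since $H^1(\R^2)\not\hookrightarrow L^\infty$; consequently $H^{1+\beta}$ regularity of $r$ cannot be propagated verbatim. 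The paper propagates only $H^{1+\gamma}$ for every $\gamma<\beta$, using the tame transport estimate with $\nabla u\in L^2_T(H^1)$ (Theorem 3.33 of \cite{BCD}), and this small loss is harmless for the weak--strong stability estimate, where only $\nabla r_1\in L^\infty_T(L^p)$ for some $p>2$ is needed.
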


We will study the limit system \eqref{EQLim} in Section \ref{s:limit-system}; there, we will also specify better in which functional class the uniqueness of solutions holds.

The second case we consider is the case of a \emph{fully non-homogeneous} density, in the sense that the reference density profile $\rho_0$ is non-constant.
Moreover, we need an extra technical assumption on $\rho_0$: we suppose that 
\begin{equation}\label{NDCP}
\forall\,K\subset\Omega\,,\quad K\,\mbox{ compact},\qquad\quad
\text{meas} \left\{x\,\in\,K\;\Bigl|\;\bigl|\nabla\rho_0(x)\bigr|\,\leq\,\delta\right\}\,
\tend_{\delta \rightarrow 0^+}\, 0\,.
\end{equation}
Roughly speaking, we are requiring that, for any fixed compact $K\subset\Omega$, the set of critical points of $\rho_0$ in $K$ is of zero measure.
We remark that this condition is a relaxed version of the condition imposed in \cite{FG} (see also \cite{GSR}, \cite{F_JMFM}), which involves no difficulties in the proof, and has the advantage
of allowing for more general reference densities (for instance, profiles which exponentially decay to some positive constant at $|x|\sim +\infty$).

This case is understandably more difficult, since none of the two previous simplifications can be made.
However, we will see that, by using the structure of system \eqref{MHD1}, we can find an analogous decomposition $\rho_\epsilon\, =\, \rho_0 \,+\, \epsilon\, \sigma_\epsilon$,
where $\big(\sigma_\epsilon\big)_\eps$ is bounded in a low regularity space. Unfortunately, this does not simplify much the singular term, as $\frac{1}{\epsilon}\,\rho_0\,u_\epsilon^\perp$
is not a gradient term.
Another problem is that the bounds we will find on $\big(\sigma_\eps\big)_\eps$ are in such a low regularity space ($H^{-3-\delta}$ in fact) that taking the limit $\epsilon \rightarrow 0^+$ directly in the momentum
equation is impossible. We will need to, instead, work on the vorticity and take the $\curl$ of the momentum equation.

The result in the fully non-homogeneous case is contained in the next statement.
\begin{thm}\label{THNH}
Assume that $\rho_0$ satisfies condition \eqref{NDCP} and (when $\Omega=\R^2$) either \eqref{Extra1} or \eqref{Extra2}.
Consider a sequence $\big(\rho_{0, \epsilon}, m_{0, \epsilon}, b_{0, \epsilon}\big)_{\epsilon > 0}$ of initial data satisfying the assumptions fixed in Section \ref{SecID}, and
let $\big(\rho_\epsilon, u_\epsilon, b_\epsilon\big)_{\epsilon > 0}$ be a sequence of corresponding weak solutions to \eqref{MHD1}.
Finally, let $m_0$, $b_0$ and $r_0$ be as in \eqref{EQcv} and define $\sigma_\epsilon\,:=\,\big(\rho_\epsilon - 1\big)/\eps$.

Then, there exist $\sigma \in L^\infty_{\rm loc}\big(\R_+;H^{-3-\delta}(\Omega)\big)$ for any $\delta > 0$ arbitrarily small, $u \in L^\infty\big(\R_+;L^2(\Omega)\big)$ and
$b \in L^\infty\big(\R_+;L^2(\Omega)\big)$, with $\nabla u$ and $\nabla b$ in $L^2\big(\R_+;L^2(\Omega)\big)$ and $\D(\rho_0 u)\,=\,\div u\,=\,\div b\,=\,0$, such that,
up to the extraction of a subsequence, the following convergence properties hold true: for any $T > 0$, one has
\begin{enumerate}[(1)]
\item $\rho_\epsilon \tend \rho_0\;$ in $\;L^2_T(L^2_{\rm loc})$;
\item $\sigma_\epsilon \stackrel{*}{\rightharpoonup} \s\;$ in $\;L^\infty_T(H^{-3-\delta})$, for all arbitrarily small $\de>0$;
\item $u_\epsilon \stackrel{*}{\rightharpoonup} u\;$ in $\;L^\infty(L^2) \cap L^2_T(H^1)$;
\item $b_\epsilon \stackrel{*}{\rightharpoonup} b\;$ in $\;L^\infty(L^2) \cap L^2_T(H^1)$, and $\;b_\epsilon \tend b$ in  $L^2_T(H^s_{\rm loc})$ for any $ s < 1$.
\end{enumerate}
Moreover, there exists a distribution $\Gamma\in\mc D'\big(\R_+\times\Omega\big)$ of order at most $1$ such that 
\begin{equation*}
\begin{cases}
\partial_t \Big( \curl(\rho_0\,u)\, -\, \sigma \Big)\,-\,\curl\Big(\div\big(\nu(\rho_0)\,\nabla u\big)\Big)\,+\,\curl \Big(\rho_0 \nabla \Gamma\, -\, \D(b \otimes b) \Big)\, =\, 0 \\
\partial_t b \,+\, \D \big(u \otimes b\, -\, b \otimes u\big)\, =\,    \nabla^\perp \big ( \mu(\rho_0)\, \curl (b) \big) \\
\D(\rho_0\, u)\, =\, 0 \\
\D (u)\, =\, \D(b)\, =\, 0\,,
\end{cases}
\end{equation*}
with initial data $\Big(\curl \big(\rho_0\,u\big)\,-\,\s\Big)_{|t=0}\,=\,\curl (m_0)\,-\,r_0\;$ and $\;b_{|t=0}\,=\,b_0$.
\end{thm}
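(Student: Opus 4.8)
The plan is to follow the compensated compactness strategy of \cite{FG}, suitably adapted to the MHD setting. First I would collect the uniform bounds coming from the energy inequality \eqref{EN}: the sequences $\big(\sqrt{\rho_\epsilon}\,u_\epsilon\big)_\epsilon$ and $\big(b_\epsilon\big)_\epsilon$ are bounded in $L^\infty_T(L^2)$, while $\big(\nabla u_\epsilon\big)_\epsilon$ and $\big(\nabla b_\epsilon\big)_\epsilon$ are bounded in $L^2_T(L^2)$; together with $0\leq\rho_\epsilon\leq 2\rho^*$ and the transport structure of the mass equation (plus \eqref{est:rho_L^p}), this yields $\big(\rho_\epsilon u_\epsilon\otimes u_\epsilon\big)_\epsilon$ and $\big(b_\epsilon\otimes b_\epsilon\big)_\epsilon$ bounded in, say, $L^2_T(L^1)$. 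From these bounds I extract weak-$*$ limits $u$, $b$, and (using the conditions \eqref{Extra1}--\eqref{Extra2} for the low-frequency control of $u_\epsilon$ on $\R^2$) identify $m=\rho_0 u$ as the limit of $m_\epsilon=\rho_\epsilon u_\epsilon$; the constraint $\div(\rho_0 u)=0$ comes from passing to the limit in the singular term $\epsilon^{-1}\rho_\epsilon u_\epsilon^\perp$ tested against gradients, exactly as in the fast-rotation literature. The strong convergence $\rho_\epsilon\to\rho_0$ in $L^2_T(L^2_{\rm loc})$ — item (1) — is obtained via the Di Perna--Lions argument already announced in the introduction: writing $\rho_\epsilon=\rho_0+\epsilon\sigma_\epsilon$ and using a compactness/renormalization argument for the transport equation satisfied by $\rho_\epsilon$, together with the fact that $\rho_0$ itself solves the limiting transport equation with divergence-free limit velocity, forces the oscillation to vanish strongly. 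The bound $\big(\sigma_\epsilon\big)_\epsilon\subset L^\infty_T(H^{-3-\delta})$ — item (2) — follows by reading $\sigma_\epsilon=\epsilon^{-1}(\rho_\epsilon-\rho_0)$ off the mass equation: $\partial_t\sigma_\epsilon=-\epsilon^{-1}\div(\rho_\epsilon u_\epsilon-\rho_0 u_\epsilon)+\ldots$, estimated in a negative Sobolev space using the uniform $L^2_T(L^2_{\rm loc})$ bound on the densities and the $L^2$ bound on $u_\epsilon$, integrated in time.

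For the strong convergence of $b_\epsilon$ — item (4) — I would combine the $L^2_T(H^1)$ bound with a uniform bound on $\partial_t b_\epsilon$ in some $L^2_T(H^{-N})$ (obtained directly from the $b$-equation, whose right-hand side $\nabla^\perp(\mu(\rho_\epsilon)\curl b_\epsilon)$ is controlled in $L^2_T(H^{-1})$ since $\mu$ is continuous, $\rho_\epsilon$ is bounded, and $\curl b_\epsilon\in L^2_T(L^2)$) and apply an Aubin--Lions type compactness argument locally in space, interpolating to get $L^2_T(H^s_{\rm loc})$ for every $s<1$. Note there is no singular term in the $b$-equation, so this is clean.

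With all convergences in hand, the last task is to pass to the limit in the weak formulations of the momentum and magnetic-field equations. For the magnetic field this is straightforward: $u_\epsilon\otimes b_\epsilon-b_\epsilon\otimes u_\epsilon\to u\otimes b-b\otimes u$ because $b_\epsilon\to b$ strongly in $L^2_T(H^s_{\rm loc})$ and $u_\epsilon\rightharpoonup u$ in $L^2_T(H^1)$, while $\mu(\rho_\epsilon)\curl b_\epsilon\to\mu(\rho_0)\curl b$ using $\rho_\epsilon\to\rho_0$ a.e. (via item (1)), continuity of $\mu$, the uniform $L^\infty$ bound (dominated convergence), and the weak $L^2_T(L^2)$ convergence of $\curl b_\epsilon$ — this is precisely where a.e. convergence of densities, rather than mere negative-regularity compactness, is essential. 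The momentum equation is the delicate point: because $\sigma_\epsilon$ only lives in $H^{-3-\delta}$, the singular term $\epsilon^{-1}\rho_\epsilon u_\epsilon^\perp$ cannot be handled directly in the momentum formulation, so I apply $\curl$ to the momentum equation, which turns $\epsilon^{-1}\curl(\rho_\epsilon u_\epsilon^\perp)=\epsilon^{-1}(\div(\rho_\epsilon u_\epsilon)+\ldots)$ into something expressible through $\partial_t\sigma_\epsilon$ up to lower-order terms; testing the resulting scalar equation against a scalar test function of compact support, the pressure and $\frac12\nabla|b_\epsilon|^2$ drop out, $\curl(\rho_\epsilon u_\epsilon^\perp)$ combines with the time derivative to produce $\partial_t(\curl(\rho_0 u)-\sigma)$, and the viscosity term $\curl\div(\nu(\rho_\epsilon)\nabla u_\epsilon)$ passes to the limit again via a.e. convergence of $\rho_\epsilon$ plus weak convergence of $\nabla u_\epsilon$. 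The genuinely nonlinear term $\curl\div(\rho_\epsilon u_\epsilon\otimes u_\epsilon-b_\epsilon\otimes b_\epsilon)$ is the main obstacle: here only weak convergence of $u_\epsilon$ is available, so I would follow \cite{FG} and use the structure of the Poincar\'e--Rossby wave system governing the oscillatory part of $(\rho_\epsilon u_\epsilon, \nabla\Pi_\epsilon)$ to extract the algebraic cancellations that let the convective term pass to the limit; the residual quadratic contribution that does not vanish is absorbed into the distribution $\rho_0\nabla\Gamma$, where $\Gamma$ is of order at most $1$ precisely because it arises as a (distributional) pressure-type corrector. Matching the initial data follows by testing with functions not vanishing at $t=0$ and using \eqref{EQcv} together with $\rho_{0,\epsilon}\to\rho_0$.
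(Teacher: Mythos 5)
Your overall strategy coincides with the paper's (uniform bounds, Di Perna--Lions for the densities, Aubin--Lions for $b_\eps$, testing against $\nabla^\perp\psi$ to trade the singular Coriolis term for $\partial_t\sigma_\eps$, and the compensated compactness machinery of \cite{FG} for the convective term). There is, however, one step whose stated justification would fail: the uniform bound $\big(\sigma_\eps\big)_\eps\subset L^\infty_T(H^{-3-\delta})$ of item (2). You propose to read it off the mass equation alone, writing $\partial_t\sigma_\eps=-\eps^{-1}\div\big(\rho_\eps u_\eps-\rho_0u_\eps\big)+\ldots$ The first summand is indeed harmless, since $\rho_\eps u_\eps-\rho_0 u_\eps=\eps\,\sigma_\eps u_\eps$; but the ``$\ldots$'' conceals the term $-\eps^{-1}\div(\rho_0 u_\eps)=-\eps^{-1}\nabla\rho_0\cdot u_\eps$, which carries the full singularity and for which no uniform bound is available a priori (the smallness of $\nabla\rho_0\cdot u_\eps$ is precisely part of what must be \emph{proved}, and only holds in the limit). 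So the mass equation by itself cannot yield any $\eps$-uniform control on $\sigma_\eps$, in any negative Sobolev space.

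The bound genuinely requires the coupling with the momentum equation. Writing the system in the wave form \eqref{Cor1}, taking the $\curl$ of the momentum equation (so that $\curl\big(V_\eps^\perp\big)=\div V_\eps$ reproduces, up to sign, the flux in the mass equation) and subtracting, the singular terms cancel and one is left with $\partial_t\big(\curl(\rho_\eps u_\eps)-\sigma_\eps\big)=\curl f_\eps$, which is uniformly bounded in $L^2_T(H^{-3-\delta})$ because $f_\eps\in L^2_T(H^{-2-\delta})$; since $\curl(\rho_\eps u_\eps)$ is uniformly bounded in $L^\infty_T(H^{-1})$ and the initial data are controlled, the $L^\infty_T(H^{-3-\delta})$ bound on $\sigma_\eps$ follows (this is Proposition \ref{PropSigma} and explains where the exponent $-3-\delta$ comes from). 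This is not a cosmetic point: the interpolation estimates of Proposition \ref{PropSeps} (the $O(\eps^\theta)$ rates in $C^{0,\beta}_T(H^{-k})$), which you implicitly rely on when you replace $\rho_\eps u_\eps\otimes u_\eps$ by $\rho_0u_\eps\otimes u_\eps$ and throughout the compensated compactness argument for the convective term, are all built on this $H^{-3-\delta}$ bound. With that correction, the remainder of your plan matches the paper's proof.
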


\begin{rmk}
The summand $\rho_0 \nabla \Gamma$ can be interpreted as a Lagrange multiplier associated to the constraint $\D(\rho_0 u) = 0$, just as the pressure term $\nabla \pi$ in \eqref{MHD1}
and \eqref{EQLim} can be seen as a Lagrange multiplier for the incompressibility constraint $\D(u) = 0$.
\end{rmk}

\section{Uniform bounds and convergence properties} \label{s:ub}

The next three sections are devoted to the proofs of Theorems \ref{THQH} and \ref{THNH}. In all that follows, $\big(\rho_{0, \epsilon}, m_{0, \epsilon}, b_{0, \epsilon}\big)_{\epsilon}$
is a sequence of initial data satisfying all the assumptions and uniform bounds described in Section \ref{SecID} above, and $\big(\rho_\epsilon, u_\epsilon, b_\epsilon\big)_{\epsilon}$
is an associated sequence of finite energy weak solutions related to those initial data, in the sense of Definition \ref{DEFSol} above.

In this section, we first use uniform bounds on the solutions to prove their weak convergence. Then, we focus on convergence results for the density, which we will need later.

\subsection{Uniform bounds}\label{SecUB}

In this section, we establish uniform bounds (i.e. bounds independent of $\epsilon$) on the sequence of solutions $\big(\rho_\epsilon, u_\epsilon, b_\epsilon\big)_{\epsilon > 0}$,
thus enabling us to extract weakly converging subsequences.

First of all, we notice that the solutions satisfy the energy inequality \eqref{EN} (this is point (viii) of Definition \ref{DEFSol}): for almost every $t>0$ fixed, we have
\begin{equation*}
\int_\Omega \bigg( \rho_\epsilon(t) |u_\epsilon(t)|^2 + |b_\epsilon(t)|^2 \bigg) \dx +  \int_0^t \int_\Omega \bigg( \nu_* |\nabla u_\epsilon|^2 +  \mu |\nabla b_\epsilon|^2 \bigg) \dx \text{d}s
\leq \int_\Omega \left( \frac{|m_{0, \epsilon}|^2}{\rho_{0, \epsilon}} + |b_{0, \epsilon}|^2 \right) \dx\,.
\end{equation*}
In view of our assumptions on the initial data, the right-hand side of the previous inequality is uniformly bounded. Thus we get
\begin{align}
&\big( \sqrt{\rho_\epsilon}\, u_\epsilon \big)_{\epsilon > 0}\,,\;\big(b_\epsilon\big)_{\epsilon > 0}\; \subset\; L^\infty\big(\R_+;L^2(\Omega)\big)\,, \label{ub:u-b} \\
&\qquad\qquad\qquad\qquad\big(\nabla u_\epsilon\big)_{\epsilon > 0}\,,\; \big(\nabla b_\epsilon\big)_{\epsilon > 0}\; \subset\; L^2\big(\R_+;L^2(\Omega)\big)\,. \label{ub:nabla_u-b}
\end{align}

Secondly, because both $\rho_\epsilon$ and (in the quasi-homogeneous case) $r_\epsilon\, :=\,\big(\rho_\epsilon - 1\big)/\eps$ solve a pure transport equation by the divergence-free vector
field $u_\epsilon$ (keep in mind \eqref{eq:r_e} above), we see that, for all $\eps>0$, one has
\begin{align*}
\forall \; 0 \leq \alpha \leq \beta < +\infty\,, \qquad  \text{meas}\big\{ \alpha \leq \rho_\epsilon \leq \beta \big\}\, =\, \text{meas}\big\{ \alpha \leq \rho_{0, \epsilon} \leq \beta \big\}\,,
\end{align*}
and the same holds for $r_\eps$ (this is the same property as in Theorem 2.1, Chapter 2, p. 23 of \cite{L}).
On the one hand, this of course implies that $\big(\rho_\epsilon\big)_{\epsilon > 0} \subset L^\infty(L^\infty)$ and $\big(r_\epsilon\big)_{\epsilon > 0} \subset L^\infty(L^2 \cap L^\infty)$,
together with the bounds
\begin{equation} \label{ub:rho_e}
0\,\leq\,\rho_\eps\,\leq\,2\,\rho^*\qquad\qquad\mbox{ almost everywhere in }\quad \R_+\times\Omega\,,
\end{equation}
in view of the assumptions on the initial datum $\rho_{0,\eps}$.
Therefore, up to extracting a subsequence, we gather the convergences
\begin{equation}\label{conv:weak-rho-r}
\rho_\epsilon \wtend^* \rho \quad \text{ in }\; L^\infty (L^\infty)\qquad\qquad \mbox{ and }\qquad\qquad  r_\epsilon \wtend^* r \quad \text{ in }\; L^\infty (L^2 \cap L^\infty)\,,
\end{equation}
for some $\rho\in L^\infty(L^\infty)$ and $r\in L^\infty(L^2\cap L^\infty)$.
On the other hand, the same property also shows that, for almost every $t>0$, the density $\rho_\epsilon(t)$ satisfies the extra regularity properties 
which we had required if $\Omega = \mathbb{R}^2$, and it does so independently of $t>0$ and uniformly with respect to $\epsilon>0$:
\begin{align}
\left( \frac{1}{\rho_{\epsilon}(t)}\,\mds{1}_{\left\{\rho_{\epsilon}(t) < \delta\right\}} \right)_{\!\!\epsilon > 0}\, \subset\, L^1(\Omega)\qquad \mbox{ or }\qquad 
\left(\big(\oline{\rho}\,-\,\rho_{\epsilon}(t)\big)^+ \right)_{\!\!\epsilon > 0}\,\subset\,L^{p_0}(\Omega)\,, \label{EQEx1}
\end{align}
where $\de>0$, $p_0\in\,]1,+\infty[\,$ and $\oline\rho>0$ are the same as in conditions \eqref{Extra1} and \eqref{Extra2}.
%
%

Finally, we also see that $\big(u_\epsilon\big)_{\epsilon > 0}$ is in fact uniformly bounded in $L^2_T(L^2)$ for any finite time $T > 0$.
Indeed, if $\Omega = \mathbb{R}^2$, this is a consequence of either one of the two previous conditions in \eqref{EQEx1} (see also \cite{L}, point 8 in Remark 2.1 pp. 24-25).
If $\Omega = \mathbb{T}^2$ instead, the same can be shown without the extra assumptions, by means of the Poincaré-Wirtinger inequality (see again \cite{L}, Subsection 2.3 p. 37).
Therefore, up to an extraction, we deduce that
\begin{equation} \label{conv:u-b}
u_\epsilon \wtend u \quad \text{ in }\; L^2_{\rm loc}\big(\R_+;H^1(\Omega)\big)\qquad\mbox{ and }\qquad b_\epsilon \wtend b \quad \text{ in }\; L^2_{\rm loc}\big(\R_+;H^1(\Omega)\big)\,,
\end{equation}
for suitable functions $u$ and $b$ belonging to $L^2_{\rm loc}\big(\R_+;H^1(\Omega)\big)$.

Remark that, in fact, we have a more precise convergence property for the magnetic fields: in view of \eqref{ub:u-b}-\eqref{ub:nabla_u-b}, we know that
$b\in L^\infty\big(\R_+;L^2(\Omega)\big)$, with $\nabla b\in L^2\big(\R_+;L^2(\Omega)\big)$ and, up to an extraction, we have
the convergences  $b_\eps\,\stackrel{*}{\rightharpoonup}\,b$  in $L^\infty(L^2)$ and 
$\nabla b_\eps\,\rightharpoonup\,\nabla b$ in $L^2(L^2)$.
We will resort to those precise features when needed.

\subsection{Strong convergence of the densities}\label{SecStrDen}

This section is dedicated to the quest of pointwise convergence for the $\rho_\epsilon$. This will be useful for two reasons.
Firstly, strong convergence makes the proofs simpler: on many occasions, the use of paradifferential calculus in \cite{FG} can be replaced by more elementary arguments.
Secondly, pointwise convergence is necessary to deal with the viscosity and resistivity terms: since we only have weak convergence of the velocity fields, strong convergence 
of both $\nu(\rho_\epsilon)$ and $\mu(\rho_\epsilon)$ is required to achieve convergence of the product terms $\D\big(\nu(\rho_\epsilon)\nabla u_\epsilon\big)$ and
$\nabla^\perp\big(\mu(\rho_\epsilon)\curl u_\epsilon\big)$.

However, the uniform bounds alone are insufficient to prove the strong convergence we seek. So far, we have only obtained mere weak convergence
$\rho_\epsilon \stackrel{*}{\rightharpoonup} \rho$ in $L^\infty(L^\infty)$ (recall uniform bound \eqref{conv:weak-rho-r} above). We now resort to the arguments of Di Perna and P.-L. Lions \cite{DL}:
if somehow we proved that 
\begin{equation}\label{EQ1}
\rho_\epsilon^2\, \wtend^*\, \rho^2 \qquad\qquad \text{ in }\quad L^\infty\big(\R_+;L^\infty(\Omega)\big)\, ,
\end{equation}
then, by using the characteristic function $\mds{1}_K$ of a compact subset $K \subset\Omega$ as a test function, we would recover convergence of the $L^2$ norms.
Using the euclidean structure of $L^2_T (L^2(K))$, we would then deduce local strong convergence, hence pointwise convergence, after extraction.

Therefore, the argument boils down to proving \eqref{EQ1}. The quadratic non-linearity is the main challenge as, by the uniform bounds $\big(\rho_\epsilon\big)_{\epsilon > 0} \subset L^\infty(L^\infty)$,
we only know that there exists some function $g \in L^\infty(L^\infty)$ such that $\rho_\epsilon^2\, \wtend^* \,g$ in $L^\infty(L^\infty)$,
and this function $g$ need not be $\rho^2$. The trick is that both $g$ and $\rho^2$ are (weak) solutions of the transport PDE
\begin{equation}\label{EQ4}
\begin{cases}
\partial_t a + u \cdot \nabla a = 0\\[1ex]
a_{|t = 0}= a_0\,,
\end{cases}
\end{equation}
with same initial datum and divergence-free velocity field. After the work \cite{DL}, problem \eqref{EQ4} is well-posed, so $g$ and $\rho^2$ must be equal.
To sum up, we get the following statement.

\begin{prop}\label{PropStrCnv}
The convergence property \eqref{EQ1} holds true. In particular, in the limit $\veps\ra0^+$, we have the strong convergence
\begin{equation*}
\rho_\epsilon\,\longrightarrow\, \rho\qquad\qquad \text{ in }\qquad L^2_{\rm loc}(\mathbb{R_+} \times \Omega)\,,
\end{equation*}
and, up to the extraction of a suitable subsequence, the convergence holds also almost everywhere in $\mathbb{R}_+ \times \Omega$.
\end{prop}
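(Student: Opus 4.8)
The plan is to follow the strategy outlined just before the statement: reduce the strong convergence claim to the weak convergence of squares \eqref{EQ1}, and prove the latter by a DiPerna--Lions uniqueness argument for the transport equation.

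\medbreak
\noindent\textbf{Step 1: Passing from \eqref{EQ1} to strong convergence.}
Suppose \eqref{EQ1} is established. Fix a compact set $K\subset\Omega$ and a time $T>0$. Testing \eqref{EQ1} against $\mathds{1}_{[0,T]}(t)\,\mathds{1}_K(x)\in L^1(\R_+\times\Omega)$ (which is a legitimate dual element since $\rho_\eps^2$ is bounded in $L^\infty(L^\infty)$ and $\mathds{1}_{[0,T]\times K}$ has finite measure), we get
$$
\int_0^T\!\!\int_K \rho_\eps^2\,\dx\dt\;\tend\;\int_0^T\!\!\int_K \rho^2\,\dx\dt\,,
$$
that is, $\|\rho_\eps\|_{L^2_T(L^2(K))}\to\|\rho\|_{L^2_T(L^2(K))}$. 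On the other hand, the uniform bound \eqref{ub:rho_e} gives, along a subsequence, $\rho_\eps\wtend\rho$ weakly in the Hilbert space $L^2_T(L^2(K))$ (consistent with \eqref{conv:weak-rho-r}). In a Hilbert space, weak convergence together with convergence of norms implies strong convergence: indeed $\|\rho_\eps-\rho\|^2=\|\rho_\eps\|^2-2\langle\rho_\eps,\rho\rangle+\|\rho\|^2\to 0$. Since $K$ and $T$ are arbitrary, $\rho_\eps\to\rho$ in $L^2_{\rm loc}(\R_+\times\Omega)$. Finally, strong $L^2_{\rm loc}$ convergence yields a subsequence converging almost everywhere in $\R_+\times\Omega$, by a standard diagonal extraction over an exhaustion of $\R_+\times\Omega$ by compacts.

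\medbreak
\noindent\textbf{Step 2: Proving \eqref{EQ1} via DiPerna--Lions uniqueness.}
By \eqref{ub:rho_e}, the sequence $(\rho_\eps^2)_{\eps>0}$ is bounded in $L^\infty(\R_+;L^\infty(\Omega))$, so up to extraction $\rho_\eps^2\wtend^* g$ in $L^\infty(L^\infty)$ for some $g$ with $0\le g\le 4(\rho^*)^2$. It remains to identify $g=\rho^2$. First, since $\rho_\eps$ is transported by the divergence-free field $u_\eps$, the renormalization property (valid here because $\rho_\eps\in L^\infty$ and $u_\eps\in L^2_{\rm loc}(H^1)$ with $\div u_\eps=0$) shows that $\rho_\eps^2$ solves, in the weak sense,
$$
\partial_t(\rho_\eps^2)+\div(\rho_\eps^2\,u_\eps)=0\,,\qquad (\rho_\eps^2)_{|t=0}=\rho_{0,\eps}^2\,.
$$
Passing to the limit in this linear equation: the product $\rho_\eps^2\,u_\eps$ converges to $g\,u$ in $\mathcal D'$ because $\rho_\eps^2\wtend^* g$ in $L^\infty(L^\infty)$ and $u_\eps\to u$ strongly in $L^2_{\rm loc}$ — here one needs the strong convergence of $u_\eps$, which is \emph{not} yet available, so instead I would argue as the authors do using that $\rho_\eps^2 u_\eps$ is bounded and extracting its weak limit, then identifying it against test functions; more carefully, one uses that $\rho_\eps u_\eps$ is already controlled and that the product structure together with the transport equation for $\rho_\eps$ itself passes to the limit. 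In any case, $g$ is a weak solution of
$$
\partial_t g+\div(g\,u)=0\,,\qquad g_{|t=0}=\rho_0^2\,,
$$
where we used $\rho_{0,\eps}^2\to\rho_0^2$ (a consequence of $\rho_{0,\eps}\to\rho_0$ strongly in $L^2\cap L^\infty$ — in the quasi-homogeneous case $\rho_0=1$, and in general from the assumptions of Section~\ref{SecID}). Likewise, $\rho$ itself solves $\partial_t\rho+\div(\rho u)=0$ with $\rho_{|t=0}=\rho_0$, and by the same renormalization applied to the limit equation (Theorem~II.3 of \cite{DL}, using $u\in L^2_{\rm loc}(H^1)$, $\div u=0$, $\rho\in L^\infty$), $\rho^2$ also solves $\partial_t(\rho^2)+\div(\rho^2 u)=0$ with $(\rho^2)_{|t=0}=\rho_0^2$. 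Thus $g$ and $\rho^2$ are two bounded weak solutions of the same linear transport Cauchy problem \eqref{EQ4} with bounded divergence-free Sobolev velocity field; by the uniqueness part of DiPerna--Lions theory \cite{DL}, they coincide, $g=\rho^2$, which is \eqref{EQ1}.

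\medbreak
\noindent\textbf{Main obstacle.}
The delicate point is Step~2, specifically justifying that the \emph{nonlinear} flux $\rho_\eps^2\,u_\eps$ converges (in $\mathcal D'$) to $\rho^2\,u$ — equivalently to $g\,u$ once $g=\rho^2$ is known, which is circular. The clean way around is: one does \emph{not} need to pass to the limit in $\rho_\eps^2 u_\eps$ directly. Instead, extract a weak limit $h=\lim \rho_\eps^2 u_\eps$ in $\mathcal D'$ and observe $\partial_t g+\div h=0$. To identify $h$, note that the available \emph{strong} compactness of $\rho_\eps$ would make things trivial, but that is what we are proving; so the correct logical order (as in \cite{DL} and \cite{FG}) is to use that $\rho_\eps\to\rho$ a.e. is \emph{not} needed here — only the \emph{stability} of renormalized solutions under weak-$*$ convergence of $\rho_\eps$ combined with the equi-integrability coming from the $L^\infty$ bound. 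Concretely, the DiPerna--Lions stability theorem states that if $\rho_\eps\wtend^*\rho$, $u_\eps\to u$ appropriately, and each $\rho_\eps$ is a renormalized solution, then $\rho$ is a renormalized solution \emph{and} $\beta(\rho_\eps)\wtend^*\beta(\rho)$ for all reasonable $\beta$; applying this with $\beta(s)=s^2$ gives \eqref{EQ1} directly. The only input on the velocities one genuinely needs is $u_\eps\rightharpoonup u$ weakly in $L^2_{\rm loc}(H^1)$ together with the uniform bound — and the divergence-free constraint removes the need for strong velocity convergence. I would present the argument in exactly this form, citing \cite{DL} for the stability statement, so that the apparent circularity dissolves.
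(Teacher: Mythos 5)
Your overall architecture is the right one and matches the paper's: show that each $\rho_\eps^2$ is a renormalized (weak) solution of the transport equation, identify the weak-$*$ limit $g$ of $\rho_\eps^2$ as a bounded solution of \eqref{EQ4} with datum $\rho_0^2$, invoke DiPerna--Lions uniqueness to get $g=\rho^2$, and then upgrade to strong convergence via the Hilbert-space identity (norm convergence plus weak convergence). Step 1 and the renormalization part of Step 2 are fine. But the resolution you propose for the one genuinely delicate point --- passing to the limit in the flux $\rho_\eps^2\,u_\eps$ --- does not work. There is no DiPerna--Lions stability theorem that concludes $\beta(\rho_\eps)\wtend^*\beta(\rho)$ from \emph{weak} convergence of the velocity fields: the stability results of \cite{DL} (Theorem II.4 and its variants) require strong convergence $u_\eps\to u$ in $L^1_{\rm loc}$, precisely because the product $\rho_\eps u_\eps$ of two weakly convergent sequences need not converge to the product of the limits. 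The divergence-free constraint does not remove this obstruction; if it did, the entire compensated-compactness machinery of Sections \ref{SecQH}--\ref{ss:fully-nh} would be unnecessary. As written, your Step 2 is circular (you essentially assume the conclusion when identifying the limit of the flux), and the escape route you cite is not available.

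The paper's actual argument closes this gap differently, by extracting \emph{strong} compactness of $\rho_\eps^2$ in a negative-regularity space rather than of $u_\eps$. From $\partial_t(\rho_\eps^2)=-\div(\rho_\eps^2 u_\eps)$ and the uniform bound $|\rho_\eps^2 u_\eps|\leq(\rho^*)^{3/2}|\sqrt{\rho_\eps}\,u_\eps|$ one gets $\big(\rho_\eps^2\big)_\eps\subset W^{1,\infty}_T(H^{-1}_{\rm loc})$; interpolation with the $L^\infty$ bound gives equicontinuity in $C^{0,\theta}_T(H^{-\theta}_{\rm loc})$, and Ascoli--Arzel\`a yields $\rho_\eps^2\to g$ strongly in that space. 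The product map being continuous from $H^{-\theta}\times H^1$ to $H^{-\theta-\delta}$ (Lemma \ref{Para}), this strong convergence of one factor compensates the mere weak convergence $u_\eps\rightharpoonup u$ in $L^2_T(H^1)$ of the other, giving $\rho_\eps^2 u_\eps\rightharpoonup g\,u$ in $\mc D'$. You should replace your appeal to ``stability'' by this argument. A secondary omission: to apply the uniqueness theorem of \cite{DL} on $\Omega=\R^2$ one must also verify the growth condition $u/(1+|x|)\in L^1_T(L^1)+L^1_T(L^\infty)$, which the paper checks via a Chebyshev splitting of $u$; your proposal skips this hypothesis entirely.
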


Before proving the previous proposition, some preliminary lemmas are in order.
First of all, we establish that all the $\rho_\epsilon^2$ are solutions to the continuity equation. Since this fact is to be shown for all $\epsilon > 0$ independently,
we drop the $\epsilon$ indices for more clarity.

\begin{lemma}\label{LemmaR2}
Let $\rho_0\in L^\infty$ and $u \in L^2_{\rm loc}\big(\R_+;H^1\big)$ be a divergence-free vector field. Let $\rho \in L^\infty(L^\infty)$ be a weak solution to the Cauchy problem
\eqref{EQ4}, with initial datum $\rho_0$.

Then $\rho^2$ is also a weak solution of the same equation, related to the initial datum $\rho_0^2$. 
\end{lemma}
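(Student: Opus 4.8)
The plan is to use the renormalization technique of DiPerna and Lions \cite{DL}, based on regularization in the space variable combined with a commutator estimate. Fix $T>0$, let $(\omega_\delta)_{\delta>0}$ be a standard family of mollifiers on $\Omega$, and set $\rho_\delta:=\rho*\omega_\delta$. Since $\div u=0$, the weak formulation of \eqref{EQ4} is equivalent to $\partial_t\rho+\div(\rho\,u)=0$ in $\mc D'(\,]0,T[\,\times\Omega)$ (with initial datum $\rho_0$), where the flux $\rho\,u$ belongs to $L^2_{\rm loc}$ by Cauchy--Schwarz, using $\rho\in L^\infty$ and $u\in L^2_{\rm loc}(\R_+;H^1)\subset L^2_{\rm loc}(\R_+;L^2)$. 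Convolving this identity with $\omega_\delta$ and isolating $\div(u\,\rho_\delta)$ gives the regularized transport equation
\begin{equation*}
\partial_t\rho_\delta\,+\,\div\big(u\,\rho_\delta\big)\,=\,r_\delta\,,\qquad\qquad r_\delta\,:=\,\div\big(u\,\rho_\delta\big)\,-\,\div\big((u\,\rho)*\omega_\delta\big)\,.
\end{equation*}

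The heart of the matter is the DiPerna--Lions commutator lemma: since $u\in L^2_{\rm loc}(\R_+;H^1)\subset L^1_{\rm loc}(\R_+;W^{1,1}_{\rm loc})$ and $\rho\in L^\infty$, it yields $r_\delta\tend 0$ in $L^1_{\rm loc}(\R_+\times\Omega)$ as $\delta\ra0^+$. Meanwhile $\rho_\delta$ is smooth in $x$, and reading off the regularized equation one sees $\partial_t\rho_\delta\in L^2_{\rm loc}\subset L^1_{\rm loc}$; hence the chain rule is licit in both variables. Multiplying the regularized equation by $2\rho_\delta$ and using once more $\div u=0$ then produces
\begin{equation*}
\partial_t\big(\rho_\delta^2\big)\,+\,\div\big(u\,\rho_\delta^2\big)\,=\,2\,\rho_\delta\,r_\delta\qquad\qquad\text{in }\;\mc D'(\,]0,T[\,\times\Omega)\,,
\end{equation*}
with $\rho_\delta^2$ attaining the value $(\rho_0*\omega_\delta)^2$ at time $t=0$.

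It then remains to let $\delta\ra0^+$. Up to extraction, $\rho_\delta\tend\rho$ in $L^p_{\rm loc}$ for every finite $p$ and almost everywhere, hence $\rho_\delta^2\tend\rho^2$ in $L^p_{\rm loc}$ and $(\rho_0*\omega_\delta)^2\tend\rho_0^2$ in $L^1_{\rm loc}$; moreover $\|\rho_\delta\|_{L^\infty}\leq\|\rho\|_{L^\infty}$ combined with $r_\delta\tend0$ in $L^1_{\rm loc}$ gives $\rho_\delta\,r_\delta\tend0$ in $L^1_{\rm loc}$. Testing the regularized identity against an arbitrary $\psi\in\mc D([0,T[\,\times\Omega)$, an integration by parts gives
\begin{equation*}
\int_0^T\!\!\int_\Omega\Big(\rho_\delta^2\,\partial_t\psi\,+\,\rho_\delta^2\,u\cdot\nabla\psi\Big)\dx\dt\,=\,-\int_\Omega\big(\rho_0*\omega_\delta\big)^2\,\psi_{|t=0}\dx\,-\,\int_0^T\!\!\int_\Omega 2\,\rho_\delta\,r_\delta\,\psi\dx\dt\,,
\end{equation*}
and I would pass to the limit term by term (dominated convergence, since $u\in L^2_{\rm loc}$ and $\psi$ is compactly supported), obtaining precisely the weak formulation of \eqref{EQ4} for the pair $(\rho^2,\rho_0^2)$. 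The only genuinely delicate step is the commutator bound $r_\delta\tend0$ in $L^1_{\rm loc}$: this is the technical core of DiPerna--Lions theory and is exactly what allows the argument to go through despite the low regularity available on $u$ (only $H^1$ in space and $L^2_{\rm loc}$ in time).
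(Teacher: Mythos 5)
Your proposal is correct and follows essentially the same route as the paper: mollify $\rho$ in space, invoke the DiPerna--Lions commutator lemma (Lemma II.1 of \cite{DL}) to kill the remainder, multiply the regularized equation by $2\rho_\delta$, and pass to the limit in the weak formulation. The only cosmetic difference is that the paper states the commutator convergence in $L^1_T(L^2_{\rm loc})$ rather than $L^1_{\rm loc}$, which changes nothing in the argument.
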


\begin{proof}[Proof of Lemma \ref{LemmaR2}]
We wish to prove that $\rho^2$ is a weak solution of \eqref{EQ4}, with initial datum $\rho_0^2$: this means that, for all $T>0$ and all $\psi\in\mathcal{D}([0, T[ \times\Omega)$, one has
\begin{equation*}
\int_0^T \int_\Omega \rho^2 \left( \partial_t \psi + u \cdot \nabla \psi \right) \text{d}x \text{d}t + \int_\Omega \rho_0^2 \,\psi_{|t = 0} \text{d}x\, =\, 0\,.
\end{equation*}

We consider a smoothing kernel $\big(\mu_\alpha\big)_{\alpha>0}$ such that, if $\Omega = \mathbb{R}^2$, we have $\mu_\alpha(x) = \mu\big(x/\alpha\big)/\alpha^d$, where
$\mu \in C^\infty(\mathbb{R}^2)$ is such that $\Supp\mu \subset B(0, 1)$ and  $\mu(x) = \mu(-x)$.
For $\Omega = \mathbb{T}^2$, instead, we set $\mu'_\alpha(x) = \sum_{k \in \mathbb{Z}^2} \mu_\alpha(x + k)$.

For all $\alpha>0$, we define $\rho_\alpha = \mu_\alpha * \rho$ (use $\mu_\alpha'$ instead of $\mu_\alpha$ if $\Omega=\T^2$). Then, using also
the divergence-free condition for $u$ and the evenness of $\mu_\alpha$, we deduce that $\rho_\alpha$ solves (in the weak sense) the following approximate equation: 
\begin{equation*}
\partial_t \rho_\alpha\, +\, u \cdot \nabla \rho_\alpha \,=\,\big[ u\cdot\nabla\,,\, \mu_\alpha *\big] \rho\,,\qquad\big(\rho_\alpha\big)_{|t=0}\,=\,\mu_\alpha\,*\,\rho_{0}\,,
\end{equation*}
where we have denoted $[ u\cdot\nabla\,,\, \mu_\alpha *]$ the commutator between $u\cdot\nabla$ and the convolution by $\mu_\alpha$.
Multiplying this equation by $2\rho_\alpha$ shows that
\begin{equation}\label{EQ3}
\partial_t \left( \rho_\alpha^2 \right) + u \cdot \nabla \left( \rho_\alpha^2 \right) = 2\rho_\alpha\,\big[ u\cdot\nabla\,,\, \mu_\alpha *\big] \rho\,,
\qquad\big(\rho_\alpha^2\big)_{|t=0}\,=\,\left(\mu_\alpha\,*\,\rho_{0}\right)^2\,.
\end{equation}
The space differentiation $2\rho_\alpha \nabla \rho_\alpha = \nabla (\rho_\alpha^2)$ is justified because $\rho_\alpha(t) \in C^\infty$ for almost all times $0 \leq t \leq T$ (for any fixed $T>0$),
and the time differentiation $2 \rho_\alpha \partial_t \rho_\alpha = \partial_t (\rho_\alpha^2)$ is justified because $\rho_\alpha \in W^{1, 2}_T(H^s_{\rm loc})$ for every $\alpha>0$ and $s\geq0$.
This comes from the property $\rho u \in L^2_T(L^2)$ and the relation $\partial_t \rho_\alpha\, =\, -\, \D \big(   \mu_\alpha * ( \rho\, u) \big)$
(which follows from \eqref{EQ4} and $\div u=0$), which implies $\d_t\rho_\alpha\in L^2_T(H^s_{\rm loc})$.

Our next goal is to take the limit $\alpha\ra0^+$ in the weak formulation of \eqref{EQ3}, namely in the relation
\begin{equation}\label{EQ9}
\int_0^T\!\!\!\int_\Omega \rho_\alpha^2 \bigg\{  \partial_t \psi + u\cdot \nabla \psi \bigg\}\dx\dt + \int_\Omega \big(\mu_\alpha * \rho_0\big)^2 \psi_{|t = 0}\,\dx +  
\int_0^T\!\!\!\int_\Omega 2 \psi \rho_\alpha\,\big[ u\cdot\nabla, \mu_\alpha * \big]\rho\,\dx\dt \, = 0\,,
\end{equation}
for any arbitrary test function $\psi \in \mathcal{D}\big([0, T[\, \times\Omega\big)$.
On the one hand, we remark that, since $u \in L^2_T(H^1)$ and $\rho \in L^\infty(L^\infty)$, we can apply Lemma II.1 in \cite{DL} to get
\begin{equation*}
\big[u \cdot \nabla\,,\, \mu_\alpha *\big]\rho\, \tend_{\alpha \rightarrow 0^+}\, 0 \qquad\qquad \text{ in }\quad L^1_T(L^2_{\rm loc})\,.
\end{equation*}
Using this, we find that the commutator term in \eqref{EQ3} cancels in the limit $\alpha \rightarrow 0^+$, as, for every compact $K \subset \Omega$, we have
$$ 
\big\|   2\rho_\alpha \left[ u\cdot \nabla, \mu_\alpha * \right] \rho   \big\|_{L^1_T(L^2(K))} \leq
2 \| \rho \|_{L^\infty_{t, x}} \,\big\|\left[ u\cdot \nabla, \mu_\alpha * \right] \rho  \big\|_{L^1_T(L^2(K))} \tend_{\alpha \rightarrow 0^+} 0\,.
$$ 
On the other hand, since $\rho \in L^\infty(\R_+\times\Omega)$, by standard properties of mollification kernels we gather the strong convergence
$\rho_\alpha \tend \rho$ in e.g. $L^2_T(L^2_{\rm loc})$ when $\alpha\ra0^+$, for any fixed $T>0$. By the same token, we also have $\mu_\alpha * \rho_0\tend\rho_0$
in $L^2_{\rm loc}(\Omega)$, in the limit $\alpha\ra0^+$. Thanks to those properties, it is easy to take the limit in the first and second term in \eqref{EQ9}.

The proof of the lemma is now completed.
\end{proof}

We also need the following result.
\begin{lemma} \label{l:g}
Let $g\in L^\infty(\R_+\times\Omega)$ be any weak-$*$ limit point of the sequence $\big(\rho_\eps^2\big)_{\eps > 0}$ with respect to the $L^\infty(\R_+\times\Omega)$ topology.
Let $\rho_0$ be the limit density profile and $u$ the limit velocity field identified in \eqref{conv:u-b}.

Then $g$ is a solution of the linear transport equation 
\begin{equation} \label{EQ11}
\d_tg\,+\,u\cdot\nabla g\,=\,0\,,\qquad\qquad\mbox{ with }\qquad g_{|t=0}\,=\,\rho_0^2\,.
\end{equation}
\end{lemma}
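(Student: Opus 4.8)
Here is how I would prove Lemma \ref{l:g}.

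The plan is to pass to the limit $\eps\ra0^+$ in the weak formulation of the transport equation satisfied by $\rho_\eps^2$. By Lemma \ref{LemmaR2}, for every fixed $\eps>0$ the function $\rho_\eps^2$ is a weak solution of $\d_t\big(\rho_\eps^2\big)+\div\big(\rho_\eps^2\,u_\eps\big)=0$ with initial datum $\rho_{0,\eps}^2$; equivalently, for any $T>0$ and any $\psi\in\mathcal{D}\big([0,T[\,\times\Omega\big)$,
\begin{equation*}
\int_0^T\!\!\!\int_\Omega \rho_\eps^2\,\big(\d_t\psi+u_\eps\cdot\nabla\psi\big)\,\dx\,\dt\,=\,-\int_\Omega \rho_{0,\eps}^2\,\psi_{|t=0}\,\dx\,.
\end{equation*}
First, the terms which are linear in $\rho_\eps^2$ pose no problem: $\int_0^T\!\int_\Omega\rho_\eps^2\,\d_t\psi\to\int_0^T\!\int_\Omega g\,\d_t\psi$ simply because $g$ is a weak-$*$ limit point of $\big(\rho_\eps^2\big)_\eps$ in $L^\infty(\R_+\times\Omega)$. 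For the initial datum I would write $\rho_{0,\eps}^2-\rho_0^2=\eps\,r_{0,\eps}\,\big(\rho_{0,\eps}+\rho_0\big)$ and observe that this tends to $0$ in $L^2$, since $\big(r_{0,\eps}\big)_\eps$ is bounded in $L^2$ and $\big(\rho_{0,\eps}+\rho_0\big)_\eps$ in $L^\infty$; as $\psi_{|t=0}$ is compactly supported, $\int_\Omega\rho_{0,\eps}^2\,\psi_{|t=0}\to\int_\Omega\rho_0^2\,\psi_{|t=0}$.

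The crux is the quadratic term $\int_0^T\!\int_\Omega\rho_\eps^2\,u_\eps\cdot\nabla\psi$, in which both factors converge only weakly ($\rho_\eps^2\wtend^*g$ in $L^\infty$ and, by \eqref{conv:u-b}, $u_\eps\wtend u$ in $L^2_{\rm loc}(\R_+;H^1)$), so that passing to the limit requires compactness on at least one of them. The idea is to extract it from the transport structure itself: since $\big(\rho_\eps\big)_\eps\subset L^\infty(L^\infty)$ and $\big(u_\eps\big)_\eps\subset L^2_{\rm loc}(L^2)$ by the uniform bounds of Subsection \ref{SecUB}, the flux $\rho_\eps^2\,u_\eps$ is bounded in $L^2_{\rm loc}(\R_+\times\Omega)$, hence $\d_t\big(\rho_\eps^2\big)=-\div\big(\rho_\eps^2\,u_\eps\big)$ is bounded in $L^2_{\rm loc}\big(\R_+;H^{-1}_{\rm loc}\big)$. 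Combining this with the bound on $\big(\rho_\eps^2\big)_\eps$ in $L^\infty_{\rm loc}\big(\R_+;L^2_{\rm loc}\big)$, the Aubin--Lions--Simon lemma — applied on each compact $K\subset\Omega$ through the chain $L^2(K)\hookrightarrow\hookrightarrow H^{-s}(K)\hookrightarrow H^{-1}(K)$, $0<s<1$ — yields, up to a further extraction, $\rho_\eps^2\tend g$ strongly in $L^2_{\rm loc}\big(\R_+;H^{-s}_{\rm loc}\big)$ for every $s\in\,]0,1[\,$.

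With this in hand I would close the argument by a strong--weak duality: $u_\eps\cdot\nabla\psi$ is bounded in $L^2_{\rm loc}(\R_+;H^1)$ with a fixed compact support in $x$ and converges weakly to $u\cdot\nabla\psi$, hence weakly in $L^2_{\rm loc}(\R_+;H^s)$ for $s<1$; pairing it with the strongly convergent sequence $\big(\rho_\eps^2\big)_\eps$ in the dual space $L^2_{\rm loc}\big(\R_+;H^{-s}_{\rm loc}\big)$ gives $\int_0^T\!\int_\Omega\rho_\eps^2\,u_\eps\cdot\nabla\psi\to\int_0^T\!\int_\Omega g\,u\cdot\nabla\psi$. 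Letting $\eps\ra0^+$ in the weak formulation above then produces, for all $T>0$ and all $\psi\in\mathcal{D}\big([0,T[\,\times\Omega\big)$,
\begin{equation*}
\int_0^T\!\!\!\int_\Omega g\,\big(\d_t\psi+u\cdot\nabla\psi\big)\,\dx\,\dt\,=\,-\int_\Omega \rho_0^2\,\psi_{|t=0}\,\dx\,,
\end{equation*}
which is exactly the weak formulation of \eqref{EQ11} (recall $\div u=0$). The only genuine difficulty is the compactness step for $\big(\rho_\eps^2\big)_\eps$; everything else is linear or reduces to routine mollification and interpolation estimates.
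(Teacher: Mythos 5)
Your proof is correct and follows essentially the same route as the paper: the key step in both is to use the relation $\d_t(\rho_\eps^2)=-\div(\rho_\eps^2 u_\eps)$ to trade the transport structure for time compactness, upgrade to strong convergence of $\rho_\eps^2$ in $L^2_T(H^{-s}_{\rm loc})$, and then pass to the limit in the product against the weakly convergent velocities. The only (cosmetic) differences are that you invoke Aubin--Lions--Simon where the paper uses Sobolev interpolation plus Ascoli--Arzel\`a, and you conclude by an $H^{-s}\times H^s$ duality pairing where the paper appeals to the product-continuity Lemma \ref{Para}.
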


\begin{proof}[Proof of Lemma \ref{l:g}]
By Lemma \ref{LemmaR2} above, we know that the $\rho_\eps^2$ solve the transport equation
\begin{equation} \label{EQ10}
\partial_t \left( \rho_\epsilon^2 \right) + u_\eps \cdot \nabla \left( \rho_\epsilon^2 \right) = 0\,, 
\end{equation}
related to the initial datum $\big(\rho_\eps^2\big)_{|t=0}\,=\,\rho_{0,\eps}^2$. Therefore, for proving our claim, it is enough to take the limit in the weak formulation
of the previous equation.
The main issue is showing that $\rho_\epsilon^2\, u_\epsilon$ converges in (say) $\mc D'$ to $gu$: for doing so, we resort to some arguments of \cite{FG} (see Paragraph 3.1.2. therein).
The basic idea is to use the transport equation solved by the $\rho_\epsilon^2$ to
trade space regularity for time compactness.

We start by remarking that, in view of \eqref{EQ10}, one has $\partial_t (\rho_\epsilon^2) = - \D (\rho_\epsilon^2 u_\epsilon)$.
Since $|\rho_\epsilon^2 u_\epsilon| \leq \left( \rho^*  \right)^{3/2} |\sqrt{\rho_\epsilon} u_\epsilon|$ by use of \eqref{ub:rho_e} and \eqref{ub:u-b}, we infer that
$\big( \partial_t (\rho_\epsilon^2) \big)_{\epsilon>0} \subset L^\infty(H^{-1})$, hence
\begin{equation*}
\big( \rho_\epsilon^2\big)_{\epsilon>0} \subset W^{1, \infty}_T(H^{-1}_{\rm loc})\,,
\end{equation*}
where the localisation in space comes from the fact that the initial data $\rho_0^2$ is just $L^\infty(\Omega)$.
Let now $\theta \in\, ]0, 1[\,$: standard Sobolev interpolation gives, for almost all $0 \leq s, t \leq T$, the estimate
\begin{equation*}
\left\|\big( \rho_\epsilon(t) - \rho_\epsilon(s) \big)\chi \right\|_{H^{-\theta}}\, \leq\, \left\|\big( \rho_\epsilon(t) - \rho_\epsilon(s) \big) \chi \right\|_{H^{-1}}^{\theta}\,
\left\| \big( \rho_\epsilon(t)-\rho_\epsilon(s)\big) \chi \right\|_{L^2}^{1 - \theta},
\end{equation*}
where $\chi \in \mathcal{D}(\Omega)$ is an arbitrary compactly supported function. This shows that $(\rho_\epsilon^2)_{\epsilon>0}$ is bounded in every space
$C^{0,\theta}_T (H^{-\theta}_{\rm loc})$. Therefore, by the Ascoli-Arzel\`a theorem, we gather the strong convergence
\begin{equation*}
\rho_\epsilon^2 \,\tend\, g \qquad\qquad \text{ in }\quad C^{0, \theta}\big([0,T];H^{-\theta}_{\rm loc}(\Omega)\big)\,,
\end{equation*}
for all $0 < \theta < 1$ and all fixed $T>0$. Combining this property with the Lemma \ref{Para}, which provides continuity of the function product $(a, b) \mapsto ab$ in the
$H^{-\theta} \times H^1 \rightarrow H^{-\theta -\delta}$ topology (for $\delta > 0$ arbitrarily small), we get 
\begin{equation*}
\rho_\epsilon^2 \,u_\epsilon\, \wtend\, g\,u \qquad\qquad \text{ in }\quad \mathcal{D}'\big(\,]0, T[\,\times \Omega\big)\,.
\end{equation*}
It is now possible to take the limit $\epsilon \rightarrow 0^+$ in the weak form of equation \eqref{EQ10}, thus recovering equation \eqref{EQ11}.
\end{proof}

We can now complete the proof to Proposition \ref{PropStrCnv}.

\begin{proof}[Proof of Proposition \ref{PropStrCnv}]
In view of Lemmas \ref{l:g} and \ref{LemmaR2}, it follows that both $\rho^2$ and $g$ are weak solutions to the initial value problem \eqref{EQ11}, and they both belong to
$L^\infty\big(\R_+\times\Omega\big)$.
But problem \eqref{EQ11} is in fact well-posed in the previous space, as a consequence of Di Perna and P.-L. Lions theory. More precisely, in order to apply their uniqueness result
(see Theorem II.2 of \cite{DL}),
we have to make sure that the limit velocity field $u\in L^2_{\rm loc}\big(\R_+;H^1(\Omega)\big)$ fulfills the following condition: for any  fixed $T>0$,
\begin{equation}\label{EQ13}
\frac{u(t, x)}{1 + |x|}\; \in\; L^1_T(L^1) + L^1_T(L^\infty)\,.
\end{equation}
To see this, let us set an arbitrary $R > 0$ and decompose according to whether $|u| < R$ or not:
\begin{equation*}
\frac{|u(t, x)|}{1 + |x|} \, = \,\mds{1}_{\{|u| < R\}}\, \frac{|u(t, x)|}{1 + |x|}\,  +\, \mds{1}_{\{|u| \geq R\}} \,\frac{|u(t, x)|}{1 + |x|}\,.
\end{equation*}
On the one hand, the measure of the set $A_R(t) := \{ |u(t)| \geq R \}$ is bounded by the Bienaym\'e-Chebyshev inequality, as
\begin{equation*}
\text{meas}\,A_R(t)
\,\leq\, \frac{1}{R^2} \int_\Omega |u(t, x)|^2 \,\text{d}x\,.
\end{equation*}
Observe that the term on the right-hand side of the previous estimate belongs to $L^1_T$ for all fixed $T>0$.
Therefore, H\"older's inequality yields
\begin{equation*}
\int_0^T \int_{A_R} \frac{|u(t, x)|}{1 + |x|}\, \text{d}x \dt\, \leq\, \int_0^T \| u(t) \|_{L^1(A_R)}\, \dt \,\leq\, \int_0^T \| u(t) \|_{L^2}\, \big(\text{meas}\,A_R(t)\big)^{1/2}\,\dt \,<\, +\infty\,,
\end{equation*}
implying that $\mds{1}_{A_R(t)}\, u(t, x)\,(1 + |x|)^{-1} \in L^1_T(L^1)$.
On the other hand, we obviously have 
\begin{equation*}
\mds{1}_{\{|u(t)| < R\}}\, \frac{|u(t, x)|}{1 + |x|}\, \leq\, R\, \in\, L^1_T(L^\infty)\,.
\end{equation*}

Therefore, \eqref{EQ13} is indeed satisfied by $u$, so we can apply Theorem II.2 of \cite{DL}. This result implies that we do have $\rho^2 \equiv g$ almost everywhere.
In particular, we also deduce the weak convergence \eqref{EQ1}.
As already remarked above, that property in turn yields local strong convergence of the $\rho_\epsilon$ to $\rho$.
Indeed, let $K \subset\Omega$ be a compact set; then, using \eqref{EQ1} we get
\begin{equation*}
\langle \rho_\epsilon^2, \mds{1}_K \rangle_{L^\infty_{t,x}\times L^1_{t,x}}\, =\, \| \rho_\epsilon \|_{L^2_T{L^2(K)}}^2 \,\tend\,
\langle \rho^2, \mds{1}_K \rangle_{L^\infty_{t,x}\times L^1_{t,x}}\,=\,\| \rho \|_{L^2_T{L^2(K)}}^2\,.
\end{equation*}
At this point, the fact that $L^2_T\big(L^2(K)\big)$ has a Euclidean structure gives strong convergence: 
because of the weak-$*$ convergence \eqref{conv:weak-rho-r} of the $\rho_\epsilon$, we infer that $\langle \rho_\epsilon,  \rho \rangle_{L^\infty(\,]0,T[\,\times K) \times L^1(\,]0,T[\,\times K)}$
tends to $\| \rho \|_{L^2_T(L^2(K))}$ and hence
\begin{equation*}
\| \rho_\epsilon  - \rho\|^2_{L^2_T(L^2(K))}\,=\,\|\rho_\epsilon \|^2_{L^2_T(L^2(K))}\, +\, \|\rho \|^2_{L^2_T(L^2(K))}\,-\,2 \int_0^T \int_K \rho_\epsilon \rho \, \text{d}x \text{d}t\,
\tend_{\epsilon \rightarrow 0^+}\,0\,.
\end{equation*}
In particular, after extracting one more time, we deduce the pointwise convergence $\rho_\epsilon\, \tend\, \rho$, in the limit $\eps\ra0^+$.
\end{proof}

\subsection{The singular part of the equations}\label{SecSg}

In this part, we focus our attention on the singular part of system \eqref{MHD1}, namely on the term $\epsilon^{-1}\big( \nabla \pi_\epsilon + \rho_\epsilon u_\epsilon^\perp \big)$
in the momentum equation.
Note that any singular gradient term disappears in its weak formulation, due to the divergence-free condition on the test functions.

\begin{prop}\label{PropDebut}
Let $\bigl(\rho_\eps,u_\eps,b_\eps\bigr)_{\eps>0}$ be a sequence of weak solutions to system \eqref{MHD1}, associated with the sequence of initial data $\bigl(\rho_{0,\eps},u_{0,\eps}, b_{0,\eps}\bigr)_{\eps > 0}$ satisfying
the assumptions fixed in Subsection \ref{SecID}. Let $(\rho,u,b)$ be a limit point of the sequence $\bigl(\rho_\eps,u_\eps,b_\eps\bigr)_{\eps>0}$, as identified in Subsection \ref{SecUB}.
\begin{enumerate}[1)]
\item In the case of a quasi-homogeneous density, for all test function $\phi \in \mathcal{D}\big(\mathbb{R}_+ \times \Omega ; \mathbb{R}^2\big)$ such that $\div \phi=0$, we have
\begin{equation*}
\frac{1}{\epsilon} \int_0^{+ \infty} \int_\Omega \rho_\epsilon\, u_\epsilon^\perp\cdot\phi \dx \dt\, \tend_{\epsilon \rightarrow 0^+}\, \int_0^{+ \infty} \int_\Omega r\,u^\perp \cdot \phi \dx \dt\,.
\end{equation*}

\item In the fully non-homogeneous case, the limit density satisfies $\rho(t, x) = \rho_0(x)$ for almost every $(t,x)\in \mathbb{R}_+ \times\Omega$. Moreover, we have the relations
$\D (\rho_0 \,u)\, =\, \D u\, =\, 0$ almost everywhere in $\mathbb{R}_+ \times \Omega$. In particular, $\nabla \rho_0 \cdot u = 0$ almost everywhere in $\mathbb{R}_+  \times \Omega$.
\end{enumerate}
\end{prop}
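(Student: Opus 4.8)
The plan is, in both regimes, to multiply the weak momentum equation (Definition \ref{DEFSol} (vi)) by $\epsilon$, so that every summand acquires a factor $\epsilon$ except the Coriolis term $\tfrac1\epsilon\rho_\epsilon u_\epsilon^\perp$, and then to exploit the special algebraic form of $u_\epsilon^\perp$ together with the strong convergence of the densities from Proposition \ref{PropStrCnv}. \emph{Quasi-homogeneous case.} Writing $\rho_\epsilon = 1 + \epsilon\,r_\epsilon$ I would split $\tfrac1\epsilon\rho_\epsilon u_\epsilon^\perp = \tfrac1\epsilon u_\epsilon^\perp + r_\epsilon u_\epsilon^\perp$. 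The singular summand is a gradient field, since $\curl(u_\epsilon^\perp) = \div u_\epsilon = 0$ (on $\R^2$ directly, as $u_\epsilon^\perp \in L^2$ has vanishing curl; on $\T^2$ after subtracting the spatial average $\overline{u_\epsilon}$, which one controls separately since $\overline{\rho_\epsilon u_\epsilon}$ solves the linear rotation ODE obtained by integrating the momentum equation in $x$); hence $\tfrac1\epsilon\int_0^{+\infty}\!\!\int_\Omega u_\epsilon^\perp\cdot\phi\,\dx\dt$ vanishes against any divergence-free test function $\phi$. For the summand $r_\epsilon u_\epsilon^\perp$ one passes to the limit by a weak--strong argument: the uniform bounds of Subsection \ref{SecUB} give weak convergence of $u_\epsilon$ in $L^2_{\rm loc}(\R_+;H^1)$, while one of the two factors converges strongly in $L^2_{\rm loc}(\R_+\times\Omega)$ (for $u_\epsilon$ this follows from the fact that, when $\rho_\epsilon = 1 + \epsilon r_\epsilon$, the curl of the momentum equation carries no singular term, giving uniform time compactness of $\curl(\rho_\epsilon u_\epsilon)$). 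Thus $r_\epsilon u_\epsilon^\perp \wtend r\,u^\perp$ in $\mathcal D'$, which is the claim.

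\emph{Fully non-homogeneous case.} I would test the momentum equation against $\phi = \nabla^\perp\chi$ with $\chi$ a scalar test function on $[0,+\infty[\,\times\Omega$ (admissible, since $\div(\nabla^\perp\chi) = 0$) and multiply by $\epsilon$. By the uniform bounds \eqref{ub:u-b}--\eqref{ub:nabla_u-b} and the uniform boundedness of $\nu(\rho_\epsilon)$ (consequence of $(\rho_\epsilon)_\epsilon\subset L^\infty(L^\infty)$), every term but the Coriolis one is $\epsilon$ times a quantity bounded uniformly in $\epsilon$ and therefore disappears. In the Coriolis term the pointwise identity $u_\epsilon^\perp\cdot\nabla^\perp\chi = u_\epsilon\cdot\nabla\chi$ turns $\tfrac1\epsilon\int\!\!\int\rho_\epsilon u_\epsilon^\perp\cdot\nabla^\perp\chi$ into $\int\!\!\int\rho_\epsilon u_\epsilon\cdot\nabla\chi$, which converges by weak--strong convergence ($\rho_\epsilon\to\rho$ strongly in $L^2_{\rm loc}(\R_+\times\Omega)$ by Proposition \ref{PropStrCnv}, $u_\epsilon\wtend u$ weakly in $L^2_{\rm loc}(\R_+;H^1)$). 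This yields $\int_0^{+\infty}\!\!\int_\Omega\rho\,u\cdot\nabla\chi\,\dx\dt = 0$ for every such $\chi$, i.e. $\div(\rho u) = 0$ in $\mathcal D'(\R_+\times\Omega)$. On the other hand, passing to the limit in the mass equation (Definition \ref{DEFSol} (iv)) --- again via the weak--strong convergence of $\rho_\epsilon u_\epsilon$ and the strong convergence $\rho_{0,\epsilon}\to\rho_0$ noted after \eqref{EQcv} --- shows that $\rho$ solves $\partial_t\rho + \div(\rho u) = 0$ weakly with initial datum $\rho_0$. Combining the two facts yields $\partial_t\rho = 0$ in $\mathcal D'$, so $\rho$ is independent of time; reinserting this into the weak mass equation forces $\rho(t,\cdot) = \rho_0$ for a.e. $t$. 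Finally $\div u = 0$ a.e. (as a weak limit of $\div u_\epsilon = 0$), hence $\div(\rho_0 u) = \div(\rho u) = 0$ a.e., and since $\rho_0\in C^2_b$ the Leibniz rule gives $\nabla\rho_0\cdot u = \div(\rho_0 u) - \rho_0\,\div u = 0$ a.e.

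\emph{Main obstacle.} The genuinely non-trivial point, in the fully non-homogeneous regime, is the observation that testing the $\epsilon$-weighted momentum equation precisely against $\nabla^\perp\chi$ produces the rigidity constraint $\div(\rho u) = 0$, which in turn collapses the transport equation solved by the (merely transported!) limit density into $\partial_t\rho = 0$ and identifies the limit profile with the \emph{initial} reference density $\rho_0$; in the quasi-homogeneous regime the only comparable subtlety is the non-vanishing spatial mean of $u_\epsilon$ on the torus. Throughout, one relies on the strong convergence of $\rho_\epsilon$ obtained in Proposition \ref{PropStrCnv}.
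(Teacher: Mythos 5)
Your treatment of the fully non-homogeneous case is essentially the paper's: multiplying the weak momentum equation by $\epsilon$ kills every term but the Coriolis one, whose limit forces $\int\rho\,u^\perp\cdot\phi=0$ for all divergence-free $\phi$ (whether you phrase this as $\rho u^\perp=\nabla p$ and take the curl, as the paper does, or test directly against $\nabla^\perp\chi$ as you do, is immaterial), and combining $\div(\rho u)=0$ with the limit of the mass equation gives $\partial_t\rho=0$, hence $\rho\equiv\rho_0$ and the remaining identities. That part is correct.

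The gap is in the quasi-homogeneous case, in the passage to the limit of $r_\epsilon u_\epsilon^\perp$. You take $u_\epsilon$ as the strongly convergent factor and justify this by a parenthetical claim that the curl of the momentum equation, carrying no singular term, yields uniform time compactness of $\curl(\rho_\epsilon u_\epsilon)$ and hence strong convergence of $u_\epsilon$ in $L^2_{\rm loc}(\R_+\times\Omega)$. This is not established: time equicontinuity of $\curl(\rho_\epsilon u_\epsilon)$ in a negative Sobolev space does not transfer to local strong compactness of $u_\epsilon$ without inverting the curl, a nonlocal operation whose handling (low frequencies on $\R^2$, harmonic parts, the error $(I-S_j)u_\epsilon$) is precisely what the mollified compensated-compactness argument of Subsection \ref{SecQH} is for; note that Theorem \ref{THQH} deliberately claims only weak-$*$ convergence of $u_\epsilon$, and if strong $L^2_{\rm loc}$ convergence of $u_\epsilon$ were available as cheaply as you state, the entire analysis of the convective term would be superfluous. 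The correct and much easier choice is the other factor: $r_\epsilon$ solves the pure transport equation \eqref{eq:r_e} by the divergence-free field $u_\epsilon$, so Proposition \ref{PropStrCnv} applies verbatim to $(r_\epsilon)_{\epsilon}$ and gives $r_\epsilon\to r$ strongly in $L^2_T(L^2_{\rm loc})$; combined with the weak convergence \eqref{conv:u-b} of $u_\epsilon$ this yields $r_\epsilon u_\epsilon^\perp\rightharpoonup r\,u^\perp$ in $\mathcal D'$, which is exactly the paper's route. (Your extra care about the spatial mean of $u_\epsilon$ on $\T^2$ in the gradient step is a legitimate point that the paper passes over in silence.)
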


\begin{proof}

We start by attending to the quasi-homogeneous setting, where the singularity \textsl{de facto} disappears. Indeed, we can write
\begin{equation*}
\frac{1}{\epsilon} \left( \nabla \pi_\epsilon + \rho_\epsilon u_\epsilon^\perp \right)\, =\,\frac{1}{\epsilon} \left( \nabla \pi_\epsilon + u_\epsilon^\perp \right) + r_\epsilon u_\epsilon\,,
\end{equation*}
and the terms in the brackets are perfect gradients.
Therefore, if $T > 0$ and $\phi \in \mathcal{D}\big( [0, T[ \, \times \Omega ; \mathbb{R}^2\big)$ is a divergence-free test function, one gets
\begin{equation*}
\frac{1}{\epsilon} \int_0^T \int_\Omega \rho_\epsilon\, u_\epsilon^\perp\cdot \phi \dx \dt \,=\, \int_0^T \int_\Omega r_\epsilon\, u_\epsilon^\perp \cdot \phi \dx \dt\,.
\end{equation*}
To take the limit $\epsilon \rightarrow 0^+$ in this last integral, we observe that the functions $r_\epsilon$ solve the linear transport equation \eqref{eq:r_e}; hence,
Proposition \ref{PropStrCnv} applies to the sequence $\big(r_\epsilon\big)_{\epsilon>0}$, yielding the strong convergence
\begin{equation*}
r_\epsilon\, \tend_{\epsilon \rightarrow 0^+}\, r \qquad\qquad \text{ in }\qquad L^2_T(L^2_{\rm loc})\,,
\end{equation*}
for any fixed $T>0$. Using the weak convergence \eqref{conv:u-b} for $\big(u_\eps\big)_\eps$, we finally infer that
\begin{equation*}
r_\epsilon\, u_\epsilon\, \tend\, r\,u \qquad\qquad \text{ in }\qquad \mathcal{D}'\big(\,]0, T[\, \times \Omega\big)\,.
\end{equation*}
This completes the proof of the first property of the proposition.

\medbreak
The study of the fully non-homogeneous case is very much similar, with the exception that the singularity does not disappear.
We start by remarking that, exactly as done above, as a consequence of Proposition \ref{PropStrCnv} we deduce that
$\rho_\epsilon\, u_\epsilon\, \tend\, \rho\,u$ in the sense of distributions.
Now, multiplying the momentum equation in its weak form by $\epsilon$, we see that, for any divergence-free $\phi \in \mathcal{D}\big([0, T[\, \times \Omega ; \mathbb{R}^2\big)$, one has
\begin{equation*}
\int_0^T \int_\Omega \rho_\epsilon\, u_\epsilon^\perp \cdot \phi\, \dx \dt\, =\, O(\epsilon)\,.
\end{equation*}
Indeed, the uniform bounds established in Section \ref{SecUB} show that, for any $T>0$, one gets $\big(\rho_\epsilon\, u_\epsilon\big)_{\epsilon>0}\,\subset\,L^2_T(L^2)$,
$\big(\rho_\epsilon\, u_\epsilon \otimes u_\epsilon\big)_{\epsilon>0}$ and $\big(b_\epsilon \otimes b_\epsilon\big)_{\epsilon>0}$ uniformly bounded in $L^\infty_T(L^1)$, and
$\big( \nu(\rho_\epsilon) \nabla u_\epsilon \big)_{\epsilon>0}\,\subset\,L^2_T(L^2)$, while by assumption $\big(m_{0, \epsilon} \big)_{\epsilon>0}$ is bounded in $L^2$. Therefore, we can take
the limit $\eps\ra0^+$ and get, for any test function $\phi$ as above, that
\begin{equation*}
\int_0^T \int_\Omega \rho\, u^\perp \cdot \phi \,\dx \dt\, =\, 0\,.
\end{equation*}
This means that $\rho u^\perp = \nabla p$ for some suitable function $p$, which implies, after taking the $\curl$, the constraint $\D(\rho u) = 0$. With this latter relation at hand,
we look at the mass equation: using again that $\rho_\epsilon\, u_\epsilon\,\ra\, \rho\, u$ in $\mc D'$, we have no trouble in taking the limit $\epsilon \rightarrow 0^+$, and we obtain
\begin{equation*}
\partial_t \rho\, +\, \D(\rho\, u)\, =\,0\,,\qquad\quad\mbox{ which implies }\qquad\quad \partial_t \rho = 0\,.
\end{equation*}
The consequence is that $\rho(t,x)= \rho_0(x)$ for almost every $(t,x)$. In particular, the relation $\D(\rho_0 u) = \nabla \rho_0 \cdot u = 0$ is satisfied almost everywhere in $\mathbb{R}_+ \times \Omega$.
\end{proof}

\subsection{Quantitative convergence properties for the density}\label{SecSig}

This paragraph centers on the density functions in the fully non-homogeneous case.
In Subsection \ref{SecStrDen} above, we have shown strong convergence of the densities in $L^2_{\rm loc}\big(\R_+\times\Omega\big)$. However, this convergence is not enough for the
convergence in the fully non-homogeneous case, since neither quantitative nor uniform with respect to time.

As we have seen, there is no obvious way to write $\rho_\epsilon = \rho_0 + \epsilon \sigma_\epsilon$, with $\big(\sigma_\epsilon\big)_\eps$ being bounded in some Banach space.
Nonetheless, it turns out that the previous decomposition holds true thanks to the structure of the system, but will yield uniform bounds for $\big(\sigma_\epsilon\big)_\eps$
in the very low regularity space $H^{-3-\delta}$.
We refer to Subsection 3.3 of \cite{FG} for the original proofs (the presence of the magnetic field introduces only minor modifications).

\begin{prop}\label{PropSigma}
Define the functions $\sigma_\epsilon\,:=\,\big(\rho_\epsilon - \rho_0\big)/\eps$. Then the sequence $\big(\s_\eps\big)_{\eps >0}$ is uniformly bounded in
$L^\infty_{\rm loc}\big(\R_+;H^{-3-\delta}(\Omega)\big)$
for all $\delta > 0$. In particular, up to an extraction, it weakly-$*$ converges to some $\sigma$ in that space.
\end{prop}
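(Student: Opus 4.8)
The plan is to produce, for each fixed $\eps\in\,]0,1]$, an explicit representation formula for $\sigma_\eps(t)$ whose right-hand side is visibly bounded in $H^{-3-\delta}(\Omega)$, uniformly in $\eps$ and in $t\in[0,T]$, and then to invoke the Banach--Alaoglu theorem. This follows the scheme of Subsection~3.3 of \cite{FG}, the magnetic field entering only through a term structurally identical to the convective one. The first ingredient is a vorticity identity. Since $\partial_t\rho_0=0$, the mass equation rewrites as $\eps\,\partial_t\sigma_\eps=-\div(\rho_\eps u_\eps)$ in $\mathcal{D}'(\R_+\times\Omega)$. On the other hand, I take the $\curl$ of the momentum equation in \eqref{MHD1} --- equivalently, I test its weak formulation (point (vi) of Definition \ref{DEFSol}) against divergence-free fields of the form $\nabla^\perp\chi$, which annihilates the pressure gradient and $\tfrac12\nabla|b_\eps|^2$. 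Using $\curl\!\big((\rho_\eps u_\eps)^\perp\big)=\div(\rho_\eps u_\eps)$, this gives
\[ \partial_t\curl(\rho_\eps u_\eps)\,+\,\tfrac1\eps\,\div(\rho_\eps u_\eps)\,=\,\curl\div\big(-\rho_\eps u_\eps\otimes u_\eps+b_\eps\otimes b_\eps+\nu(\rho_\eps)\nabla u_\eps\big)=:\curl\div\,\mathbb{H}_\eps\,, \]
and substituting $\tfrac1\eps\div(\rho_\eps u_\eps)=-\partial_t\sigma_\eps$ yields $\partial_t\big(\curl(\rho_\eps u_\eps)-\sigma_\eps\big)=\curl\div\,\mathbb{H}_\eps$.

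Next I integrate this identity in time. The initial conditions built into Definition \ref{DEFSol}, together with $\sigma_\eps|_{t=0}=(\rho_{0,\eps}-\rho_0)/\eps=r_{0,\eps}$ in the fully non-homogeneous regime, give $\big(\curl(\rho_\eps u_\eps)-\sigma_\eps\big)|_{t=0}=\curl(m_{0,\eps})-r_{0,\eps}$. Integrating over $[0,t]$ (legitimate once one knows, as in the next step, that $\curl\div\,\mathbb{H}_\eps\in L^1_{\rm loc}(\R_+;H^{-3-\delta})$) produces
\[ \sigma_\eps(t)\,=\,\curl\big(\rho_\eps u_\eps(t)\big)\,-\,\curl(m_{0,\eps})\,+\,r_{0,\eps}\,-\,\int_0^t\curl\div\,\mathbb{H}_\eps(s)\,\mathrm{d}s\,. \]

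It then remains to bound each term in $H^{-3-\delta}(\Omega)$. By \eqref{ub:u-b} and \eqref{ub:rho_e}, $\rho_\eps u_\eps(t)$ is bounded in $L^2(\Omega)$, hence $\curl(\rho_\eps u_\eps(t))$ is bounded in $H^{-1}\hookrightarrow H^{-3-\delta}$ uniformly in $t$ and $\eps$; similarly $\curl(m_{0,\eps})\subset H^{-1}$ and $r_{0,\eps}\subset L^2$ by the assumptions on the initial data. For the last term, $\rho_\eps u_\eps\otimes u_\eps$ and $b_\eps\otimes b_\eps$ are bounded in $L^\infty_T(L^1)$ by the energy inequality \eqref{EN}, while $\nu(\rho_\eps)\nabla u_\eps$ is bounded in $L^2_T(L^2)$ since $0\le\rho_\eps\le2\rho^*$ and $\nu$ is continuous. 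In dimension $2$ one has the embeddings $L^1(\Omega)\hookrightarrow H^{-1-\delta}(\Omega)$ and $L^2(\Omega)\hookrightarrow H^{-1-\delta}(\Omega)$; as $\curl\div$ is a second-order operator, $\curl\div\,\mathbb{H}_\eps$ is bounded in $L^1_T(H^{-3-\delta})$, and integrating in $s\in[0,t]\subset[0,T]$ (using Cauchy--Schwarz in time for the $L^2_T(L^2)$ contribution) gives a bound $\le C(T,\delta)$, uniform in $\eps$. Hence $\|\sigma_\eps\|_{L^\infty_T(H^{-3-\delta})}\le C(T,\delta)$ for every $T>0$, so $\big(\sigma_\eps\big)_\eps$ is bounded in $L^\infty_{\rm loc}(\R_+;H^{-3-\delta}(\Omega))$, and the weak-$*$ convergence of a subsequence follows.

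The main obstacle is not the estimates, which are routine, but the rigorous justification of the first two steps at the level of weak solutions: performing the $\curl$ of the distributional momentum equation and, above all, recovering the correct initial datum in the time-integrated identity (this requires that $t\mapsto\curl(\rho_\eps u_\eps(t))-\sigma_\eps(t)$ be absolutely continuous into $H^{-3-\delta}$, which follows from the identity of the first step once the right-hand side is known to lie in $L^1_T(H^{-3-\delta})$, together with the time-continuity properties in Definition \ref{DEFSol}). This is exactly the argument of \cite{FG}, Subsection~3.3, and transfers with only cosmetic modifications: the genuinely new terms are $b_\eps\otimes b_\eps$, treated verbatim like $\rho_\eps u_\eps\otimes u_\eps$, and the variable coefficient $\nu(\rho_\eps)$, whose uniform $L^\infty$ bound makes it harmless (the resistivity $\mu(\rho_\eps)$ does not appear, since only the $\curl$ of the momentum equation is used).
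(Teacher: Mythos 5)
Your proposal is correct and follows essentially the same route as the paper: take the $\curl$ of the momentum equation to kill the pressure and turn the Coriolis term into $\tfrac1\eps\div(\rho_\eps u_\eps)$, combine with the mass equation to obtain $\partial_t\big(\curl(\rho_\eps u_\eps)-\sigma_\eps\big)=\curl f_\eps$ with $f_\eps$ bounded in $L^2_T(H^{-2-\delta})$ via the embedding $L^1\hookrightarrow H^{-1-\delta}$, and conclude from the $L^\infty_T(H^{-1})$ bound on $\curl(\rho_\eps u_\eps)$. Your explicit time integration and recovery of the initial datum is just a slightly more detailed write-up of the step the paper leaves implicit.
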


\begin{proof}
For notational convenience, set
\begin{equation} \label{def:V-f}
V_\epsilon\,=\,\rho_\epsilon\, u_\epsilon\qquad\mbox{ and }\qquad 
f_\epsilon\, =\, \D \big(\nu (\rho_\epsilon)\, \nabla u_\epsilon\big)\, +\,\D \big(b_\epsilon \otimes b_\epsilon - \rho_\epsilon\, u_\epsilon \otimes u_\epsilon\big)\,.
\end{equation}
Because of the Sobolev embedding $H^{1 + \delta} \subset L^\infty$ (see the note following Proposition \ref{p:embed}), which holds for any $\delta > 0$, we see that $L^1 \subset H^{-1-\delta}$. Hence,
arguing as in the proof of Proposition \ref{PropDebut}, we see that, for any $T>0$ and any arbitrarily small $\de>0$, one has
\begin{equation} \label{ub:f_e}
\big(f_\epsilon\big)_{\epsilon}\, \subset\, L^2_T(H^{-1}) + L^\infty_T(H^{-2-\delta})\, \subset\, L^2_T(H^{-2-\delta})\,.
\end{equation}

Now, because $\rho_0$ is time-independent, we can write the mass and momentum equations as
\begin{equation}\label{Cor1}
\left\{\begin{array}{l}
\epsilon\, \partial_t \sigma_\epsilon\, +\, \D V_\epsilon\, =\, 0 \\[1ex]
\epsilon\, \partial_t V_\epsilon\, + \,\nabla \pi_\epsilon\, +\, V_\epsilon^\perp\, =\, \epsilon\, f_\epsilon\,.
\end{array}
\right.
\end{equation}
Taking the curl of the second equation and computing the difference with the first one leads to 
\begin{equation}\label{Cor3}
\big( \partial_t (\eta_\epsilon - \sigma_\epsilon) \big)_{\epsilon>0}\, =\, \big( \curl (f_\epsilon) \big)_{\epsilon>0}\, \subset\, L^2_T(H^{-3-\delta})\,,
\end{equation}
where we have set $\eta_\epsilon = \curl V_\epsilon$. Notice that $\big(\eta_\eps\big)_\eps\,\subset\,L^\infty_T(H^{-1})$ for all $T>0$ and all $\de>0$, in view of \eqref{ub:u-b} and \eqref{ub:rho_e},
whence the uniform bound $(\sigma_\epsilon)_{\epsilon} \subset L^\infty_T(H^{-3-\delta})$.
In particular, there exists $\sigma \in L^\infty_T(H^{-3-\delta})$ such that $\sigma_\epsilon \stackrel{*}{\wtend}\sigma$ in this space.
\end{proof}

Next, for any $\eps>0$ let us set
$$
s_\epsilon\,:=\, \rho_\epsilon\, -\, \rho_0\,=\,\eps\,\s_\eps\,.
$$
By interpolating between the regularity of $s_\eps$ and the one of $\s_\eps$, we want to show convergence of the densities in better spaces, at the cost of losing the linear convergence speed $O(\epsilon)$,
which we will have to replace by $O(\epsilon^\theta)$, for some $0 < \theta < 1$.

\begin{prop}\label{PropSeps}
Given $0 < \gamma < 1$, there exists $\big(\theta,\beta, k\big)\in\R^3$, satisfying
\begin{equation*}
0 < \beta < \gamma < k < 1\qquad\qquad\mbox{ and }\qquad\qquad 0 < \theta < 1\,,
\end{equation*}
such that the following uniform embeddings,
\begin{equation*}
\left( \epsilon^{-\theta}\, s_\epsilon \right)_{\epsilon > 0} \,\subset\, C^{0, \beta}\big([0,T];H^{-k}(\Omega)\big)\qquad \text{ and } \qquad
\left( \epsilon^{-\theta}\, s_\epsilon\, u_\epsilon \right)_{\epsilon > 0} \,\subset \,L^2\big([0,T];H^{-k-\delta}(\Omega)\big)\,,
\end{equation*}
hold true for any $T>0$ and any arbitrarily small $\delta > 0$. In addition,
\begin{equation*}
\epsilon^{-\theta}\, s_\epsilon\,\tend\, 0 \; \text{ in }\; L^\infty\big([0,T];H^{-k-\delta}_{\rm loc}(\Omega)\big) \qquad \text{ and } \qquad
\epsilon^{-\theta}\, s_\epsilon\, u_\epsilon\, \wtend\, 0 \;\text{ in }\; L^2\big([0,T];H^{-k-\delta}_{\rm loc}(\Omega)\big)
\end{equation*}
for any $T>0$ and any small $\delta > 0$.
\end{prop}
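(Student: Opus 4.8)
The plan is to prove the two uniform embeddings by interpolating between the $L^2$-regularity of $s_\epsilon$ and the $H^{-3-\delta}$-regularity of $\sigma_\epsilon$, then to deduce the convergence statements from the uniform bounds together with the strong convergence $\rho_\epsilon \to \rho_0$ in $L^2_T(L^2_{\rm loc})$ already established in Proposition \ref{PropStrCnv} and the identification $\rho = \rho_0$ from Proposition \ref{PropDebut}.

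\medbreak

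First I would fix the exponents. Since $s_\epsilon = \rho_\epsilon - \rho_0 \in L^\infty_T(L^2)$ uniformly (by \eqref{ub:u-b}–\eqref{ub:rho_e} and the hypotheses on $\rho_0$) and $s_\epsilon = \epsilon\,\sigma_\epsilon$ with $(\sigma_\epsilon)_\epsilon \subset L^\infty_T(H^{-3-\delta})$ by Proposition \ref{PropSigma}, Sobolev interpolation gives, for $\lambda\in\,]0,1[\,$,
\begin{equation*}
\|s_\epsilon(t)\|_{H^{-k}}\,\lesssim\,\|s_\epsilon(t)\|_{L^2}^{1-\lambda}\,\|s_\epsilon(t)\|_{H^{-3-\delta}}^{\lambda}\,\lesssim\,\epsilon^{\lambda}\,,\qquad k\,=\,\lambda(3+\delta)\,.
\end{equation*}
Choosing $\lambda$ small enough that $k = \lambda(3+\delta) < 1$ and, moreover, $k > \gamma$ (which forces $\lambda > \gamma/(3+\delta)$, compatible with $k<1$ for $\delta$ small) sets $\theta := \lambda$ and determines $k$. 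For the time modulus of continuity one uses \eqref{Cor3}: $\partial_t(\eta_\epsilon - \sigma_\epsilon) \in L^2_T(H^{-3-\delta})$, which together with $\partial_t s_\epsilon = \epsilon\,\partial_t\sigma_\epsilon$ and the transport equation $\partial_t s_\epsilon = -\D(\rho_\epsilon u_\epsilon)$ (so $\partial_t s_\epsilon \in L^2_T(H^{-1})$ uniformly since $\rho_\epsilon u_\epsilon \in L^2_T(L^2)$) yields, after another interpolation between the $H^{-1}$-bound on $\partial_t s_\epsilon$ and the $H^{-3-\delta}$-bound, a uniform bound on $\epsilon^{-\theta} s_\epsilon$ in $C^{0,\beta}_T(H^{-k})$ with $\beta$ determined by the interpolation parameters; one checks $\beta < \gamma$ by keeping $\theta$ small. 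The choice $\beta < \gamma < k < 1$ is then arranged by tuning $\lambda$. For the product term, I would use the paradifferential product estimate (Lemma \ref{Para} in the appendix): $H^{-k}\times H^1 \to H^{-k-\delta}$ continuously, so $\|s_\epsilon u_\epsilon\|_{H^{-k-\delta}} \lesssim \|s_\epsilon\|_{H^{-k}}\|u_\epsilon\|_{H^1}$, and since $(u_\epsilon)_\epsilon \subset L^2_T(H^1)$ uniformly, multiplying by $\epsilon^{-\theta}$ gives the second uniform embedding in $L^2_T(H^{-k-\delta})$.

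\medbreak

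For the convergence statements, the key point is that $\epsilon^{-\theta} s_\epsilon$ is uniformly bounded in $C^{0,\beta}_T(H^{-k})$, a compact embedding into $C^0_T(H^{-k-\delta}_{\rm loc})$ by Ascoli–Arzelà (equicontinuity in $t$ from the Hölder bound, compactness in $x$ from $H^{-k}_{\rm loc} \hookrightarrow\hookrightarrow H^{-k-\delta}_{\rm loc}$). Hence, up to extraction, $\epsilon^{-\theta} s_\epsilon$ converges strongly in $L^\infty_T(H^{-k-\delta}_{\rm loc})$ to some limit $\ell$. To identify $\ell = 0$: since $\theta < 1$, we have $\epsilon^{-\theta} s_\epsilon = \epsilon^{1-\theta}\sigma_\epsilon$, and $(\sigma_\epsilon)_\epsilon$ is bounded in $L^\infty_T(H^{-3-\delta})$ while $\epsilon^{1-\theta} \to 0$; therefore $\epsilon^{-\theta} s_\epsilon \to 0$ in $L^\infty_T(H^{-3-\delta})$, and the limit in the stronger topology must coincide, so $\ell = 0$. (Alternatively, $s_\epsilon \to 0$ in $L^2_T(L^2_{\rm loc})$ by Proposition \ref{PropStrCnv} plus $\rho = \rho_0$; combined with the uniform $C^{0,\beta}_T(H^{-k})$ bound and interpolation, $\epsilon^{-\theta} s_\epsilon$ still tends to zero.) For the product, $\epsilon^{-\theta} s_\epsilon u_\epsilon$: by the product continuity $H^{-k-\delta}\times H^1 \to H^{-k-2\delta}$, the strong convergence $\epsilon^{-\theta} s_\epsilon \to 0$ in $L^\infty_T(H^{-k-\delta}_{\rm loc})$ against the weak convergence $u_\epsilon \rightharpoonup u$ in $L^2_T(H^1)$ gives $\epsilon^{-\theta} s_\epsilon u_\epsilon \rightharpoonup 0$ in $L^2_T(H^{-k-\delta}_{\rm loc})$, after relabelling $\delta$.

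\medbreak

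The main obstacle I anticipate is bookkeeping the interpolation exponents so that all four constraints $0 < \beta < \gamma < k < 1$ and $0 < \theta < 1$ hold simultaneously with a single choice of $\lambda$ and $\delta$: the spatial exponent $k$ and the Hölder exponent $\beta$ are both tied to $\theta$, and one must verify there is an open range of $\theta$ (equivalently $\lambda$) making $\gamma$ fall strictly between $\beta$ and $k$ — this is where taking $\delta$ small and $\gamma$ fixed does the job, since $k \to 3\lambda$ and one can push $\lambda$ past $\gamma/3$ while keeping $\beta$ (which scales like a smaller power of the interpolation parameter) below $\gamma$. The product estimates and the Ascoli–Arzelà compactness are routine given Lemma \ref{Para}; the only genuine care is that all statements are \emph{local} in space because $\rho_0 \in L^\infty$ only (no global integrability of $s_\epsilon$ beyond $L^2$), which is already reflected in the $H^{-k-\delta}_{\rm loc}$ notation.
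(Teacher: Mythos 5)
Your overall strategy coincides with the paper's: interpolate the ``no $\eps$-gain'' regularity of $s_\eps$ against the $O(\eps)$ bound in $H^{-3-\delta}$ coming from Proposition \ref{PropSigma}, then get the H\"older-in-time bound, apply Ascoli--Arzel\`a to identify the (zero) limit, and use the product laws of Lemma \ref{Para}/Corollary \ref{c:product} for the terms $s_\eps u_\eps$. The sup-norm part of your argument (interpolating $L^2$ against $H^{-3-\delta}$ to get $\|s_\eps(t)\|_{H^{-k}}\lesssim\eps^\lambda$ with $k=\lambda(3+\delta)$) and the whole convergence part are correct and match the paper.

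There is, however, one step that fails as written: the H\"older seminorm. You propose to obtain the $C^{0,\beta}_T(H^{-k})$ bound by ``interpolation between the $H^{-1}$-bound on $\partial_t s_\eps$ and the $H^{-3-\delta}$-bound''. Interpolating between $H^{-1}$ and $H^{-3-\delta}$ can only produce control in $H^{-k}$ for $1\leq k\leq 3+\delta$, whereas you need $k<1$. To reach $k<1$ the $L^2$ endpoint must also enter the interpolation of the \emph{increments}. The paper's route is: first show $\|s_\eps(t)-s_\eps(s)\|_{H^{-\gamma}}\leq\|s_\eps(t)-s_\eps(s)\|_{L^2}^{1-\gamma}\|s_\eps(t)-s_\eps(s)\|_{H^{-1}}^{\gamma}\lesssim|t-s|^{\gamma}$, i.e.\ $(s_\eps)_\eps\subset C^{0,\gamma}_T(H^{-\gamma})$, using $L^\infty_T(L^2)\cap W^{1,\infty}_T(H^{-1})$; \emph{then} interpolate this against $\|s_\eps(t)-s_\eps(s)\|_{H^{-3-\delta}}\leq C\eps$, which yields in one stroke $k=\gamma(1-\theta)+(3+\delta)\theta$, $\beta=(1-\theta)\gamma$ and the factor $\eps^{\theta}$, so that $\beta<\gamma<k$ holds automatically and $k<1$ for $\theta$ small. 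Your bookkeeping concern is thus resolved, but only after inserting this missing $L^2$ endpoint. A second, minor point: the uniform bound $(s_\eps)_\eps\subset L^\infty_T(L^2)$ on $\Omega=\R^2$ does not follow from the pointwise bounds \eqref{ub:rho_e} alone (which give only $L^\infty$); it is obtained from the transport equation $\partial_t s_\eps+\div(s_\eps u_\eps)=-u_\eps\cdot\nabla\rho_0$ with $L^2$ source (since $\rho_0\in C^2_b$) and initial datum $\eps r_{0,\eps}\in L^2$.
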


\begin{proof}
The function $s_\epsilon$ solves a transport equation with a second member:
\begin{equation*}
\partial_t s_\epsilon + \D (s_\epsilon u_\epsilon) = -u_\epsilon \cdot \nabla \rho_0.
\end{equation*}
with initial datum $\big(s_{\eps}\big)_{|t = 0} = \epsilon r_{0, \epsilon}$. Because we have assumed that $\rho_0 \in C^2_b$, Sobolev embeddings show that, for any fixed $T>0$, the sequence
$\big(u_\epsilon \cdot \nabla \rho_0\big)_\eps\subset L^2_T(H^1)$ is bounded in every space $L^2_T(L^q)$, for $2 \leq q < +\infty$. In addition, $s_\epsilon = \rho_\epsilon - \rho_0$
is trivially uniformly bounded in $L^\infty(L^\infty)$; therefore, we finally infer, for all fixed $T>0$, the uniform bounds
\begin{equation*}
\big(s_\epsilon\big)_{\epsilon > 0}\, \subset \,L^\infty_T(L^2 \cap L^\infty)\,.
\end{equation*}
Furthermore, writing $\partial_t s_\epsilon = -\D (\rho_\epsilon u_\epsilon)$ and reasoning as in the proof of Lemma \ref{l:g}, we see that
$\big(s_\epsilon\big)_{\epsilon > 0} \subset W^{1, \infty}_T(H^{-1})$ for all $T>0$; after interpolation between Sobolev spaces, we get,
for every $0 \leq \gamma \leq 1$, the embedding
\begin{equation*}
\big(s_\epsilon\big)_{\epsilon > 0}\, \subset\, C^{0, \gamma }_T(H^{-\gamma})\,.
\end{equation*}

On the other hand, from Proposition \ref{PropSeps} we know that $\big(\sigma_\epsilon\big)_\eps\,\subset\, L^\infty_T (H^{-3-\delta})$ for $T>0$ and arbitrarily small $\delta > 0$.
Therefore, for $0 \leq t_1, t_2 \leq T$, and for $0 < \theta < 1$ such that $k = \gamma(1 - \theta) + (3+\delta) \theta$, we have
\begin{align*}
\left\| s_\epsilon (t_2) - s_\epsilon(t_1) \right\|_{H^{-k}}\,& \leq\, \| s_\epsilon (t_2) - s_\epsilon(t_1) \|_{H^{-\gamma}}^{1-\theta}\,\| s_\epsilon (t_2) - s_\epsilon(t_1) \|^\theta_{H^{-3 - \delta}} \\
& \leq\, 2\, \| s_\epsilon \|_{C^{0, \gamma}_T(H^{-\gamma})}^{1 - \theta} \, |t_2 - t_1|^{\gamma(1 - \theta)}\, \epsilon^{\theta}\, \| \sigma_\epsilon \|_{L^\infty_T(H^{-3-\delta})}^\theta\,.
\end{align*}
By setting $\beta = (1 - \theta) \gamma$, we get $\big(\eps^{-\theta}\,s_\eps\big)_\eps\subset C_T^{0,\beta}(H^{-k})$, as claimed. We deduce from the Ascoli-Arzel\`a theorem that
the sequence $\big(\epsilon^{-\theta} \,s_\epsilon\big)_{\epsilon}$ is compact in $L^\infty_T(H^{-k-\delta}_{\rm loc})$, for arbitrarily small $\de>0$,
so that it converges strongly to some $s$ in that space. Finally, we remark that we must have $s = 0$, because $\epsilon^{-\theta}s_\epsilon = \epsilon^{1 - \theta} \sigma_\epsilon \tend 0$
in $\mathcal{D}'$.

Next, Corollary \ref{c:product} gives continuity of the function product in the $H^{-k} \times H^1 \longrightarrow H^{-k-\delta}$ topology, for arbitrarily small $\delta > 0$, whence the uniform bound
\begin{equation*}
\left( \epsilon^{-\theta}\, s_\epsilon\, u_\epsilon \right)_{\epsilon > 0}\, \subset\, L^2_T (H^{-k-\delta})\,,
\end{equation*}
for all fixed $T>0$.
In addition, using the strong convergence $\epsilon^{-\theta}\,s_\epsilon\,\tend\, 0$ in  $L^\infty_T(H^{-k-\delta}_{\rm loc})$, we get the weak convergence
$\epsilon^{-\theta}\,s_\epsilon\,u_\epsilon \,\wtend\, 0$ in $L^2_T (H^{-k-\delta}_{\rm loc})$.
\end{proof}

\section{Convergence} \label{s:convergence}

In this section we complete the proof of Theorems \ref{THQH} and \ref{THNH}, up to the uniqueness part of the former statement (which will be considered in Section \ref{s:limit-system} below).
Subsections \ref{SecMag} and \ref{SecVisc} are common to both the quasi-homogeneous and the fully non-homogeneous case.
In Subsection \ref{SecQH} we take care of the convective term in the quasi-homogeneous case.
Subsections \ref{ss:fully-nh} and \ref{ss:fully-conclusion} are dedicated to the fully non-homogeneous case. Discussion first bears on the convergence of the convective term,
which is the most involved part of the proof; lastly, we handle the Coriolis term. In both parts, we thoroughly exploit the decomposition
$\rho_\epsilon = \rho_0 + \epsilon \sigma_\epsilon$ and the vorticity form of the momentum equation.

\subsection{The magnetic field} \label{SecMag}

In this section, we take care of all the terms containing the magnetic fields $b_\veps$, except for the resistivity term $\nabla^\perp\big(\mu(\rho_\eps)\,\curl b_\eps\big)$.
As in the previous section with the continuity equation, we use the magnetic field equation to trade space regularity against time compactness.

\begin{prop}\label{PropMag}
We have the following strong convergence for the magnetic fields: up to the extraction of a suitable subsequence, for any $0 < s < 1$ we have
\begin{equation*}
b_\epsilon\, \tend_{\epsilon \rightarrow 0^+}\,b \qquad \text{ in }\quad L^2_{\rm loc}\big(\R_+;H^s_{\rm loc}(\Omega)\big)\,.
\end{equation*}
In particular, we deduce the convergence of all bilinear terms involving the magnetic field:
\begin{equation*}
\D \big(b_\epsilon \otimes b_\epsilon\big)\, \tend\, \D \big(b \otimes b\big) \qquad \text{ in }\quad \mathcal{D}'\big(\R_+\times \Omega\big)\,,
\end{equation*}
and analogous convergence properties hold for $\div\big(u_\epsilon \otimes b_\epsilon\big)$ and $\div\big(b_\epsilon \otimes u_\epsilon\big)$.
\end{prop}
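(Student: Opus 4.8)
The plan is to upgrade the weak convergence $b_\eps\wtend b$ to a strong one by an Aubin--Lions--Simon compactness argument, using the magnetic field equation of \eqref{MHD1} to trade spatial regularity for time compactness, exactly in the spirit of Subsection \ref{SecStrDen}.

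First I would recall the spatial bounds already available: by \eqref{ub:u-b}--\eqref{ub:nabla_u-b}, the sequence $\big(b_\eps\big)_\eps$ is bounded in $L^\infty_T(L^2)\cap L^2_T(H^1)$ for every $T>0$. Next I would bound $\partial_t b_\eps$ by inspecting each term in the equation $\partial_t b_\eps=-\,\D(u_\eps\otimes b_\eps)+\div(b_\eps\otimes u_\eps)+\nabla^\perp\big(\mu(\rho_\eps)\curl b_\eps\big)$. Since $\rho_\eps$ is uniformly bounded by \eqref{ub:rho_e} and $\mu$ is continuous, $\mu(\rho_\eps)$ is bounded in $L^\infty$, so $\mu(\rho_\eps)\,\curl b_\eps$ is bounded in $L^2_T(L^2)$ and hence $\nabla^\perp\big(\mu(\rho_\eps)\curl b_\eps\big)$ is bounded in $L^2_T(H^{-1})$. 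As for the transport terms, H\"older's inequality together with $u_\eps\in L^2_T(L^2)$ and $b_\eps\in L^\infty_T(L^2)$ gives that $u_\eps\otimes b_\eps$ and $b_\eps\otimes u_\eps$ are bounded in $L^2_T(L^1)$, hence in $L^2_T(H^{-1-\delta})$ via the embedding $L^1\subset H^{-1-\delta}$ valid in dimension $2$ (dual to $H^{1+\delta}\subset L^\infty$). Therefore $\big(\partial_t b_\eps\big)_\eps$ is bounded in $L^2_T(H^{-2-\delta})$ for every $T>0$ and every $\delta>0$.

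With the bounds $\big(b_\eps\big)_\eps\subset L^2_T(H^1)$ and $\big(\partial_t b_\eps\big)_\eps\subset L^2_T(H^{-2-\delta})$ at hand, I would multiply by a space cut-off $\chi\in\mathcal D(\Omega)$ (harmless, since $\chi$ is time-independent) and apply the Aubin--Lions--Simon lemma on a fixed ball containing $\Supp\chi$, using that $H^1\hookrightarrow H^s$ is compact there for every $s<1$: this yields relative compactness of $\big(b_\eps\big)_\eps$ in $L^2_T(H^s_{\rm loc})$ for every $s<1$ and every $T>0$. The localisation in space is needed only to cover the case $\Omega=\R^2$. By uniqueness of weak limits, the limit of any strongly converging subsequence coincides with the $b$ already identified in Subsection \ref{SecUB}; a diagonal extraction over $s\uparrow1$ and $T\uparrow+\infty$ then produces a single subsequence along which $b_\eps\to b$ strongly in $L^2_{\rm loc}\big(\R_+;H^s_{\rm loc}(\Omega)\big)$ for all $s<1$.

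For the bilinear terms it suffices to use the above with, say, $s=0$. Writing $b_\eps\otimes b_\eps-b\otimes b=(b_\eps-b)\otimes b_\eps+b\otimes(b_\eps-b)$ and testing against $\phi\in\mathcal D(\R_+\times\Omega)$, the strong convergence $b_\eps\to b$ in $L^2_{\rm loc}(L^2_{\rm loc})$ combined with the uniform bound $\big(b_\eps\big)_\eps\subset L^\infty(L^2)$ gives, by Cauchy--Schwarz, $b_\eps\otimes b_\eps\to b\otimes b$ in $\mathcal D'$, hence $\D(b_\eps\otimes b_\eps)\to\D(b\otimes b)$ in $\mathcal D'$. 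For the mixed terms, the same strong convergence of $\big(b_\eps\big)_\eps$ together with the weak convergence $u_\eps\wtend u$ in $L^2_{\rm loc}(L^2_{\rm loc})$ from \eqref{conv:u-b} yields $u_\eps\otimes b_\eps\to u\otimes b$ and $b_\eps\otimes u_\eps\to b\otimes u$ in $\mathcal D'$, whence the analogous convergence of their divergences. The main obstacle I anticipate is purely technical: carrying out the time-regularity estimate on $\partial_t b_\eps$ so that the transport terms land in a Sobolev space of sufficiently mild negative index, and stating the compactness lemma in the correct local-in-space form; once this is set up, everything else is routine.
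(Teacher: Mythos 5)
Your argument is correct and takes essentially the same approach as the paper: both use the magnetic field equation to bound $\partial_t b_\eps$ in a negative-order Sobolev space and then combine this time compactness with the uniform $L^2_T(H^1)$ bound to get strong convergence in $L^2_T(H^s_{\rm loc})$, $s<1$, before passing to the limit in the bilinear terms. The only cosmetic difference is that the paper estimates the transport terms via Gagliardo--Nirenberg (so that $u_\eps\otimes b_\eps\in L^{4/3}_T(L^2)$ and $\partial_t b_\eps\in L^{4/3}_T(H^{-1})$) and concludes by a H\"older-in-time bound, Ascoli--Arzel\`a and Sobolev interpolation, whereas you use the cruder embedding $L^1\subset H^{-1-\delta}$ and the Aubin--Lions--Simon lemma; both routes are fine.
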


\begin{proof}
Let $T > 0$ be a fixed positive time. Recall that we have $(b_\epsilon)_{\epsilon} \subset L^\infty(L^2)$, as well as $(u_\epsilon)_{\epsilon}, (b_\epsilon)_{\epsilon} \subset L^2_T(H^1)$.
The magnetic field equation reads
\begin{equation*}
\partial_t b_\epsilon\, =\, \D\big(b_\epsilon \otimes u_\epsilon\, -\, u_\epsilon \otimes b_\epsilon\big)\, +\, \nabla^\perp \big( \mu(\rho_\epsilon)\, \curl (b_\epsilon) \big)\,.
\end{equation*}
Gagliardo-Nirenberg inequality of Lemma \ref{GN}
gives $\big(b_\epsilon\big)_{\epsilon} \subset L^4_T(L^4)$ and $\big(u_\epsilon\big)_{\epsilon} \subset L^2_T(L^4)$.
Hence $\big(u_\epsilon \otimes b_\epsilon\big)_{\epsilon} \subset L^{4/3}_T(L^2)$ and $\big(\partial_t b_\epsilon\big)_{\epsilon}\subset L^{4/3}_T (H^{-1})$.
As a result, we get the H\"older bound $\big(b_\epsilon\big)_{\epsilon} \subset C^{0, 3/4}_T(H^{-1})$, and the Ascoli-Arzel\`a theorem gives compactness of
$\big(b_\epsilon\big)_{\epsilon}$ in e.g. $C^0_T(H^{-1 - \delta}_{\rm loc})$, for any small $\delta > 0$. Using the uniform bound in $L^2_T(H^1)$ and Sobolev interpolation gives strong convergence
of the magnetic fields $\big(b_\epsilon\big)_\eps$ in $L^2_T(H^s_{\rm loc})$, for any $0\leq s<1$.

As a consequence, we gather strong convergence of the tensor products $\big(b_\epsilon \otimes b_\epsilon\big)_\eps$ in, say, $L^1_T(L^1_{\rm loc})$, which implies in particular that
$\D \big(b_\epsilon \otimes b_\epsilon\big)\, \tend\, \D \big(b \otimes b\big)$ in $\mathcal{D}'\big(\,]0, T[ \,\times \Omega\big)$.
In an analogous way, we can achieve weak convergence of the mixed tensor products $b_\epsilon \otimes u_\epsilon$ and $u_\epsilon \otimes b_\epsilon$:
for the sake of brevity, we omit the details here.
\end{proof}

Note that this proposition does not complete the study of the magnetic field equation: the convergence of the resistivity term $\nabla^\perp \big( \mu(\rho_\epsilon)\, \curl (b_\epsilon) \big)$ still remains.
This is the goal of the next paragraph.

\subsection{The viscosity and resistivity terms}\label{SecVisc}

In this section, we take care of the convergence of the viscosity and resistivity terms, namely $\D\big(\nu(\rho_\epsilon) \nabla u_\epsilon\big)$ and
$\nabla^\perp \big( \mu(\rho_\epsilon)\, \curl (b_\epsilon) \big)$ respectively.
Remember that we have taken $\nu$ and $\mu$ to be continuous on $\mathbb{R}_+$.

\begin{prop}\label{PropVisc}
The following convergence of the viscosity and resistivity terms holds true, in the sense of $\mathcal{D}'(\mathbb{R}_+ \times \Omega)$: 
\begin{equation*}
\D\big(\nu(\rho_\epsilon) \nabla u_\epsilon\big)\, \tend\, \D\big(\nu(\rho_0) \nabla u\big)\qquad\mbox{ and }\qquad
\nabla^\perp \big( \mu(\rho_\epsilon)\, \curl (b_\epsilon) \big)\, \tend\, \nabla^\perp \big( \mu(\rho_0)\, \curl (b) \big)\,,
\end{equation*}
where $\rho_0$ is either $1$ (in the quasi-homogeneous case) or the truly variable profile satisfying the assumptions of Theorem \ref{THNH} (in the fully-non-homogeneous case).
\end{prop}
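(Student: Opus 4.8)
The plan is to reduce both convergences to a single mechanism: strong convergence of the coefficients $\nu(\rho_\eps)$, $\mu(\rho_\eps)$, paired against the weak $L^2$ convergence of $\nabla u_\eps$, respectively $\curl b_\eps$.

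First I would upgrade the strong convergence of the densities into strong convergence of $\nu(\rho_\eps)$ and $\mu(\rho_\eps)$. By Proposition \ref{PropStrCnv}, up to extracting a subsequence we have $\rho_\eps \to \rho$ almost everywhere in $\R_+\times\Omega$, and here $\rho$ coincides with the reference profile $\rho_0$: this is immediate in the quasi-homogeneous case (where $\rho_\eps = 1 + \eps r_\eps$ and $(r_\eps)_\eps$ is bounded in $L^\infty(L^2\cap L^\infty)$, so $\eps r_\eps \to 0$ there), and it is exactly point 2) of Proposition \ref{PropDebut} in the fully non-homogeneous case. Since $\nu$ and $\mu$ are continuous on $\R_+$, it follows that $\nu(\rho_\eps) \to \nu(\rho_0)$ and $\mu(\rho_\eps) \to \mu(\rho_0)$ almost everywhere in $\R_+\times\Omega$. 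Moreover, by the uniform bound \eqref{ub:rho_e} all the $\rho_\eps$ take values in the fixed compact interval $[0,2\rho^*]$, on which $\nu$ and $\mu$ are bounded; hence $(\nu(\rho_\eps))_\eps$ and $(\mu(\rho_\eps))_\eps$ are uniformly bounded in $L^\infty(\R_+\times\Omega)$. Dominated convergence on compact subsets then yields, for every $T>0$, every compact $K\subset\Omega$ and every $1\le p<+\infty$,
\[
\nu(\rho_\eps)\,\tend\,\nu(\rho_0)\qquad\text{and}\qquad\mu(\rho_\eps)\,\tend\,\mu(\rho_0)\qquad\text{in }\ L^p\big([0,T]\times K\big)\,.
\]

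Next I would pass to the limit in the weak formulations. Fix a test function $\phi\in\mathcal{D}(\R_+\times\Omega;\R^2)$, supported in $[0,T]\times K$. After one integration by parts, the viscosity term tested against $\phi$ equals, up to sign, the $L^2([0,T]\times K)$ pairing $\big\langle\nabla u_\eps\,,\,\nu(\rho_\eps)\,\nabla\phi\big\rangle$. Since $\nabla\phi$ is bounded with compact support, the strong convergence above gives $\nu(\rho_\eps)\,\nabla\phi\to\nu(\rho_0)\,\nabla\phi$ strongly in $L^2([0,T]\times K)$, while $\big(\nabla u_\eps\big)_\eps$ is bounded in $L^2([0,T]\times K)$ by \eqref{ub:nabla_u-b} and $\nabla u_\eps\wtend\nabla u$ weakly there by \eqref{conv:u-b}. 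The pairing of a weakly convergent sequence with a strongly convergent one converges to the pairing of the limits, so $\big\langle\nabla u_\eps,\nu(\rho_\eps)\nabla\phi\big\rangle\to\big\langle\nabla u,\nu(\rho_0)\nabla\phi\big\rangle$, which is the claim for the viscosity term. The resistivity term is handled identically: testing $\nabla^\perp\big(\mu(\rho_\eps)\curl b_\eps\big)$ against $\phi$ produces, up to sign, the pairing $\big\langle\curl b_\eps\,,\,\mu(\rho_\eps)\,\curl\phi\big\rangle$, and one concludes from the strong convergence of $\mu(\rho_\eps)$ together with $\curl b_\eps\wtend\curl b$ weakly in $L^2([0,T]\times K)$ (again a consequence of \eqref{ub:nabla_u-b} and \eqref{conv:u-b}).

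No step here is genuinely difficult: the only place where the heavy machinery of the previous sections enters is the almost-everywhere convergence $\rho_\eps\to\rho_0$, which combines Proposition \ref{PropStrCnv} with the identification of the limit density in Proposition \ref{PropDebut} and ultimately rests on the Di Perna--Lions uniqueness theory. Once that is in hand, the continuous functional calculus and the uniform $L^\infty$ bound promote it to strong $L^p_{\rm loc}$ convergence of $\nu(\rho_\eps),\mu(\rho_\eps)$ at no cost, and the passage to the limit in the products is the standard ``strong times weak'' argument. The one subtlety worth spelling out explicitly is that one must keep the coefficient $\nu(\rho_\eps)$ (respectively $\mu(\rho_\eps)$) grouped with the fixed, bounded, compactly supported test function rather than with $\nabla u_\eps$ (respectively $\curl b_\eps$), so that only weak $L^2$ convergence of the latter—which is all that is available—is ever used.
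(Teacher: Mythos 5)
Your proof is correct and follows essentially the same route as the paper: strong $L^2_{\rm loc}$ convergence of $\nu(\rho_\eps)$, $\mu(\rho_\eps)$ (obtained from Propositions \ref{PropStrCnv} and \ref{PropDebut}, the uniform $L^\infty$ bound and dominated convergence) paired against the weak $L^2$ convergence of $\nabla u_\eps$ and $\curl b_\eps$. The only cosmetic difference is that you treat both regimes uniformly via the a.e.\ convergence and group the coefficient with the test function, whereas the paper handles the quasi-homogeneous case separately using the uniform convergence $\nu(\rho_\eps)\to\nu(1)$ in $L^\infty(L^\infty)$ and writes the difference as two explicit error terms; the underlying ``strong times weak'' mechanism is identical.
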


\begin{proof}
We only prove the convergence of the viscosity term, the one of the resistivity term being, in all that matters, identical.

In the case where the density is quasi-homogeneous, we already have strong convergence
$\rho_\epsilon \tend 1$ in $L^\infty(L^\infty)$, because $\big(r_\epsilon\big)_{\epsilon}$ is bounded in $L^\infty(L^2 \cap L^\infty)$. This makes the viscosity term easy to handle:
let $T > 0$ and $\phi \in \mathcal{D}\big([0, T[ \, \times \Omega ; \mathbb{R}^2\big)$ be a test function, then
\begin{align*}
&\int_0^{T} \int_\Omega \nu (\rho_\epsilon) \nabla u_\epsilon : \nabla \phi\, \dx \dt - \int_0^{T} \int_\Omega \nu (1) \nabla u : \nabla \phi\, \dx \dt \\
&\qquad\qquad =\int_0^{T} \int_\Omega \big( \nu (\rho_\epsilon) - \nu(1) \big) \nabla u_\epsilon : \nabla \phi\, \dx \dt +
\int_0^{T} \int_\Omega \nu(1) \big( \nabla u_\epsilon  - \nabla u \big) : \nabla \phi\, \dx \dt\,.
\end{align*}
The second integral has limit zero, because of the weak convergence of the $u_\epsilon$ in $L^2_T(H^1)$. As for the first integral,
uniform convergence of the $\rho_\epsilon$ and continuity of $\nu$ gives
\begin{align*}
\left|   \int_0^{T} \int_\Omega \big( \nu (\rho_\epsilon) - \nu(1) \big) \nabla u_\epsilon : \nabla \phi \right|&\leq\left\| \nu(\rho_\epsilon) - \nu(1)\right\|_{L^\infty(L^\infty)}\,
\| \nabla u_\epsilon \|_{L^2_T(L^2)}\, \| \nabla \phi \|_{L^2_T(L^2)}\,
\tend_{\epsilon \rightarrow 0^+}\, 0\,.
\end{align*}

Obviously, this does not work as well in the fully non-homogeneous case. Hence, we will need to use the strong convergence result of Proposition \ref{PropStrCnv}: namely,
in the limit $\eps\ra0^+$, one has
$\rho_\epsilon \tend \rho_0$ in $L^2_{\rm loc}\big(\R_+\times\Omega\big)$ and almost everywhere in $\mathbb{R}_+ \times \Omega$, where we have used also the information coming from
the second item of Proposition \ref{PropDebut}.

The uniform bounds $\big(\rho_\epsilon\big)_{\epsilon > 0} \subset L^\infty(L^\infty)$ and the continuity of $\nu$ show that the $\nu(\rho_\epsilon)$ are also uniformly bounded in
$L^\infty(L^\infty)$: namely, there is a constant $\nu^* > 0$ such that $ \nu(\rho_\epsilon) \leq \nu^*$ for all $\epsilon > 0$. The dominated convergence theorem then gives
strong convergence of the viscosities: 
\begin{equation} \label{conv:strong_nu}
\nu(\rho_\epsilon)\, \tend_{\epsilon \rightarrow 0^+}\, \nu(\rho_0) \qquad\qquad \text{ in }\qquad L^2_{\rm loc}\big(\R_+\times\Omega\big)\,.
\end{equation}
Now, let $T>0$ and $K \subset\Omega$ be a compact set. For any $\phi \in \mathcal{D}\big([0, T[\, \times K\big)$, we can estimate
\begin{align*}
&\left| \int_0^T \int_\Omega \nu(\rho_\epsilon) \nabla u_\epsilon \cdot \nabla \phi\, \text{d}x \text{d}t -\int_0^T \int_\Omega \nu(\rho_0) \nabla u \cdot \nabla \phi\,\text{d}x \text{d}t \right| \\
&\qquad \leq \left|\int_0^T \int_\Omega \nu(\rho_0)\, \big(\nabla u_\epsilon - \nabla u\big) \cdot \nabla \phi \, \text{d}x \text{d}t \right|\,+\,
\left|  \int_0^T \int_\Omega \big( \nu(\rho_\epsilon) - \nu(\rho_0) \big)\, \nabla u_\epsilon \cdot \nabla \phi \, \text{d}x \text{d}t  \right|\,.
\end{align*}
The first intergral obviously tends to zero as $\epsilon \rightarrow 0^+$, because of weak convergence $\nabla u_\epsilon \wtend \nabla u$ in $L^2_T(L^2)$.
The second integral, instead, can be bounded as follows:
\begin{equation*}
\left|  \int_0^T \int_\Omega \big( \nu(\rho_\epsilon) - \nu(\rho_0) \big)\, \nabla u_\epsilon \cdot \nabla \phi \, \text{d}x \text{d}t  \right|\,\leq\,
\| \nabla \phi \|_{L^\infty(L^\infty)}\, \| \nu(\rho_\epsilon) - \nu(\rho_0) \|_{L^2_T(L^2(K))}\, \| \nabla u_\epsilon \|_{L^2(L^2)}\,,
\end{equation*}
where the quantity on the right-hand side goes to $0$ in view of \eqref{conv:strong_nu}.

The proposition is now proved.
\end{proof}

\subsection{The convective term: the quasi-homogeneous case}\label{SecQH}

In the slightly non-homogeneous case, the convective term $\D (\rho_\epsilon u_\epsilon \otimes u_\epsilon)$ is the last one we have to study.
The argument is in three steps. First of all, we reduce the problem to the study of $\D(u_\epsilon \otimes u_\epsilon)$, taking advantage of the approximation
$\rho_\epsilon \approx 1$. Then, we use the uniform $H^1$ regularity to find an approximation of $u_\epsilon$ by smooth functions, which we will need for the last step,
a compensated compactness argument.

\begin{prop}
For all divergence-free $\phi \in \mathcal{D}\big(\mathbb{R}_+ \times \Omega ; \mathbb{R}^2\big)$, one has
\begin{equation*}
\int_0^{+ \infty} \int_\Omega \rho_\epsilon\, u_\epsilon \otimes u_\epsilon : \nabla \phi\, \dx\dt\, \tend_{\epsilon \rightarrow 0^+}\, \int_0^{+ \infty} \int_\Omega u \otimes u : \nabla \phi\, \dx \dt\,.
\end{equation*}
\end{prop}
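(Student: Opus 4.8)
The plan is to follow the three-step scheme announced before the statement. \emph{First}, I would use the quasi-homogeneity to reduce the convective term to $\D(u_\epsilon\otimes u_\epsilon)$. Writing $\rho_\epsilon = 1 + \epsilon r_\epsilon$, we have $\rho_\epsilon u_\epsilon\otimes u_\epsilon = u_\epsilon\otimes u_\epsilon + \epsilon\,r_\epsilon\,u_\epsilon\otimes u_\epsilon$. By the uniform bounds of Subsection~\ref{SecUB}, in the quasi-homogeneous case $(u_\epsilon)_\epsilon$ is bounded in $L^\infty(L^2)\cap L^2_T(H^1)$, hence by the Gagliardo--Nirenberg inequality of Lemma~\ref{GN} in $L^4_T(L^4)$; therefore $(u_\epsilon\otimes u_\epsilon)_\epsilon\subset L^2_T(L^2)$, and since $(r_\epsilon)_\epsilon\subset L^\infty(L^\infty)$ also $(r_\epsilon u_\epsilon\otimes u_\epsilon)_\epsilon\subset L^2_T(L^2)$. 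Consequently $\epsilon\int_0^{+\infty}\int_\Omega r_\epsilon u_\epsilon\otimes u_\epsilon:\nabla\phi = O(\epsilon)$, and it suffices to prove that $\int_0^{+\infty}\int_\Omega u_\epsilon\otimes u_\epsilon:\nabla\phi\to\int_0^{+\infty}\int_\Omega u\otimes u:\nabla\phi$.

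\emph{Second} — and this is the crux — I would upgrade the weak convergence \eqref{conv:u-b} of the velocities to strong convergence $u_\epsilon\to u$ in $L^2_T(L^2_{\rm loc})$. The key observation is that, in two dimensions, $u_\epsilon^\perp$ is curl-free ($\curl(u_\epsilon^\perp) = \div u_\epsilon = 0$), hence a gradient, so that applying the Leray projector $\mathbb P$ to the momentum equation annihilates both $\nabla\pi_\epsilon$ and the singular term $\epsilon^{-1}u_\epsilon^\perp$, exactly as in the proof of Proposition~\ref{PropDebut}. Since $\mathbb P u_\epsilon = u_\epsilon$, one has $\mathbb P(\rho_\epsilon u_\epsilon) = u_\epsilon + \epsilon\,\mathbb P(r_\epsilon u_\epsilon)$, and
\begin{equation*}
\partial_t\,\mathbb P\big(\rho_\epsilon u_\epsilon\big)\,=\,\mathbb P\,\D\big(\nu(\rho_\epsilon)\nabla u_\epsilon\,+\,b_\epsilon\otimes b_\epsilon\,-\,\rho_\epsilon u_\epsilon\otimes u_\epsilon\big)\,-\,\mathbb P\big(r_\epsilon u_\epsilon^\perp\big)\,,
\end{equation*}
whose right-hand side is bounded in $L^2_T(H^{-1}) + L^\infty_T(L^2)$ uniformly in $\epsilon$, by Step~1, \eqref{ub:u-b}--\eqref{ub:nabla_u-b} and the continuity of $\nu$. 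I would then fix the Littlewood--Paley low-frequency cut-off $S_N$ (onto frequencies $\lesssim 2^N$): for fixed $N$, the sequence $\big(S_N\mathbb P(\rho_\epsilon u_\epsilon)\big)_\epsilon$ is bounded in $L^\infty_T(H^1)$ and its time derivative is bounded in $L^2_T(L^2)$, so, by the Ascoli--Arzel\`a theorem (as in the treatment of $\rho_\epsilon^2$ in the proof of Lemma~\ref{l:g}, the spatial regularity now being supplied by $S_N$), one obtains, along a subsequence, strong convergence in $L^2_T(L^2_{\rm loc})$; and since $\epsilon\,S_N\mathbb P(r_\epsilon u_\epsilon)\to0$ in $L^\infty_T(L^2)$, the same holds for $\big(S_N u_\epsilon\big)_\epsilon$. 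On the other hand $\|u_\epsilon - S_N u_\epsilon\|_{L^2_T(L^2)}\lesssim 2^{-N}\|u_\epsilon\|_{L^2_T(H^1)}\to 0$ as $N\to+\infty$, uniformly in $\epsilon$. A diagonal extraction over $N$ and over a compact exhaustion of $\Omega$ then shows that $\big(u_\epsilon\big)_\epsilon$ is Cauchy in $L^2_T(L^2(K))$ for every compact $K\subset\Omega$, hence converges strongly; by \eqref{conv:u-b} the limit is $u$.

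\emph{Third}, I would conclude: from the strong convergence $u_\epsilon\to u$ in $L^2_T(L^2_{\rm loc})$ and the uniform $L^4_T(L^4)$ bound, $u_\epsilon\otimes u_\epsilon\to u\otimes u$ in $L^1_T(L^1_{\rm loc})$, and pairing against the bounded, compactly supported $\nabla\phi$, together with Step~1, yields the assertion. The main obstacle is the time-compactness of $\big(u_\epsilon\big)_\epsilon$ in the second step: it rests on the two-dimensional structural cancellation that makes $u_\epsilon^\perp$ a perfect gradient — so that the $\epsilon^{-1}$ Coriolis contribution is harmless after the Leray projection — combined with the Littlewood--Paley truncation, which compensates the loss of spatial regularity coming from $\mathbb P$ and the quadratic terms before the Ascoli--Arzel\`a argument can be applied.
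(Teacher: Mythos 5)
Your proof is correct, but it follows a genuinely different route from the paper's. The paper runs a compensated compactness argument on the \emph{vorticity}: after the same reduction to $u_\epsilon\otimes u_\epsilon$ and the same $S_j$--truncation with the uniform tail bound $\|(I-S_j)u_\epsilon\|_{L^2_T(L^2)}\lesssim 2^{-j}$, it writes $\D(u_{\epsilon,j}\otimes u_{\epsilon,j})=\tfrac12\nabla|u_{\epsilon,j}|^2+\omega_{\epsilon,j}\,u_{\epsilon,j}^\perp$, obtains time-compactness of the truncated momentum vorticity $\eta_{\epsilon,j}=S_j\curl(\rho_\epsilon u_\epsilon)$ from the \emph{curled} momentum equation (the singular term drops because $\curl(u_\epsilon^\perp)=\div u_\epsilon=0$), and concludes by pairing the strongly convergent $\omega_{\epsilon,j}$ with the merely weakly convergent $u_{\epsilon,j}^\perp$. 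You instead prove outright strong convergence $u_\epsilon\to u$ in $L^2_T(L^2_{\rm loc})$ via an Aubin--Lions-type argument on the Leray projection of the momentum. Both arguments rest on the identical two-dimensional cancellation ($u_\epsilon^\perp$ curl-free, equivalently $\mathbb{P}(u_\epsilon^\perp)=0$, because $\div u_\epsilon=0$) and both use $S_N$ plus Ascoli--Arzel\`a; but yours delivers a strictly stronger intermediate statement (strong compactness of the velocities, which could in fact be appended to the conclusions of Theorem \ref{THQH}), whereas the paper's vorticity formulation is the one that survives in the fully non-homogeneous regime, where $\epsilon^{-1}\rho_0u_\epsilon^\perp$ is no longer killed by $\mathbb{P}$ — presumably why the authors present the quasi-homogeneous case in that same framework. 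Two small caveats, neither fatal: on $\T^2$ the identity $\mathbb{P}(u_\epsilon^\perp)=0$ misses the zero Fourier mode (the spatial mean of $u_\epsilon^\perp$), so a word on the mean is needed there — though the paper's own claim that $u_\epsilon^\perp$ is a ``perfect gradient'' carries the same caveat; and passing from the weak formulation (tested only on divergence-free, compactly supported fields) to the projected equation requires the usual de Rham/density step, which the paper also performs implicitly when writing \eqref{Cor1}.
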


\begin{proof}

Let $T > 0$ be a fixed positive time. To start the proof, recall that we can write $\rho_\epsilon = 1 + \epsilon r_\epsilon$, with $(r_\epsilon)_{\epsilon} \subset L^\infty (L^2 \cap L^\infty)$.
Then, by virtue of the the $L^2_T(L^2)$ uniform boundedness of $\big(u_\eps\big)_\eps$, for all divergence-free $\phi \in \mathcal{D}([0, T[ \times \Omega ; \mathbb{R}^2)$ we have
\begin{equation}\label{App1}
\left| \int_0^T \int_\Omega \big(  \rho_\epsilon u_\epsilon \otimes u_\epsilon - u_\epsilon \otimes u_\epsilon \big) : \nabla \phi\, \dx \dt  \right|\, \tend_{\epsilon \rightarrow 0^+} \,0\,.
\end{equation}

Next, we seek a uniform approximation of $u_\epsilon$ by a smooth function (that is, smooth in the space variable). Let $S_j$ be the low-frequency cut-off operator from the
Littlewood-Paley decomposition given by \eqref{eq:S_j} in the appendix. Then, using the uniform bound $(u_\epsilon)_{\epsilon} \subset L^2_T(H^1)$ and the characterisation \eqref{eq:LP-Sob}
of Sobolev spaces, it immediately follows that
\begin{equation*}
\big\| \big(I - S_j\big) u_\epsilon  \big\|_{L^2_T(L^2)}\, \leq\, C\, 2^{-j}\,,
\end{equation*}
for some constant $C > 0$ possibly depending on $T$, but independent of both $j$ and $\epsilon$.
Using the previous bound, we can thus estimate
\begin{multline}\label{Unif}
\left| \int_0^T \int_\Omega \bigg\{  u_\epsilon \otimes u_\epsilon - S_j u_\epsilon \otimes S_j u_\epsilon  \bigg\} : \nabla \phi\, \dx \dt \right| \\
 \leq\, \| \nabla \phi \|_{L^\infty(L^\infty)}  \bigg( \big\| (I - S_j)u_\epsilon \otimes u_\epsilon \big\|_{L^1_T(L^1)} + \big\| S_j u_\epsilon \otimes (I - S_j)u_\epsilon  \big\|_{L^1_T(L^1)}\bigg)
\, \leq \,C\, 2^{-j}\,.
\end{multline}

Therefore, the problem is now to prove convergence of the term
$$
\int_0^T \int_\Omega S_j u_\epsilon \otimes S_j u_\epsilon: \nabla \phi\, \dx \dt\,=\,-\int_0^T \int_\Omega \div\big(S_j u_\epsilon \otimes S_j u_\epsilon\big)\cdot\phi\, \dx \dt
$$
when $\eps\ra0^+$, for any fixed $j\geq-1$. Notice that the integration by parts is justified, since all the quantities are now smooth with respect to the space variable.
For convenience purposes, we henceforth note $u_{\epsilon, j} = S_j u_\epsilon$. Because the operator $S_j$ is a Fourier-multiplier, it commutes with all the partial derivatives: in particular,
$\D (u_{\epsilon, j}) = 0$ and therefore, after denoting $\omega_\eps\,:=\,\curl u_\eps$ and $\omega_{\eps,j}\,:=\,S_j\omega_\eps\,=\,\curl u_{\eps,j}$,  we deduce
\begin{equation}\label{QH1}
\D\big(u_{\epsilon, j} \otimes u_{\epsilon, j}\big)\, =\, \frac{1}{2} \nabla \big| u_{\epsilon, j} \big|^2\, +\, \omega_{\epsilon, j}\, u_{\epsilon, j}^\perp\,.
\end{equation}
The first term in the right-hand side disappears when tested against a divergence-free function, hence we deduce
\begin{equation*}
\int_0^T \int_\Omega u_{\epsilon, j} \otimes u_{\epsilon, j} : \nabla \phi\, \dx \dt\, =\, -\int_0^T \int_\Omega \omega_{\epsilon, j}\, u_{\epsilon, j}^\perp \cdot \phi\, \dx \dt\,.
\end{equation*}
As for the vorticity term, we resort to the reformulation \eqref{Cor1} of the momentum equation, which can be rewritten, in the quasi-homogeneous case, in the following way:
\begin{equation*}
\epsilon\, \partial_t V_\epsilon + \nabla \pi_\epsilon + \frac{1}{2}\,\epsilon\, \nabla |b_\epsilon|^2 + u_\epsilon^\perp\, =\, \epsilon\, \big(f_\epsilon - r_\epsilon u_\epsilon^\perp \big)\,,
\end{equation*}
where $V_\eps$ and $f_\epsilon$ have been defined in \eqref{def:V-f}. Then, applying the operator $S_j$ to the previous equation and taking the $\curl$ gives
\begin{equation*}
\partial_t \eta_{\epsilon, j} \,=\, \curl \bigg(f_{\epsilon, j} - S_j\left(r_\epsilon\, u_\epsilon^\perp\right) \bigg)\,,
\end{equation*}
where we have set $\eta_{\epsilon, j}\,:=\, S_j \curl (V_\epsilon)$ and $f_{\epsilon, j} = S_j f_\epsilon$.
From \eqref{ub:f_e}, we known that $\big(f_\epsilon\big)_\eps$ is bounded in $L^2_T(H^{-2 - \delta})$ for any $\de>0$, and so, for any fixed $j$, the sequence $\big( f_{\epsilon, j} \big)_\epsilon$ is bounded in every $L^2_T(H^m)$, with $m \in \mathbb{R}$. Likewise, because  the sequence$(r_\epsilon u_\epsilon)_\epsilon$ is bounded in $L^2_T (L^2)$, we also  see that $\big( r_\epsilon u_\epsilon^\perp \big)_\epsilon \subset L^2_T(H^m)$ for every $m \in \mathbb{R}$. We deduce a uniform bound for the $\eta_{\epsilon, j}$: for $m\in \mathbb{R}$,
\begin{equation}
\big( \eta_{\epsilon, j} \big)_\epsilon \subset C^{0, 1/2}_T(H^m)\,.
\end{equation}
Hence, the Ascoli-Arzel\`a theorem provides strong convergence (up to the extraction of a subsequence) to some $\eta_j \in L^\infty_{\rm loc}(H^m_{\rm loc})$: more precisely,
for all fixed $T>0$ and $m\in \mathbb{R}$,
\begin{equation*}
\forall j \geq 1, \quad \qquad \eta_{\epsilon, j}\, \tend_{\epsilon \rightarrow 0^+}\, \eta_j \qquad \text{ in }\;L^\infty_T(H^m_{\rm loc})\,.
\end{equation*}
But since we already know that $V_\epsilon\, \wtend\, u$ in $L^2_T(L^2)$, it follows that $\eta_j = \omega_j$. Thanks to the previous strong convergence property,
for fixed $j \geq 1$ and for every $m \in \mathbb{R}$, we get also the strong convergence of $\big(\omega_{\eps,j}\big)$: namely,
\begin{equation*}
\omega_{\epsilon, j}\,=\, \eta_{\epsilon, j}\, -\, \epsilon\, S_j\curl\left(r_\epsilon u_\epsilon\right)\, \tend_{\epsilon \rightarrow 0^+}\,\omega_j \qquad \text{ in }\quad
L^\infty_T(H^m_{\rm loc})\,.
\end{equation*}
Combining this information with the weak convergence $u_\epsilon\, \wtend\, u$  in  $L^2_T(H^1)$ finally yields
\begin{equation*}
\omega_{\epsilon, j}\, u_{\epsilon, j}^\perp\, \tend_{\epsilon \rightarrow 0^+}\, \omega_j\, u_j^\perp \qquad \text{ in }\quad  \mathcal{D}'\big([0, T[\, \times \Omega\big)\,.
\end{equation*}

Thus, we have proved that, for any divergence-free test function $\phi\in\mc D\big([0, T[\, \times \Omega\big)$, we have
\begin{equation*}
\big\langle \D \big(u_{\epsilon, j} \otimes u_{\epsilon, j}\big), \phi \big\rangle_{\mathcal{D}' \times \mathcal{D}}\, =\,
\langle \omega_{\epsilon, j}\, u_{\epsilon, j}^\perp, \phi \rangle_{\mathcal{D}' \times \mathcal{D}}\, \tend_{\epsilon \rightarrow 0^+} \,
\langle \omega_j\, u_j^\perp, \phi \rangle_{\mathcal{D}' \times \mathcal{D}}\, =\, \big\langle \D\big(u_{j} \otimes u_{j}\big), \phi \big\rangle_{\mathcal{D}' \times \mathcal{D}}\,.
\end{equation*}
Now, keeping in mind \eqref{App1}, using the uniform approximation property \eqref{Unif} yields the claimed convergence result.
\end{proof}

\subsection{The convective term: the fully non-homogeneous case} \label{ss:fully-nh}

The main ideas for handling the convective term in the fully non-homogeneous case are very similar to those used for the quasi-homogeneous case, although many complications occur.
Because $\rho_0$ is not constant, the equivalent of decomposition \eqref{QH1} will instead be (omitting for the time being the regularisation argument)
\begin{equation*}
\D \big(\rho_\epsilon u_\epsilon \otimes u_\epsilon\big)\, \approx\, \D \big(\rho_0 u_\epsilon \otimes u_\epsilon\big)\, =\,
\frac{1}{2} \rho_0\, \nabla |u_\epsilon|^2\, +\, \rho_0\, \omega_\epsilon\, u_\epsilon^\perp\, +\, (u_\epsilon \cdot \nabla \rho_0)\, u_\epsilon\,.
\end{equation*}
To simplify those terms, we can no longer rely on the fact that we use divergence-free test functions:
$\langle \rho_0 \nabla |u_\epsilon|^2, \phi \rangle \neq 0$ even when $\D(\phi) = 0$. However, any term of the form $\rho_0 \nabla \Lambda_\epsilon$ or 
$\Lambda_\epsilon \nabla \rho_0$ will give rise to a term of the form $\rho_0 \nabla \Gamma$ in the limit (see below), which can be considered as a ``pressure'' term associated to the constraint
$\div\big(\rho_0\,u\big)=0$.

In the end, we can prove the next statement.
\begin{prop}\label{PropFin}
There is a distribution $\Gamma$ (of order at most one) such that, for all $\phi \in \mathcal{D}\big(\mathbb{R}_+ \times \Omega ; \mathbb{R}^2\big)$ such that $\D(\phi) = 0$, one has
\begin{equation*}
\int_0^{+\infty} \int_\Omega \rho_\epsilon\, u_\epsilon \otimes u_\epsilon : \nabla \phi\, \dx \dt\, \tend_{\epsilon \rightarrow 0^+}\, \big\langle \rho_0 \nabla \Gamma, \phi \big\rangle_{\mc D'\times\mc D}\,.
\end{equation*}
\end{prop}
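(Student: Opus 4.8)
The plan is to follow the three-step scheme of the quasi-homogeneous case of Subsection~\ref{SecQH} --- reduction, regularisation, compensated compactness --- now without any of the simplifications available there. First I would reduce $\rho_\epsilon\,u_\epsilon\otimes u_\epsilon$ to $\rho_0\,u_\epsilon\otimes u_\epsilon$ by writing $\rho_\epsilon\,u_\epsilon\otimes u_\epsilon=\rho_0\,u_\epsilon\otimes u_\epsilon+s_\epsilon\,u_\epsilon\otimes u_\epsilon$ with $s_\epsilon=\rho_\epsilon-\rho_0$: since $s_\epsilon$ is uniformly bounded in $L^\infty(L^\infty)$ and, by Propositions~\ref{PropStrCnv}--\ref{PropDebut}, converges strongly to $0$ in $L^2_{\rm loc}(\R_+\times\Omega)$ --- hence in $L^p_{\rm loc}$ for every finite $p$ --- the uniform bounds on $u_\epsilon$ of Subsection~\ref{SecUB} together with the Gagliardo--Nirenberg inequality of Lemma~\ref{GN} give $s_\epsilon\,u_\epsilon\otimes u_\epsilon\to0$ in $\mc D'$, which is harmless when tested against $\nabla\phi$. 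Then, introducing $u_{\epsilon,j}=S_ju_\epsilon$, the estimate $\|(I-S_j)u_\epsilon\|_{L^2_T(L^2)}\le C\,2^{-j}$ already used in \eqref{Unif}, combined with $\rho_0\in L^\infty$ and $(u_\epsilon)_\epsilon\subset L^2_T(L^2)$, yields
\[
\Bigl|\int_0^T\!\!\!\int_\Omega\rho_0\bigl(u_\epsilon\otimes u_\epsilon-u_{\epsilon,j}\otimes u_{\epsilon,j}\bigr):\nabla\phi\,\dx\dt\Bigr|\ \le\ C\,2^{-j}
\]
uniformly in $\epsilon$, so it is enough to pass to the limit in $\int\rho_0\,u_{\epsilon,j}\otimes u_{\epsilon,j}:\nabla\phi$ for each fixed $j$ and then let $j\to+\infty$.

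\emph{Compensated compactness.} Since $S_j$ commutes with derivatives and $\div u_\epsilon=0$, the analogue of \eqref{QH1} reads
\[
\D\bigl(\rho_0\,u_{\epsilon,j}\otimes u_{\epsilon,j}\bigr)=\tfrac12\,\nabla\bigl(\rho_0\,|u_{\epsilon,j}|^2\bigr)-\tfrac12\,|u_{\epsilon,j}|^2\,\nabla\rho_0+\rho_0\,\omega_{\epsilon,j}\,u_{\epsilon,j}^\perp+(u_{\epsilon,j}\cdot\nabla\rho_0)\,u_{\epsilon,j}\,,
\]
with $\omega_{\epsilon,j}=S_j\curl u_\epsilon$; the perfect gradient drops out against divergence-free $\phi$. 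For the rotational term I would use the vorticity form of the momentum equation: from \eqref{Cor1} and \eqref{def:V-f}, the quantity $\gamma_\epsilon:=\curl(\rho_\epsilon u_\epsilon)-\sigma_\epsilon$ satisfies $\partial_t\gamma_\epsilon=\curl f_\epsilon$, bounded in $L^2_T(H^{-3-\delta})$ by \eqref{Cor3}, while $\gamma_\epsilon$ itself is bounded in $L^\infty_T(H^{-3-\delta})$ (cf.\ \eqref{ub:u-b}, \eqref{ub:rho_e}, Proposition~\ref{PropSigma}); Sobolev interpolation and the Ascoli--Arzel\`a theorem then give, for each fixed $j$, strong convergence of $S_j\gamma_\epsilon$ in $C_T(H^m_{\rm loc})$ for all $m$, to $S_j\bigl(\curl(\rho_0 u)-\sigma\bigr)$. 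Using $\curl(\rho_\epsilon u_\epsilon)=\rho_\epsilon\,\curl u_\epsilon+u_\epsilon\cdot\nabla^\perp\rho_\epsilon$ and $\rho_\epsilon=\rho_0+\epsilon\sigma_\epsilon$ (Proposition~\ref{PropSeps}), I would rewrite $\rho_0\,\omega_{\epsilon,j}$ as $S_j\gamma_\epsilon$, plus $S_j\sigma_\epsilon$, minus $S_j(u_\epsilon\cdot\nabla^\perp\rho_0)$, minus the commutator $[S_j,\rho_0]\curl u_\epsilon$, plus an $O(\epsilon)$ remainder (all under control via \eqref{ub:u-b}, \eqref{ub:rho_e} and Proposition~\ref{PropSigma}). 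In $\rho_0\,\omega_{\epsilon,j}\,u_{\epsilon,j}^\perp$, the piece $S_j\gamma_\epsilon\,u_{\epsilon,j}^\perp$ passes to the limit as (strongly convergent)$\,\times\,$(weakly convergent in $L^2_T(L^2)$), while the pieces carrying $S_j\sigma_\epsilon$ and $u_{\epsilon,j}^\perp$ are treated by plugging the $S_j$-regularised momentum equation back in --- which trades $u_{\epsilon,j}^\perp$ for a pressure gradient modulo $O(\epsilon)$ --- and combining with the remaining terms $(u_{\epsilon,j}\cdot\nabla\rho_0)u_{\epsilon,j}$, $|u_{\epsilon,j}|^2\nabla\rho_0$ and $u_\epsilon\cdot\nabla^\perp\rho_0$. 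The outcome is that $\D(\rho_0\,u_{\epsilon,j}\otimes u_{\epsilon,j})$ converges, as $\epsilon\to0^+$, to a perfect gradient plus a term $\rho_0\nabla\Gamma_j$ plus terms that vanish as $j\to+\infty$.

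\emph{Identification of $\Gamma$.} Passing to the limit, the constraint $\nabla\rho_0\cdot u=0$ --- a consequence of $\div(\rho_0 u)=\div u=0$ (Proposition~\ref{PropDebut}) --- kills the contributions of $(u_j\cdot\nabla\rho_0)u_j$ and $|u_j|^2\nabla\rho_0$ in the limit $j\to+\infty$, and $\Gamma_j$ converges in $\mc D'$ to a distribution $\Gamma$ of order at most one: indeed $\Gamma$ is reconstructed by inverting one elliptic operator from a source containing a single time derivative. Testing against divergence-free $\phi$ and using the $O(2^{-j})$ bound above, one obtains $\int_0^{+\infty}\!\!\int_\Omega\rho_\epsilon\,u_\epsilon\otimes u_\epsilon:\nabla\phi\,\dx\dt\to\langle\rho_0\nabla\Gamma,\phi\rangle_{\mc D'\times\mc D}$ (up to relabelling $\Gamma$), which is the claim.

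\emph{Main obstacle.} The only genuinely hard point is the bilinear term $\rho_0\,\omega_{\epsilon,j}\,u_{\epsilon,j}^\perp$, a product of two sequences neither of which is strongly compact (the vorticities converge only weakly, and the density oscillations $\sigma_\epsilon$ live in the very weak space $H^{-3-\delta}$ of Proposition~\ref{PropSigma}). Handling it is precisely the compensated-compactness argument borrowed from \cite{FG} and \cite{GSR}: one must exhibit the hidden, $\epsilon$-uniformly time-compact quantity $\curl(\rho_\epsilon u_\epsilon)-\sigma_\epsilon$ and then show that every remaining piece is either $o(1)$ or a ($\rho_0$-weighted) gradient, all the while keeping track of the $[S_j,\rho_0]$ commutators and of the $O(\epsilon)$ remainders coming from $\rho_\epsilon=\rho_0+\epsilon\sigma_\epsilon$. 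Assumption~\eqref{NDCP} on the critical set of $\rho_0$ is what ultimately makes the objects obtained by inverting $\div(\rho_0\,\cdot\,)$ --- in particular $\Gamma$ --- well defined.
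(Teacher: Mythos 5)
Your skeleton (reduce $\rho_\epsilon$ to $\rho_0$, regularise with $S_j$, exploit the time compactness of $\curl(\rho_\epsilon u_\epsilon)-\sigma_\epsilon$ coming from the wave system \eqref{Cor1}--\eqref{Cor3}) is the right one, but two of your steps contain genuine gaps. First, the reduction $\rho_\epsilon u_\epsilon\otimes u_\epsilon\approx\rho_0 u_\epsilon\otimes u_\epsilon$ cannot be run with the convergence $s_\epsilon\to0$ in $L^2_{\rm loc}(\R_+\times\Omega)$ (hence in $L^p_{t,x,{\rm loc}}$ for finite $p$) from Proposition \ref{PropStrCnv}. Because of possible vacuum, the only uniform bound on the velocities is $(u_\epsilon)_\epsilon\subset L^2_T(H^1)$, so $u_\epsilon\otimes u_\epsilon$ is bounded only in $L^1$ with respect to time (e.g.\ in $L^1_T(H^{1-\delta})$); H\"older in time then forces you to control $s_\epsilon$ in $L^\infty_T$ of some space, i.e.\ \emph{uniformly in time}, which the DiPerna--Lions argument does not provide ($u_\epsilon\otimes u_\epsilon$ may concentrate on a thin time slab where $s_\epsilon$ has not yet become small). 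This is exactly why Proposition \ref{PropSeps} is needed: interpolating the $C^{0,\gamma}_T(H^{-\gamma})$ bound with the $L^\infty_T(H^{-3-\delta})$ bound on $\sigma_\epsilon$ gives $s_\epsilon\to0$ in $C^{0,\beta}_T(H^{-k}_{\rm loc})$ with $k<1$, which pairs with $u_\epsilon\otimes u_\epsilon\in L^1_T(H^{1-\delta})$ through Lemma \ref{Para}. The same issue undermines your splitting of $\rho_0\,\omega_{\epsilon,j}$: a term such as $\epsilon\,\sigma_\epsilon\,\omega_\epsilon$ is not even a well-defined product ($H^{-3-\delta}$ against $L^2$), and the prefactor $\epsilon$ does not compensate; the fix is again the $\epsilon^\theta$-normalisation of Proposition \ref{PropSeps}, applied at the level of $\rho_\epsilon u_\epsilon$ as in \eqref{Vort}.

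Second, and more seriously, after isolating $S_j\gamma_\epsilon\,u_{\epsilon,j}^\perp$ you are left with genuinely quadratic expressions in $u_{\epsilon,j}$, notably $(u_{\epsilon,j}\cdot\nabla\rho_0)\,u_{\epsilon,j}$ and the term $(u_{\epsilon,j}\cdot\nabla^\perp\rho_0)\,u_{\epsilon,j}^\perp$ produced when commuting $\rho_0$ past the curl. Since $S_j$ regularises in space but gives no time compactness, these are products of two merely weakly convergent sequences: you cannot pass to the limit in them factor by factor and then invoke $\nabla\rho_0\cdot u=0$, as your ``identification of $\Gamma$'' step does when it replaces their limits by $(u_j\cdot\nabla\rho_0)u_j$. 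The missing ingredient is the pointwise algebraic cancellation of Step 5 of the proof: away from the set $\{|\nabla\rho_0|\le 2^{1-j/2}\}$ (whose contribution is a remainder precisely thanks to \eqref{NDCP}), decomposing $u_{\epsilon,j}$ on the orthonormal basis $\big(\nabla\rho_0/|\nabla\rho_0|,\nabla^\perp\rho_0/|\nabla\rho_0|\big)$ shows that $(u_{\epsilon,j}\cdot\nabla\rho_0)u_{\epsilon,j}-(u_{\epsilon,j}\cdot\nabla^\perp\rho_0)u_{\epsilon,j}^\perp$ is \emph{exactly} parallel to $\nabla\rho_0$, hence already of the form $\Lambda_{\epsilon,j}\nabla\rho_0$ \emph{before} any limit is taken; and the residual piece carrying $\eta_{\epsilon,j}\,(u_{\epsilon,j}\cdot\nabla\rho_0)$ is disposed of via $u_{\epsilon,j}\cdot\nabla\rho_0=\D V_{\epsilon,j}+\dots=-\epsilon\,\partial_t\sigma_{\epsilon,j}+\dots$, which turns $\eta_{\epsilon,j}\,\D V_{\epsilon,j}$ into $\epsilon$ times exact time derivatives of quadratic quantities, i.e.\ an $O(\epsilon^\theta)$ remainder. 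Note finally that \eqref{NDCP} enters there, to control the cut-off region near the critical points of $\rho_0$, and not in ``inverting $\D(\rho_0\,\cdot)$'': $\Gamma$ is simply obtained as the distributional limit of the terms of the form $\rho_0\nabla\Lambda^{(1)}_{\epsilon,j}+\Lambda^{(2)}_{\epsilon,j}\nabla\rho_0$ after integrating by parts against the divergence-free test function.
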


The rest of this subsection is devoted to the proof of the previous proposition. Our argument, mainly borrowed from \cite{FG}, consists of several steps.
We start by taking a positive time $T > 0$, a compact set $K\subset\Omega$ and a divergence-free test function $\phi\in\mc D\big([0,T[\,\times K;\R^2\big)$, which we keep fixed throughout all the proof.

\medbreak
\textbf{Step 1: approximation of the densities.} First of all, we justify the approximation $\rho_\epsilon u_\epsilon \otimes u_\epsilon \approx \rho_0 u_\epsilon \otimes u_\epsilon$. 
Note that, because we have accounted for the presence of vacuum, the best uniform bound we have for the velocity field is $\big(u_\epsilon\big)_{\epsilon}\subset L^2_T(H^1)$.
This means that, when estimating the difference $(\rho_\epsilon - \rho_0) u_\epsilon \otimes u_\epsilon$, we must use strong convergence of the densities
\emph{uniformly with respect to time}. In particular, we cannot benefit of the convergence properties proved in Proposition \ref{PropVisc}, which only provides strong convergence
$\rho_\epsilon \tend \rho_0$ in the spaces $L^p_T(L^q_{\rm loc})$ for $1 \leq p, q < +\infty$ (thanks to the uniform bound $0 \leq \rho_\epsilon \leq \rho^*$ and dominated convergence).
Instead, from Proposition \ref{PropSeps} we know that $\rho_\epsilon = \rho_0 + s_\epsilon$, with $s_\epsilon \tend 0$ in $C^{0, \beta}_T(H^{-k}_{\rm loc})$, where $\beta, k \in ]0, 1[$
are as in that proposition (relatively to some $\gamma \in ]0, 1[\,$). 

Firstly, using Corollary \ref{c:product}, we see that, for every $\delta > 0$, the function product is continuous in the $H^1 \times H^1 \longrightarrow H^{1-\delta}$ topology.
This implies that the tensor product $(u_\epsilon \otimes u_\epsilon)_\epsilon$ is bounded in the space $L^1_T(H^{1 - \delta})$, for every $\delta > 0$.
Next, using Lemma \ref{Para}, we get continuity of the product in the $H^{-k} \times H^{1 - \delta} \longrightarrow H^{-k - \delta}$ topology provided that $\delta > 0$ be small enough
(i.e. small enough for $1 - k - \delta$ to be positive). We therefore gather that
\begin{equation*}
\big\| s_\epsilon u_\epsilon \otimes u_\epsilon  \big\|_{L^1_T(H^{-k - \delta})} \leq \big\| s_\epsilon \big\|_{L^\infty_T(H^{-k})} \big\| u_\epsilon \otimes u_\epsilon \big\|_{L^1_T(H^{1 - \delta})}
\tend_{\epsilon \rightarrow 0^+}  0
\end{equation*}
and are reconducted to taking the limit $\epsilon \rightarrow 0^+$ in the integral
$$
\int_0^{T} \int_\Omega \rho_0\, u_\epsilon \otimes u_\epsilon : \nabla \phi\, \dx \dt\,.
$$

At this point, the idea is to resort once again to a compensated compactness argument: for this, we need first to smooth out the velocity fields.

\medbreak
\textbf{Step 2: regularisation.}
We use the same regularisation procedure than in the quasi-homogeneous case: here $S_j$ still is the Littlewood-Paley operator defined by \eqref{eq:S_j} below.
We continue to note $S_j g_\epsilon = g_{\epsilon, j}$ for any sequence of functions $(g_\epsilon)_{\epsilon>0}$, whenever we feel it make things more legible.

We set as above $\eta_\epsilon = \curl V_\epsilon = \curl\big(\rho_\epsilon u_\epsilon\big)$. We now state a simple approximation lemma: its proof is simple, hence omitted. It is enough to recall that
$\big(\eta_\eps)_\eps$ is bounded in $L^\infty_T(H^{-1})$ and that $\big(\sigma_\eps\big)_\eps$
is bounded in every $L^\infty_T(H^{-3-\delta})$, with $\delta > 0$.
\begin{lemma}
The following uniform properties hold, in the limit for $j \rightarrow + \infty$:
\begin{enumerate}[1)]
\item for all $s > 3$, we have $\sup_{\epsilon > 0} \left\| \sigma_\epsilon - S_j \sigma_\epsilon \right\|_{L^\infty_T (H^{-s})}\, \tend\, 0$;
\item for all $s > 1$, we have $\sup_{\epsilon > 0} \left\| \eta_{\epsilon} - S_j \eta_{\epsilon} \right\|_{L^\infty_T(H^{-s})}\, \tend\, 0$.
\end{enumerate}
\end{lemma}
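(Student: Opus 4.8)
The plan is to derive both statements from a single elementary high-frequency estimate, exploiting the fact that $I-S_j$ retains only dyadic frequencies of size $\gtrsim 2^j$, together with the Littlewood-Paley characterisation \eqref{eq:LP-Sob} of Sobolev norms. Concretely, writing $\Delta_k$ for the dyadic blocks, I would first record that, for any tempered distribution $f$ and any real exponents $\alpha<\beta$, one has
\begin{equation*}
\big\|(I-S_j)f\big\|_{H^{-\beta}}^2\,\leq\,C\sum_{k\geq j}2^{-2\beta k}\,\big\|\Delta_k f\big\|_{L^2}^2\,\leq\,C\,2^{-2(\beta-\alpha)j}\sum_{k\geq j}2^{-2\alpha k}\,\big\|\Delta_k f\big\|_{L^2}^2\,\leq\,C\,2^{-2(\beta-\alpha)j}\,\|f\|_{H^{-\alpha}}^2\,,
\end{equation*}
where the middle inequality uses $2^{-2\beta k}\leq 2^{-2(\beta-\alpha)j}2^{-2\alpha k}$ for $k\geq j$ (valid since $\beta-\alpha>0$), and the last one that on the retained frequency block the inhomogeneous and homogeneous $H^{-\alpha}$ norms are equivalent.

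For item 2) this already suffices: since $\big(\eta_\epsilon\big)_\epsilon$ is bounded in $L^\infty_T(H^{-1})$, applying the above with $\alpha=1$ and $\beta=s>1$ gives, for a.e. $t\in[0,T]$, the bound $\|(I-S_j)\eta_\epsilon(t)\|_{H^{-s}}\leq C\,2^{-(s-1)j}\|\eta_\epsilon(t)\|_{H^{-1}}$; taking the essential supremum over $t\in[0,T]$ and then the supremum over $\epsilon>0$ yields $\sup_{\epsilon>0}\|\eta_\epsilon-S_j\eta_\epsilon\|_{L^\infty_T(H^{-s})}\leq C\,2^{-(s-1)j}$, which tends to $0$ as $j\to+\infty$. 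For item 1), given $s>3$ I would first fix $\delta>0$ with $3+\delta<s$; since $\big(\sigma_\epsilon\big)_\epsilon$ is bounded in $L^\infty_T(H^{-3-\delta})$ by Proposition \ref{PropSigma}, the same computation with $\alpha=3+\delta$ and $\beta=s$ produces $\sup_{\epsilon>0}\|\sigma_\epsilon-S_j\sigma_\epsilon\|_{L^\infty_T(H^{-s})}\leq C_\delta\,2^{-(s-3-\delta)j}\to 0$.

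There is no genuine obstacle here — this is exactly why the statement is labelled ``simple'' — and the only point worth keeping in mind is that the uniform bounds on $\big(\sigma_\epsilon\big)_\epsilon$ and $\big(\eta_\epsilon\big)_\epsilon$ are uniform in both $\epsilon$ and $t\in[0,T]$, so that the convergence as $j\to+\infty$ is genuinely uniform in $\epsilon$; this is precisely the form in which the estimate will be invoked in the compensated compactness argument that follows.
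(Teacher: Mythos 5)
Your proof is correct and is exactly the argument the paper has in mind: the authors omit the proof but point to the same two uniform bounds, $\big(\eta_\epsilon\big)_\epsilon\subset L^\infty_T(H^{-1})$ and $\big(\sigma_\epsilon\big)_\epsilon\subset L^\infty_T(H^{-3-\delta})$, and the standard high-frequency estimate $\|(I-S_j)f\|_{H^{-\beta}}\leq C\,2^{-(\beta-\alpha)j}\|f\|_{H^{-\alpha}}$ for $\beta>\alpha$ is the intended way to conclude. Nothing to add.
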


The first new problem we face, after introducing $S_j$, is that $S_j(\rho_\epsilon u_\epsilon) \neq \rho_0 S_j u_\epsilon$, because $\rho_0$ is no longer constant.
With the same notation introduced in Section \ref{SecSig}, we write
\begin{equation} \label{eq:dec_rho-u}
S_j(\rho_\epsilon u_\epsilon) =  S_j(\rho_0 u_\epsilon) + \epsilon^\theta S_j(\epsilon^{-\theta} s_\epsilon u_\epsilon) =
\rho_0 u_{\epsilon, j} + \big[ S_j, \rho_0 \big]u_\epsilon + \epsilon^\theta S_j(\epsilon^{-\theta} s_\epsilon u_\epsilon).
\end{equation}
In the above, $\big[ S_j, \rho_0 \big]$ is the commutator between $S_j$ and the multiplication by $\rho_0$ operator. By Proposition \ref{PropSeps}, we already know that
$\Big(S_j\big(\epsilon^{-\theta} s_\epsilon u_\epsilon\big)\Big)_{\eps} \subset L^2_T(H^s)$ for any $s \geq 0$. To deal with the second summand, we use Lemma \ref{lemma:Comm} on
$\big[ S_j, \rho_0 \big]u_\epsilon$. On the one hand, we have
\begin{equation*}
\left\| \big[ S_j, \rho_0 \big]u_\epsilon \right\|_{L^2_T(L^2)}\, \leq\, \frac{C}{2^j}\, \| \nabla \rho_0 \|_{L^\infty}\, \| u_\epsilon \|_{L^2_T(L^2)}\,\leq\,C\,2^{-j}\,.
\end{equation*}
On the other hand, by differentiating the commutator, we get, for $i \in \{1, 2\}$,
\begin{equation*}
\partial_i \big[ S_j, \rho_0 \big]u_\epsilon = \big[ S_j, \partial_i \rho_0 \big]u_\epsilon + \big[ S_j, \rho_0 \big] \partial_i u_\epsilon\,.
\end{equation*}
Therefore, from Lemma \ref{lemma:Comm} we infer
\begin{equation*}
\left\|   \partial_i \big[ S_j, \rho_0 \big]u_\epsilon  \right\|_{L^2_T(L^2)} \leq
\frac{C}{2^j} \bigg\{  \| \nabla^2 \rho_0 \|_{L^\infty} \| u_\epsilon \|_{L^2_T(L^2)} + \| \nabla \rho_0 \|_{L^\infty} \| \nabla u_\epsilon \|_{L^2_T(L^2)}\bigg\}\,\leq\,C\,2^{-j}\,.
\end{equation*}

Thus, from \eqref{eq:dec_rho-u}, we have obtained the following decomposition of $S_j(\rho_\epsilon u_\epsilon)$:
\begin{equation}\label{Vort}
S_j\big(\rho_\epsilon u_\epsilon\big)\, =\, \rho_0\, u_{\epsilon, j}\, +\, \epsilon^{\theta} \,\zeta_{\epsilon, j}\, + \,h_{\epsilon, j}\,,
\end{equation}
where we have defined $\zeta_{\epsilon, j}\,:=\, S_j\big(\epsilon^{-\theta} s_\epsilon u_\epsilon\big)$ and $h_{\epsilon, j}\, :=\, \big[ S_j, \rho_0 \big]u_\epsilon$. Notice that
\begin{equation}\label{H1Bound}
\forall\,j\geq-1\,,\;\forall\,s\geq0\,,\quad\big(\zeta_{\epsilon, j}\big)_{\eps}\,\subset\, L^2_T(H^s)\qquad\quad\mbox{ and }\qquad\quad
\sup_{\eps>0}\left\| h_{\epsilon, j}\right \|_{L^2_T(H^1)}\,\leq\,C\,2^{-j}\,.
\end{equation}

We make a couple of remarks before going onwards. Firstly, we have seen in Proposition \ref{PropSeps} that $\left( \epsilon^{-\theta} s_\epsilon u_\epsilon \right)_\eps$ is bounded in
$L^2_T(H^{-k-\delta})$, where $0<k<1$ has been fixed in that proposition and $\de>0$ is arbitrarily small. Therefore, we can further write
\begin{equation}\label{EtaDec}
\eta_{\epsilon, j}\, =\, S_j \curl \left[  \rho_0 u_\epsilon\, +\, \epsilon^\theta(\epsilon^{-\theta} s_\epsilon u_\epsilon) \right]\,=\,\eta_{\epsilon, j}^{(1)}\,+\,\epsilon^\theta\,\eta_{\epsilon, j}^{(2)}\,,
\end{equation}
with the uniform bounds (with respect to $\epsilon$)
\begin{equation*}
 \left\| \eta_{\epsilon, j}^{(1)} \right\|_{L^2_T(L^2)}\,\leq\,C_1 \qquad\qquad \text{ and } \qquad\qquad \left\| \eta_{\epsilon, j}^{(2)} \right\|_{L^2_T(H^s)}\, \leq\, C(s, j)\,,
\end{equation*}
for any given $s \geq 0$. Note that the constant $C_1$ does not depend on $\epsilon$ nor on $j$.

Finally, exactly as in the quasi-homogeneous case (keep in mind estimate \eqref{Unif} above), thanks to the uniform approximation properties of $S_j$,
we note that it is enough to prove the convergence of the term
\begin{equation*}
\int_0^T \int_\Omega \rho_0\,u_{\epsilon, j} \otimes u_{\epsilon, j}\,:\, \nabla \phi \, \dx \dt\,=\,
-\int_0^T \int_\Omega \div\big(\rho_0\,u_{\epsilon, j} \otimes u_{\epsilon, j}\big)\cdot \phi\,\dx\dt\,,
\end{equation*}
where the integration by parts is now well-justified, since all the quantities in the integral are smooth in the space variable.

\medbreak
\textbf{Step 3: reformulation.} In view of the previous discussion, we are left with $\D(\rho_0 u_{\epsilon, j} \otimes u_{\epsilon, j})$. Since all functions are smooth, we can write
\begin{equation*}
\D\big(\rho_0 u_{\epsilon, j} \otimes u_{\epsilon, j}\big)\, =\, \big(u_{\epsilon, j} \cdot \nabla \rho_0 \big)\, u_{\epsilon, j}\, +\, \rho_0\, \omega_{\epsilon, j}\, u_{\epsilon, j}^\perp\, +\,
\frac{1}{2} \rho_0 \nabla |u_{\epsilon, j}|^2\,,
\end{equation*}
with $\omega_{\epsilon, j} = S_j \curl(u_\epsilon)$. We remark that the last term in the righthand side of this equation contributes $\rho_0 \nabla \Gamma$ in the limit,
for some distribution $\Gamma$ of order at most one. In the same way, any term of the form $\langle \Lambda_{\epsilon, j} \nabla \rho_0, \phi \rangle$
has a limit of the same form $\langle \rho_0 \nabla \Gamma, \phi \rangle$: since $\div\phi=0$, an integration by parts gives
\begin{equation*}
\int_0^T \int_\Omega \Lambda_{\epsilon, j}\, \nabla \rho_0 \cdot \phi\, \dx \dt\, =\, \int_0^T \int_\Omega  \Lambda_{\epsilon, j}\, \D (\rho_0 \phi)\, \dx \dt\, =\,
- \int_0^T \int_\Omega \rho_0\, \nabla \Lambda_{\epsilon, j} \cdot \phi\, \dx \dt\,.
\end{equation*}
Since all terms of the form $\rho_0 \nabla \Lambda_{\epsilon, j}^{(1)} + \Lambda_{\epsilon, j}^{(2)} \nabla \rho_0$ can be treated in this way, we will generically note any of them by $\Gamma_{\epsilon, j}$.
Likewise, we note $R_{\epsilon, j}$ any remainder term, that is any term such that
\begin{equation*}
\lim_{j\ra+\infty}\,\limsup_{\epsilon\ra0^+} \left|  \int_0^T \int_\Omega R_{\epsilon, j} \cdot \phi\, \dx \,\dt \right|\,=\, 0\,.
\end{equation*}
With that notation, we can write
\begin{equation*}
\D\big(\rho_0\, u_{\epsilon, j} \otimes u_{\epsilon, j}\big)\, =\, \big(u_{\epsilon, j} \cdot \nabla \rho_0 \big)\, u_{\epsilon, j}\, +\, \rho_0\,\omega_{\eps, j}\, u_{\eps, j}^\perp\,+\,\Gamma_{\eps, j}\,.
\end{equation*}
We are now going to deal with the vorticity term, namely the second term in the right-hand side of the previous equation.

\medbreak
\textbf{Step 4: the vorticity term.} By use of \eqref{Vort}, we get
\begin{align*}
\eta_{\epsilon, j}\, & = \,\curl\big(\rho_0\, u_{\epsilon, j}\big)\, +\, \epsilon^\theta\, \curl\zeta_{\epsilon, j} \,+\, \curl h_{\epsilon, j} \\
& = \rho_0\, \omega_{\epsilon, j}\, +\, \nabla^\perp \rho_0 \cdot u_{\epsilon, j}\, +\, \epsilon^\theta\, \curl\zeta_{\epsilon, j}\, +\, \curl h_{\epsilon, j}\,.
\end{align*}
Therefore, by virtue of \eqref{H1Bound}, we gather that
$\rho_0\,\omega_{\epsilon, j}\, u_{\eps, j}^\perp\, =\,\eta_{\eps, j}\, u_{\epsilon, j}^\perp\, -\,\big(u_{\eps, j} \cdot \nabla^\perp \rho_0 \big)\, u_{\eps, j}^\perp\, +\, R_{\epsilon, j}$,
which in turn gives
\begin{equation*}
\D\big(\rho_0\, u_{\epsilon, j} \otimes u_{\epsilon, j}\big)\,=\,\eta_{\epsilon, j}\,u_{\epsilon, j}^\perp\,+\,\big(u_{\eps, j}\cdot\nabla\rho_0\big)\,u_{\eps, j}\,-\,
\big(  u_{\epsilon, j} \cdot \nabla^\perp \rho_0 \big)\, u_{\epsilon, j}^\perp\, +\, \Gamma_{\epsilon, j}\, +\, R_{\epsilon, j}\,.
\end{equation*}

\textbf{Step 5: a geometric property.} Now we focus on the term
$$
X_{\epsilon, j}\,:=\,\big(u_{\epsilon, j} \cdot \nabla \rho_0 \big)\, u_{\epsilon, j}\, -\, \big(  u_{\epsilon, j} \cdot \nabla^\perp \rho_0 \big)\,u_{\epsilon, j}^\perp\,.
$$
The main idea to treat this term is to decompose $u_{\epsilon, j}(t, x)$ in the orthonormal basis of $\mathbb{R}^2$ given by
$\left(  \frac{\nabla \rho_0(x)}{|\nabla \rho_0(x)|}, \frac{\nabla^\perp \rho_0(x)}{|\nabla \rho_0(x)|} \right)$. However, to avoid complications, we have first to deal with those
$x\in\Omega$ for which $|\nabla \rho_0(x)|$ is small. More precisely, let $B \in \mathcal{D}(\mathbb{R}^2)$ be such that
\begin{equation*}
0 \leq B \leq 1 \qquad \text{ and } \qquad
\begin{cases}
B(y) = 1 \quad \text{for } |y| \leq 1 \\[1ex]
B(y) = 0 \quad \text{for } |y| \geq 2\,,
\end{cases}
\end{equation*}
and let $B_j(x) = B \big(  2^{j/2} \nabla \rho_0(x) \big)$. The function $B_j$ is so chosen that $|\nabla \rho_0| \geq 2^{-j/2}$ on $\Supp(1 - B_j)$.
Recall that $\Supp\phi\subset[0,T]\times K$, where $\phi$ is the divergence-free test function we have fixed at the beginning of the argument.
Then, for any $2 < q < +\infty$, H\"older's inequality with $1 = (2/q) + (q-2)/q$ yields
\begin{equation*}
\left\| B_j\, X_{\epsilon, j} \right\|_{L^1_T(L^1(K))}\,\leq\, C\,\left\| X_{\eps, j}\right\|_{L^1_T(L^{q/2})}\, \text{meas}\left\{x\in K\,\big|\;| \nabla \rho_0(x) | \leq 2^{1 - j/2}\right\}^{\!(q-2)/q}\,.
\end{equation*}
Using the Sobolev embedding $H^1 \subset L^q$, we get
\begin{equation*}
\left\| X_{\epsilon, j} \right\|_{L^1_T(L^{q/2})}\, \leq\, C\, \left\| \nabla \rho_0 \right\|_{L^\infty}\, \left\| u_{\epsilon, j} \right\|_{L^2_T(H^1)}^2\,\leq\,C\,.
\end{equation*}
Hence we see that $B_j\,X_{\epsilon, j} = R_{\epsilon, j}$ is a remainder term, thanks to the assumption \eqref{NDCP} on $\rho_0$.

Next, we look at $\left(1 - B_j\right)\,X_{\epsilon, j}$: following the idea explained here above, we get
\begin{align*}
\left(1 - B_j\right)\,u_{\epsilon, j}\,&=\,\frac{1 - B_j}{|\nabla \rho_0|^2}\,\bigg\{\big(u_{\eps, j} \cdot \nabla \rho_0\big)\,\nabla \rho_0\,+\,
\big(u_{\eps, j}\cdot \nabla^\perp \rho_0\big)\, \nabla^\perp \rho_0  \bigg\} \\
\left(1 - B_j\right)\,u_{\epsilon, j}^\perp\,&=\,\frac{1 - B_j}{|\nabla \rho_0|^2}\,\bigg\{-\big(u_{\epsilon, j} \cdot \nabla^\perp \rho_0\big)\,\nabla \rho_0\, +\,
\big(u_{\epsilon, j} \cdot \nabla \rho_0\big)\, \nabla^\perp \rho_0  \bigg\}\,.
\end{align*}
Putting this in $(1 - B_j)X_{\epsilon, j}$, we see that the two terms parallel to $\nabla^\perp \rho_0$ cancel out. All that remains is
\begin{equation*}
\left(1 - B_j\right)\,X_{\eps, j}\,=\,\frac{1 - B_j}{|\nabla \rho_0|^2}\,\bigg\{ \big(u_{\eps, j} \cdot \nabla \rho_0\big)^2\,+\,\big(u_{\eps, j}\cdot\nabla^\perp \rho_0\big)^2\bigg\}\,\nabla\rho_0\,=\,
\Gamma_{\epsilon, j}\,.
\end{equation*}
We have thus proved that $X_{\epsilon, j}\, =\, \Gamma_{\epsilon, j}\, +\, R_{\epsilon, j}$, therefore
\begin{equation*}
\D\big(\rho_0\,u_{\epsilon, j} \otimes u_{\epsilon, j}\big)\,=\,\eta_{\epsilon, j}\,u_{\epsilon, j}^\perp\, +\,  \Gamma_{\epsilon, j}\, +\, R_{\epsilon, j}\,.
\end{equation*}

\textbf{Step 6: the new vorticity term.} It remains us to deal with the new vorticity term $\eta_{\epsilon, j}\,u_{\epsilon, j}^\perp$.
First we prove that $B_j\, \eta_{\epsilon, j}\, u_{\epsilon, j}^\perp$ is a remainder term $R_{\epsilon, j}$. Writing
$\eta_{\epsilon, j}\, =\, \eta_{\epsilon, j}^{(1)}\, +\, \epsilon^\theta\, \eta_{\epsilon, j}^{(2)}$ as in \eqref{EtaDec}, we see that, for $2 < q < +\infty$, one has
\begin{align*}
\left\| B_j\, \eta_{\eps, j}\, u_{\eps, j}^\perp\right\|_{L^1_T(L^1(K))}\,&\leq\, C\, \left\| \eta_{\eps, j}^{(1)} \right\|_{L^2_T(L^2)}\,\left\|u_{\eps, j} \right\|_{L^2_T(L^q)}\,
\text{meas}\left\{x\in K\,\big|\; |\nabla \rho_0(x)| \leq 2^{1 - j/2}\right\}^{\!(q-2)/(2q)} \\
&\quad+\,C\,\epsilon^\theta\,\left\| \eta_{\eps, j}^{(2)} \right\|_{L^2_T(L^2)}\,\left\| u_{\epsilon, j} \right\|_{L^2_T(L^2)}\,,
\end{align*}
which implies the bound
\begin{equation*}
\left\|B_j\,\eta_{\epsilon,j}\,u_{\epsilon,j}^\perp\right\|_{L^1_T(L^1(K))}\,\leq\,C\,\text{meas}\left\{x\in K\,\big|\;|\nabla \rho_0(x)|\leq 2^{1 - j/2}\right\}^{(q-2)/(2q)}\,+\,C(j)\,\epsilon^\theta\,.
\end{equation*}
Therefore, we do in fact see that $B_j\, \eta_{\epsilon, j}\, u_{\epsilon, j}^\perp\, =\, R_{\epsilon, j}$, as claimed.

Next, we use one last time the decomposition of $u_{\epsilon, j}$ on the basis
$\left(  \frac{\nabla \rho_0}{|\nabla \rho_0|}, \frac{\nabla^\perp \rho_0}{|\nabla \rho_0|} \right)$ to get
\begin{align*}
\left(1 - B_j\right)\,\eta_{\eps, j}\,u_{\eps, j}^\perp\, &=\,\frac{1 - B_j}{|\nabla \rho_0|^2}\, \eta_{\eps, j}\,\bigg\{\big(u _{\eps, j}^\perp\cdot\nabla\rho_0\big)\,\nabla\rho_0\,+\,
\big(u_{\eps, j} \cdot \nabla \rho_0\big)\, \nabla^\perp \rho_0 \bigg\} \\
&=\,\frac{1 - B_j}{|\nabla \rho_0|^2}\,\eta_{\epsilon, j}\,\big(u_{\epsilon, j}\cdot \nabla \rho_0\big)\, \nabla^\perp \rho_0\, +\, \Gamma_{\epsilon, j}\,.
\end{align*}
At this point, observe that, by taking the divergence of equation \eqref{Vort}, we can write $u_{\epsilon, j} \cdot \nabla \rho_0\,=\,\D\big(\rho_0\,u_{\eps, j}\big)\,=\,\D V_{\epsilon, j}\,-\,
\D\left(\epsilon^\theta \zeta_{\epsilon, j} + h_{\epsilon, j}\right)$, so
\begin{equation*}
\left(1 - B_j\right)\,\eta_{\eps, j}\,u_{\eps,j}^\perp\,=\,\frac{1 - B_j}{|\nabla \rho_0|^2}\,\eta_{\eps, j}\,\D V_{\eps, j}\,\nabla^\perp\rho_0\,+\, \Gamma_{\eps, j}\, +\, R_{\epsilon, j}\,.
\end{equation*}
Indeed, on the one hand $\epsilon^\theta \D\zeta_{\epsilon, j}\,\eta_{\epsilon, j}\,= \,R_{\epsilon, j}$; on the other, we can estimate
\begin{equation*}
\left\|\frac{1 - B_j}{|\nabla \rho_0|^2}\,\eta_{\eps, j}\, \D h_{\epsilon, j} \right\|_{L^1_T(L^1)}\,\leq\,C\,\left(\left\| \eta_{\eps,j}^{(1)}\right\|_{L^2_T(L^2)}\,\left\|h_{\eps,j}\right\|_{L^2_T(H^1)}
\,+\,\epsilon^\theta\,\left\| \eta_{\epsilon, j}^{(2)} \right\|_{L^2_T(L^2)}\, \| h_{\epsilon, j} \|_{L^2_T(H^1)}\right)\,,
\end{equation*}
so also that term is a remainder, thanks to the uniform bound \eqref{H1Bound} on $\big(h_{\eps, j}\big)_{\eps}$.

So far, we have thus obtained
\begin{equation*}
\D\big(\rho_0\, u_{\epsilon, j} \otimes u_{\epsilon, j}\big)\,=\,\frac{1 - B_j}{|\nabla \rho_0|^2}\;\eta_{\eps, j}\,\D V_{\epsilon, j}\;\nabla^\perp \rho_0\, +\, \Gamma_{\epsilon, j}\,+\,R_{\eps, j}\,.
\end{equation*}

\textbf{Step 7: end of the proof.} Recalling equation \eqref{Cor1}, we compute
\begin{align*}
\eta_{\eps, j}\,\D V_{\epsilon, j}\,&=\,-\,\eps\,\eta_{\epsilon, j}\,\partial_t\sigma_{\eps, j}\,=\,-\,\frac{1}{2}\,\eps\,\partial_t\!\left(\left|\sigma_{\eps, j}\right|^2\right)\,-\,
\eps\,\left(\eta_{\epsilon, j}\, -\, \sigma_{\epsilon, j}\right)\,\partial_t \sigma_{\epsilon, j} \\
&=\,-\,\frac{1}{2}\,\eps\,\partial_t\!\left(\left|\sigma_{\eps, j}\right|^2\right)\,-\,\eps\,\d_t\Big\{\big(\eta_{\epsilon, j}\,-\,\sigma_{\eps, j}\big)\,\sigma_{\epsilon, j}\Big\}\,+\,
\eps\,\curl f_{\epsilon, j}\, \sigma_{\epsilon, j}\,,
\end{align*}
where we have used also equation \eqref{Cor3} in passing from the first to the second line. By virtue of the uniform bounds we have already obtained on $\big(\sigma_{\epsilon, j}\big)_{\epsilon}$,
$\big(\eta_{\epsilon, j}\big)_{\epsilon}$ and $\big(f_{\epsilon, j}\big)_{\eps}$, we see that
\begin{equation*}
\frac{1 - B_j}{|\nabla \rho_0|^2}\; \eta_{\epsilon, j}\, \D V_{\epsilon, j}\; \nabla^\perp \rho_0\, =\, R_{\epsilon, j}\,.
\end{equation*}

In the end, we have shown that 
\begin{equation*}
\D\big(\rho_0\, u_{\epsilon, j} \otimes u_{\epsilon, j}\big)\,=\,\Gamma_{\epsilon, j}\, +\, R_{\epsilon, j}\,,
\end{equation*}
which finally achieves the proof of Proposition \ref{PropFin}.

\subsection{Conclusion: taking the limit} \label{ss:fully-conclusion}

For the quasi-homogeneous case, the results of Sections \ref{SecSg}, \ref{SecMag}, \ref{SecVisc} and \ref{SecQH} combine to show convergence of the sequence
$(r_\epsilon, u_\epsilon, b_\epsilon)_{\epsilon > 0}$ to a solution $(r, u, b)$ of system \eqref{EQLim}. Notice that we still have to prove the uniqueness part of Theorem \ref{THQH}.

In the fully non-homogeneous case, it remains to take care of the singular Coriolis force term: for this, we use the linear convergence properties for the density of Proposition \ref{PropSigma}.

%
\begin{prop}\label{PropCor}
The Coriolis force term satisfies the following convergence property: for any divergence-free $\phi \in \mathcal{D}\big(\mathbb{R}_+ \times \Omega ; \mathbb{R}^2\big)$,
write $\phi = \nabla^\perp \psi$; then
\begin{equation*}
\frac{1}{\epsilon} \int_0^{+ \infty} \int_\Omega \rho_\epsilon\, u_\epsilon^\perp \cdot \phi\, \dx \dt\,\tend_{\eps\rightarrow 0^+}\, -\int_0^{\infty}\int_\Omega\sigma\, \partial_t \psi\,\dx \dt\, +\,
\int_\Omega r_0\, \psi_{|t=0}\, \dx\,,
\end{equation*}
where $\s$ is the limit density oscillation function identified in Proposition \ref{PropSigma}, and we have improperly written the integral in space for the duality product in $H^{-3-\de}\times H^{3+\de}$.
\end{prop}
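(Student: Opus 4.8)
The plan is to bypass the momentum equation entirely and to read the limit of the Coriolis term off the weak formulation of the \emph{mass} equation, which is where the decomposition $\rho_\epsilon = \rho_0 + \epsilon\sigma_\epsilon$ is most naturally exploited. The starting point is a purely algebraic observation: since $\phi$ is divergence-free it can be written $\phi = \nabla^\perp\psi$ with $\psi\in\mathcal D\big([0,T[\,\times\Omega\big)$ for $T$ large enough (on $\Omega=\R^2$ the stream function is compactly supported because the complement of a ball is connected; on $\T^2$ one argues on the mean-zero part of $\phi$), and then the planar identity $a^\perp\cdot b^\perp = a\cdot b$ together with $\nabla^\perp\psi = (\nabla\psi)^\perp$ gives, for almost every $t\geq0$,
\begin{equation*}
\int_\Omega\rho_\epsilon\,u_\epsilon^\perp\cdot\phi\,\dx\, =\, \int_\Omega\rho_\epsilon\,u_\epsilon\cdot\nabla\psi\,\dx\,.
\end{equation*}
The right-hand side is, up to sign, exactly the convective flux that appears in the continuity equation.

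First I would plug $\psi$ into the weak mass equation (point (iv) of Definition~\ref{DEFSol}) and substitute $\rho_\epsilon = \rho_0 + \epsilon\sigma_\epsilon$ together with $\rho_{0,\epsilon} = \rho_0 + \epsilon\,r_{0,\epsilon}$. Because $\rho_0$ is time-independent, one has $\int_0^{+\infty}\!\int_\Omega\rho_0\,\partial_t\psi\,\dx\dt = -\int_\Omega\rho_0\,\psi_{|t=0}\,\dx$, so that the $\rho_0$-contributions cancel on the two sides. Dividing what is left by $\epsilon$ produces an \emph{exact} identity, free of any term singular in $\epsilon$, of the form
\begin{equation*}
\frac{1}{\epsilon}\int_0^{+\infty}\!\!\int_\Omega\rho_\epsilon\,u_\epsilon^\perp\cdot\phi\,\dx\dt\, =\, -\int_0^{+\infty}\!\!\int_\Omega\sigma_\epsilon\,\partial_t\psi\,\dx\dt\, +\, \int_\Omega r_{0,\epsilon}\,\psi_{|t=0}\,\dx\,.
\end{equation*}

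Then I would pass to the limit $\epsilon\to0^+$ in the right-hand side. On the one hand, by Proposition~\ref{PropSigma} we have $\sigma_\epsilon\stackrel{*}{\rightharpoonup}\sigma$ in $L^\infty_T(H^{-3-\delta})$ for all $T>0$ and all small $\delta>0$, and $\partial_t\psi$, being smooth and compactly supported, belongs to $L^1_T(H^{3+\delta})$; hence the first term on the right converges to the duality pairing written (abusively) as $\int_0^{+\infty}\!\int_\Omega\sigma\,\partial_t\psi$. On the other hand, $r_{0,\epsilon}\stackrel{*}{\rightharpoonup}r_0$ in $(L^2\cap L^\infty)(\Omega)$ by \eqref{EQcv} and $\psi_{|t=0}\in\mathcal D(\Omega)$, so the boundary term converges to $\int_\Omega r_0\,\psi_{|t=0}\,\dx$. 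Putting the two convergences together yields the claim.

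I do not expect a serious obstacle in this argument: the whole content is the recognition that the Coriolis force term is governed by the \emph{mass} equation rather than by the momentum equation, which lets it converge essentially for free once the decomposition $\rho_\epsilon = \rho_0 + \epsilon\sigma_\epsilon$ and the uniform bound of Proposition~\ref{PropSigma} are available. The only mild technical point is the extremely low regularity $H^{-3-\delta}$ of the oscillation $\sigma_\epsilon$, which forces us to interpret the pairing above in the $H^{-3-\delta}\times H^{3+\delta}$ duality; since $\psi$ is $C^\infty$ with compact support this is harmless, and it is precisely the abuse of notation announced in the statement.
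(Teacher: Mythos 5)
Your argument is correct and is essentially the paper's own proof: the planar identity $u_\epsilon^\perp\cdot\nabla^\perp\psi=u_\epsilon\cdot\nabla\psi$ converts the Coriolis term into the convective flux of the mass equation, after which the decomposition $\rho_\epsilon=\rho_0+\epsilon\,\sigma_\epsilon$, the weak-$*$ convergence of Proposition \ref{PropSigma} and \eqref{EQcv} conclude. One bookkeeping remark: carrying out the cancellation carefully (or, as the paper does, writing the left-hand side as $\langle\partial_t\sigma_\epsilon,\psi\rangle$ and integrating by parts in time), the exact identity is
$\frac{1}{\epsilon}\int_0^{+\infty}\!\int_\Omega\rho_\epsilon\,u_\epsilon^\perp\cdot\phi\,\dx\dt=-\int_0^{+\infty}\!\int_\Omega\sigma_\epsilon\,\partial_t\psi\,\dx\dt-\int_\Omega r_{0,\epsilon}\,\psi_{|t=0}\,\dx$,
with a \emph{minus} sign on the boundary term; the plus sign in your identity reproduces the one in the statement as printed, but both disagree with the actual computation (and with the paper's own proof), so this is a sign typo rather than a gap and does not affect the method.
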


\begin{proof}
Let $T>0$ be a fixed positive time and let $\phi = \nabla^\perp \psi$ be a divergence-free test function, with $\psi \in \mathcal{D}\big([0, T[\, \times \Omega\big)$.
We use the decomposition $\rho_\epsilon = \rho_0 + \epsilon \sigma_\epsilon$ to write
\begin{align*}
\frac{1}{\epsilon} \int_0^T \int_\Omega \rho_\epsilon\, u_\epsilon^\perp \cdot \phi\, \dx \dt\, & =\,-\,\frac{1}{\epsilon}\,\big\langle \D\big(\rho_\eps\,u_\eps\big), \psi\big\rangle_{\mc D'\times\mc D}\, =\,
\frac{1}{\epsilon}\, \big\langle  \partial_t \rho_\epsilon, \psi  \big\rangle_{\mc D'\times\mc D} \\
& =\,\big\langle  \partial_t \sigma_\epsilon, \psi  \big\rangle_{\mc D'\times\mc D}\,=\,-\int_0^T \int_\Omega \sigma_\epsilon\,\partial_t \psi\,\dx \dt\,-\,\int_\Omega r_{0, \epsilon}\,\psi_{|t = 0}\,\dx\,.
\end{align*}
It is now matter of applying the convergence properties of \eqref{EQcv} and Proposition \ref{PropSigma}.
\end{proof}

This last trick of using $\psi$ as a test function rather than the divergence-free $\phi$ allows us to get rid of the singularity in the case of non-constant densities.
However, it forces us to take the $\curl$ of the whole equation.
Propositions \ref{PropDebut} and \ref{PropMag} to \ref{PropCor} show that, for any divergence-free $\phi \in \mathcal{D}\big(\mathbb{R}_+ \times \Omega ; \mathbb{R}^2\big)$,
which we can write $\phi = \nabla^\perp \psi$ with $\psi \in \mathcal{D}\big(\mathbb{R}_+ \times \Omega\big)$, one has
\begin{multline*}
\int_0^{+\infty}\!\!\!\int_\Omega \sigma\,\partial_t \psi\,\dx\dt\,+\int_0^{+\infty}\!\!\!\int_\Omega \bigg( \rho_0 u\cdot\partial_t \nabla^\perp \psi - b\otimes b: \nabla \nabla^\perp \psi-
\nu(\rho_0) \nabla u: \nabla \nabla^\perp \psi  \bigg) \dx \dt \\
+ \big\langle \Gamma, \D(\rho_0 \nabla^\perp \psi) \big\rangle_{\mc D'\times\mc D}\,=\, \int_\Omega \bigg(  r_0\, \psi_{|t = 0} - m_0 \nabla^\perp \psi_{|t = 0}  \bigg)\,\dx \dt\,.
\end{multline*}
Integration by parts show that we have indeed the weak form of the sought equation. The proof of Theorem \ref{THNH} is then completed.

\section{The quasi-homogeneous case: study of the limit system} \label{s:limit-system}

In this section, we focus our attention on the limit system for the quasi-homogeneous case, which we recall for the reader's convenience:
\begin{equation}\label{EQL}
\begin{cases}
\partial_t r \,+\, \D (r\,u)\, =\, 0 \\
\partial_t u \,+\, \D (u \otimes u)\, +\, \nabla \pi\, +\, \dfrac{1}{2} \nabla \big( |b|^2 \big)\, +\, r\,u^\perp\, =\, \nu(1) \,\Delta u \,+\, \D (b \otimes b) \\
\partial_t b \,+\, \D (u \otimes b \,-\, b \otimes u)\, =\, \mu(1)\, \Delta b \\
\D (u) \,=\, \D(b) \,=\, 0\,,
\end{cases}
\end{equation}
for some pressure function $\pi$. For simplicity, from now on we set $\nu(1)=\mu(1)=1$.

We shall prove that the solutions to system \eqref{EQL} are unique in the energy space, given regular enough initial data.
In particular, this completes the proof of Theorem \ref{THQH}, showing that the whole sequence of solutions $(r_\epsilon, u_\epsilon, b_\epsilon)_{\epsilon}$ weakly converges to the limit
point $(r, u, b)$, without the need to extract a subsequence.
We proceed in four steps. First, we find energy estimates for \eqref{EQL} at the order of regularity suited to prove uniqueness with stability estimates.
Then, we show rigorously the existence of solutions at this level of regularity. Finally, we prove uniqueness for system \eqref{EQL}.
The end result of this section is a well-posedness theorem for the limit system \eqref{EQL}.

\begin{thm}
Let $0 < \beta < 1$ and let $(r_0, u_0, b_0)\in H^1 \times H^1 \times H^{1 + \beta}$ be a set of initial data. There exists a (unique) solution $(r, u, b)$ of \eqref{EQL} related to
those initial data such that
\begin{enumerate}[(i)]
\item for all $0< \gamma < \beta$, we have $r \in C^0(\mathbb{R}_+ ; H^{1 + \gamma})$;
\item we have $u, b \in L^\infty_{\rm loc}(H^1) \cap L^2_{\rm loc}(H^2)$.
\end{enumerate}
Moreover, such a solutions is unique in the energy space $L^\infty_{\rm loc}(L^2 \cap L^\infty) \times L^\infty_{\rm loc}(L^2) \times L^\infty_{\rm loc}(L^2)$.
\end{thm}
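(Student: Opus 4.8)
The plan is to establish, in order: \textit{(i)} a priori estimates for smooth solutions of \eqref{EQL} at the regularity level $u,b\in L^\infty_{\rm loc}(H^1)\cap L^2_{\rm loc}(H^2)$ and $r\in L^\infty_{\rm loc}(H^{1+\gamma})$ for every $\gamma<\beta$; \textit{(ii)} existence of a solution enjoying this regularity; \textit{(iii)} uniqueness in the energy class. For \textit{(i)}, I would start from the energy balance already derived before Definition~\ref{DEFSol}: the Coriolis term $r\,u^\perp$ drops by orthogonality to $u$ and the Lorentz/induction terms cancel, so $u,b\in L^\infty(L^2)\cap L^2(\dot H^1)$ with norm controlled by $\|u_0\|_{L^2}^2+\|b_0\|_{L^2}^2$. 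Since $r$ is merely transported by a divergence-free field, $\|r(t)\|_{L^p}=\|r_0\|_{L^p}$ for every $p\in[1,\infty]$; this lets one close an $H^1$ estimate for $(u,b)$ by testing the momentum and magnetic equations against $-\Delta u$ and $-\Delta b$: the convective and Lorentz/induction contributions are absorbed by the dissipation through Ladyzhenskaya's inequality (exactly as for $2$-D MHD), while the Coriolis term uses the identity $\curl(r\,u^\perp)=\D(r\,u)=u\cdot\nabla r$ to write $\bigl|\int_\Omega r\,u^\perp\cdot\Delta u\bigr|\le\|r_0\|_{L^\infty}\|u\|_{L^2}\|\Delta u\|_{L^2}$, which is again absorbed. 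A Gronwall argument with $\|\nabla u\|_{L^2}^2,\|\nabla b\|_{L^2}^2\in L^1_{\rm loc}$ then gives $u,b\in L^\infty_{\rm loc}(H^1)\cap L^2_{\rm loc}(H^2)$. It remains to propagate $r\in H^{1+\gamma}$: applying the Bahouri--Chemin--Danchin transport estimate in $H^{1+\gamma}$ produces the factor $\exp\bigl(C\int_0^t\|\nabla u\|_{L^\infty}\bigr)$, and since in dimension two $\nabla u$ is only \emph{almost} Lipschitz (controlled in $L^\infty_{\rm loc}(L^2)\cap L^2_{\rm loc}(H^1)$, hence in $L^2_{\rm loc}$ of every $L^p$ with $p<\infty$), this quantity is bounded through a logarithmic interpolation inequality together with an Osgood-type lemma, at the cost of the strict loss $\gamma<\beta$.

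For \textit{(ii)}, I would regularise the system — a Friedrichs truncation or a Galerkin scheme — obtaining global approximate solutions by ODE theory, for which the estimates of Step~\textit{(i)} hold uniformly. Bounding the time derivatives from the equations, the Aubin--Lions--Simon lemma yields strong compactness of $(u_n,b_n)$ in $L^2_{\rm loc}(\R_+;L^2_{\rm loc}(\Omega))$, which is enough to pass to the limit in all quadratic terms; the uniform $H^{1+\gamma}$ bound on $r_n$, together with the linear transport structure and a Di Perna--Lions renormalisation argument (in the spirit of Proposition~\ref{PropStrCnv}), allows one to pass to the limit in $\D(r_nu_n)$ and in $r_nu_n^\perp$. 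This produces a solution with the announced regularity, and the time-continuity $r\in C^0(\R_+;H^{1+\gamma})$ follows from the transport equation and the uniform bounds (e.g. by weak continuity plus norm continuity, or by running the transport estimate at level $H^{1+\gamma'}$ with $\gamma<\gamma'<\beta$ and interpolating).

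For \textit{(iii)}, let $(r_1,u_1,b_1)$ be the solution constructed above and $(r_2,u_2,b_2)$ any solution in the energy class $L^\infty_{\rm loc}(L^2\cap L^\infty)\times L^\infty_{\rm loc}(L^2)\times L^\infty_{\rm loc}(L^2)$ with the same initial data; put $\delta r=r_1-r_2$, $\delta u=u_1-u_2$, $\delta b=b_1-b_2$. I would write the equations for the differences, distributing the nonlinear and Coriolis terms so that the \emph{rough} factors fall on the energy-class solution and the \emph{smooth} ones on $(r_1,u_1,b_1)$: in particular the $\delta r$ equation is taken as $\partial_t\delta r+u_2\cdot\nabla\delta r=-\delta u\cdot\nabla r_1$, so that in an $L^2$ estimate the transport term vanishes (only $\D u_2=0$ is used), while the source $\delta u\cdot\nabla r_1$ is split by Hölder as $\|\delta u\|_{L^{2/\gamma}}\|\nabla r_1\|_{L^{2/(1-\gamma)}}$ and, after Gagliardo--Nirenberg and Young, partly absorbed by the viscous dissipation of $\delta u$, the remainder contributing a Gronwall coefficient $\|r_1\|_{H^{1+\gamma}}^{2/(1+\gamma)}\in L^\infty_{\rm loc}$. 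Running the joint $L^2$ estimate for $(\delta u,\delta b)$ and $\delta r$, the viscous and resistive dissipation absorbs the bilinear terms (Ladyzhenskaya once more), the Coriolis difference reduces — after cancelling $r_2\,\delta u^\perp$ — to a term bounded in terms of $\|u_1\|_{H^1}$, $\|\delta u\|_{H^1}$ and $\|\delta r\|_{L^2}$, and a Gronwall inequality whose coefficients lie in $L^1_{\rm loc}$ and depend only on the norms of $(r_1,u_1,b_1)$ forces $\delta r=\delta u=\delta b=0$. In the setting of Theorem~\ref{THQH} this upgrades the convergence to that of the whole sequence $(r_\eps,u_\eps,b_\eps)_{\eps>0}$.

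The main obstacle is Step~\textit{(i)}: propagating the $H^{1+\gamma}$ bound on $r$ when the transporting velocity is only $L^\infty_{\rm loc}(H^1)\cap L^2_{\rm loc}(H^2)$, hence merely log-Lipschitz in $2$-D. Controlling $\int_0^t\|\nabla u\|_{L^\infty}$ rigorously — via a logarithmic (Brezis--Gallouet / Beale--Kato--Majda type) inequality and an Osgood lemma — is precisely what forces the strict loss $\gamma<\beta$; everything else reduces to the by now standard machinery of $2$-D viscous-resistive MHD.
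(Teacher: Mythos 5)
Your overall architecture (order-two a priori estimates, then a Friedrichs approximation with Aubin--Lions compactness, then weak--strong uniqueness via an $L^2$ stability estimate) is the same as the paper's, and your Steps (ii) and (iii) match Propositions \ref{PropEx} and \ref{PropUni} in all essentials. The gap is precisely in the part you single out as the main obstacle: the propagation of $H^{1+\gamma}$ regularity for $r$. You propose to use the no-loss transport estimate, which produces the factor $\exp\bigl(C\int_0^t\|\nabla u\|_{L^\infty}\bigr)$, and then to bound $\int_0^t\|\nabla u\|_{L^\infty}$ by a logarithmic interpolation inequality plus an Osgood lemma. This cannot close: the estimates of your Step (i) give only $u\in L^2_T(H^2)$, so $\nabla u(t)\in H^1(\R^2)$, which does \emph{not} embed into $L^\infty$ in dimension two, and $\|\nabla u\|_{L^\infty}$ is simply not controlled (a Brezis--Gallouet type inequality would require $\nabla u\in H^{1+\delta}$ inside the logarithm, which is unavailable). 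The alternative reading of your plan --- treat $u$ as log-Lipschitz, which it is since $\nabla u\in H^1\hookrightarrow BMO$, and run the Osgood-type losing estimate --- produces a regularity index that decays linearly in time, i.e. $r(t)\in H^{1+\beta-Ct}$; this cannot yield $r\in C^0(\R_+;H^{1+\gamma})$ for a \emph{fixed} $\gamma<\beta$ on the whole half-line, which is what the statement requires.

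The correct tool, and the one the paper invokes, is the transport estimate with a \emph{fixed} loss of regularity (Proposition 5.2 of \cite{D}, Theorem 3.33 of \cite{BCD}): for any $0<\gamma<\beta$ it propagates $H^{1+\gamma}$ regularity under the sole hypothesis that $\nabla u$ belong to $L^1_T$ of a Besov space of the type $B^{d/p}_{p,\infty}$ --- here guaranteed by $u\in L^2_T(H^2)$, since $H^1\hookrightarrow B^{1}_{2,\infty}$ when $d=2$ --- at the price of a constant $C(\beta,\gamma)$ blowing up as $\gamma\to\beta$ and of the quadratic exponent appearing in \eqref{est:r_higher}. With that substitution the rest of your argument goes through. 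A minor remark: in your $H^1$ estimate the bound $\bigl|\int_\Omega r\,u^\perp\cdot\Delta u\bigr|\le\|r_0\|_{L^\infty}\|u\|_{L^2}\|\Delta u\|_{L^2}$ is correct but needs no curl identity, being plain Cauchy--Schwarz; the finer $L^p$--$L^q$ splitting of this term in Proposition \ref{PropO2} serves only to optimise the growth in $T$ and is not needed for the well-posedness statement itself.
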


\begin{rmk} \label{r:sol}
\begin{enumerate}[(i)]
\item It goes without saying that the previous regularity properties for the solution $(r,u,b)$ have also a quantitative counterpart, for which we refer to the estimates of Proposition \ref{PropO2}.
\item The uniqueness of solutions is a consequence of a stability estimate and a \emph{weak-strong} uniqueness result, which are stated in Proposition \ref{PropUni} below.
\end{enumerate}
\end{rmk}

In what follows, we will make extensive use of the Gagliardo-Nirenberg inequality (GN inequality for short, see Lemma \ref{GN}) as well as the Young inequality in the following form:
if $1/p + 1/q = 1$ then, for any $\eta > 0$ and $a, b \geq 0$, we have $ab \leq \eta a^p + \eta^{-q/p} b^q$. From now on, $\eta > 0$ will always note a small positive
constant to be fixed in the later parts of the proofs.
In addition, we try to find inequalities that are as precise as (reasonably) possible in order to highlight which terms have the most impact on the final estimates.

\subsection{Order 2 energy estimates} \label{ss:en-est}

In this section, we focus on finding order 2 \textsl{a priori} estimates for the limit system; one way to do this (see e.g. Section 4.4.1 of \cite{FG})
is to use $\Delta u$ and $\Delta b$ as test functions in \eqref{EQL}.
In addition, we attempt to optimise the estimates as far as growth in time is concerned: we show that
the higher order energy 
grows slower than any polynomial function of $T \geq 0$ of positive degree $h > 0$.

\begin{prop}\label{PropO2}
Let $(r, u, b)$ be a regular enough solution of \eqref{EQL} related to the (regular) initial datum $(r_0, u_0, b_0)$. Then, we have the following properties:
\begin{enumerate}[(i)]
\item we have $u, b \in L^\infty (L^2)$ and $\nabla u, \nabla b \in L^2(L^2)$, with the standard energy estimate \eqref{EQEn} below;
\item for any fixed $T>0$, we have $\nabla u, \nabla b \in L^\infty_T (L^2)$ and $\Delta u, \Delta b, \partial_t u, \partial_t b \in L^2_T(L^2)$, with explicit bounds:
for all $h > 0$, there is a constant $C = C\big(\|r_0\|_{L^2\cap L^\infty}, \| (u_0, b_0) \|_{H^1},h\big)>0$ such that
\begin{equation} \label{est:u-b_higher}
\| \nabla u \|_{L^\infty_T(L^2)} + \| (\Delta u, \partial_t u) \|_{L^2_T(L^2)} + \| \nabla b \|_{L^\infty_T(L^2)} + \| (\Delta b, \partial_t b) \|_{L^2_T(L^2)} \leq C (1 + T^h);
\end{equation}
\item for all $2 \leq p \leq +\infty$ and all $t\geq0$, $\| r(t) \|_{L^p} = \| r_0 \|_{L^p}$, and for all $0 < \gamma < \beta < 1$ and any fixed $T>0$, we also have
\begin{equation} \label{est:r_higher}
\| r \|_{L^\infty_T(H^{1+ \gamma})}\,\leq\,C(\beta, \gamma)\, \exp\left\{ C(\beta, \gamma) \left(  \int_0^T \| \nabla u \|_{H^1} \dt \right)^{\!\!2}  \right\}\, \| r_0 \|_{H^{1+\beta}}\,.
\end{equation}
\end{enumerate}
\end{prop}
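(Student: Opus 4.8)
My plan is to run the classical parabolic–regularity scheme for \eqref{EQL}, exploiting at every step that the basic energy balance provides a \emph{global}-in-time dissipation bound. First I would recover item (i): testing the momentum equation by $u$ and the induction equation by $b$ reproduces the computation that led to \eqref{EN} — the pressure and $\tfrac12\nabla|b|^2$ terms vanish because $\div u=0$, the Coriolis term drops since $u^\perp\cdot u\equiv0$, and the Lorentz and induction terms cancel each other after one integration by parts — so that in fact the energy identity \eqref{EQEn},
\[
\|u(t)\|_{L^2}^2+\|b(t)\|_{L^2}^2+2\int_0^t\big(\|\nabla u\|_{L^2}^2+\|\nabla b\|_{L^2}^2\big)\dt=\|u_0\|_{L^2}^2+\|b_0\|_{L^2}^2\,,
\]
holds; this yields $u,b\in L^\infty(L^2)$, $\nabla u,\nabla b\in L^2(\R_+;L^2)$, and, setting $D(t):=\|\nabla u(t)\|_{L^2}^2+\|\nabla b(t)\|_{L^2}^2$, the crucial fact $\|D\|_{L^1(\R_+)}\le\tfrac12\big(\|u_0\|_{L^2}^2+\|b_0\|_{L^2}^2\big)$. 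The first assertion of item (iii), $\|r(t)\|_{L^p}=\|r_0\|_{L^p}$, is immediate: since $\div u=0$, the density oscillation $r$ solves the pure transport equation $\partial_t r+u\cdot\nabla r=0$, so all its Lebesgue norms are conserved (cf. \eqref{est:rho_L^p}).

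For item (ii) I would test the momentum equation by $-\Delta u$ and the induction equation by $-\Delta b$; using again $\div u=\div b=0$ to eliminate the gradient terms, this gives
\[
\tfrac12\frac{\rm d}{{\rm d}t}Y(t)+\|\Delta u\|_{L^2}^2+\|\Delta b\|_{L^2}^2=\mathcal R(t),\qquad Y:=\|\nabla u\|_{L^2}^2+\|\nabla b\|_{L^2}^2,
\]
where $\mathcal R(t)$ gathers the convective, Lorentz, induction and Coriolis contributions. The quadratic terms are controlled by the two-dimensional Gagliardo--Nirenberg inequalities $\|f\|_{L^4}^2\le C\|f\|_{L^2}\|\nabla f\|_{L^2}$ and $\|\nabla f\|_{L^4}^2\le C\|\nabla f\|_{L^2}\|\Delta f\|_{L^2}$ of Lemma \ref{GN}; after a Young inequality absorbing a small multiple $\eta$ of $\|\Delta u\|_{L^2}^2+\|\Delta b\|_{L^2}^2$, each of them is bounded by $\eta(\|\Delta u\|_{L^2}^2+\|\Delta b\|_{L^2}^2)+C_\eta(\|u\|_{L^2}^2+\|b\|_{L^2}^2)\,D(t)\,Y(t)$, i.e. by $\eta(\cdots)+g(t)Y(t)$ with $g\in L^1(\R_+)$ and $\|g\|_{L^1}$ depending only on the data. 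The Coriolis term $\int_\Omega r\,u^\perp\cdot\Delta u\,\dx$ I would \emph{not} integrate by parts in $r$ (that would bring in $\nabla r$, whose norm is not uniformly controlled); instead, using the conservation $\|r(t)\|_{L^4}=\|r_0\|_{L^4}\le C\|r_0\|_{L^2\cap L^\infty}$ together with $\|u\|_{L^4}\le C\|u\|_{L^2}^{1/2}\|\nabla u\|_{L^2}^{1/2}$ and Young, I bound it by $\eta\|\Delta u\|_{L^2}^2+C_\eta'\,D(t)^{1/2}$, so the Coriolis source $E(t)$ lies in $L^2(\R_+)$. Choosing $\eta$ small enough I am left with $Y'+\|\Delta u\|_{L^2}^2+\|\Delta b\|_{L^2}^2\le g(t)Y+E(t)$ with $g\in L^1(\R_+)$, $E\in L^2(\R_+)$; Grönwall gives $Y(T)\le\big(Y(0)+\int_0^TE\big)\exp\big(\int_0^Tg\big)\le C\,(1+T^{1/2})$, and integrating the inequality also $\|(\Delta u,\Delta b)\|_{L^2_T(L^2)}^2\le C(1+T^{1/2})$. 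To upgrade $T^{1/2}$ to $T^h$ for an arbitrary $h>0$, I would iterate the estimate over a sequence of time windows of geometrically increasing length, on each of which the tails of the convergent integrals of $D$ and $E^2$ become arbitrarily small, and keep track of the (uniformly bounded) Grönwall constants across the windows; this shows that $Y$ grows more slowly than any positive power of $T$, as in Section 4.4.1 of \cite{FG}. Once $\nabla u,\nabla b\in L^\infty_T(L^2)$ and $\Delta u,\Delta b\in L^2_T(L^2)$ are known, every remaining term of the momentum and induction equations lies in $L^2_T(L^2)$ (the quadratic ones by the $L^4$ bounds, the Coriolis one by $\|r_0\|_{L^\infty}\|u\|_{L^2}$, the pressure being recovered by solving the usual elliptic problem), so that $\partial_t u,\partial_t b\in L^2_T(L^2)$ with the quantitative control \eqref{est:u-b_higher}.

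It remains to establish the second assertion of item (iii). Since $r$ is transported by $u$, I would propagate its Sobolev regularity: a commutator (Littlewood--Paley / paradifferential) estimate applied to $\partial_t r+u\cdot\nabla r=0$ produces, for $0<\gamma<\beta<1$, a differential inequality of the form $\frac{\rm d}{{\rm d}t}\|r\|_{H^{1+\gamma}}\le C\,\|\nabla u\|_{L^\infty}\,\|r\|_{H^{1+\gamma}}$, in which the borderline two-dimensional quantity $\|\nabla u\|_{L^\infty}$ is handled by a logarithmic (Brezis--Gallou\"et type) interpolation inequality, the higher norm of $u$ appearing there being converted — through the equation and the elliptic estimate for the pressure — into a power of $\|r\|_{H^{1+\beta}}$; integrating in time and resolving the resulting logarithmic Grönwall inequality yields precisely the bound \eqref{est:r_higher}. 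Finally $r\in C^0(\R_+;H^{1+\gamma})$ follows from the uniform-in-time bound just obtained in $H^{1+\beta'}$ (for $\gamma<\beta'<\beta$) together with $\partial_t r=-u\cdot\nabla r\in L^2_{\rm loc}(H^\gamma)$ and interpolation. I expect the main obstacles to be: (a) the Coriolis term, which, unlike all the others, does not generate a time-integrable factor, so that the sub-polynomial growth rate in \eqref{est:u-b_higher} genuinely requires the iterative window argument; and (b) the coupling between the transport estimate for $r$ and the parabolic estimates for $(u,b)$ through $\|\nabla u\|_{H^1}$, which forces the $r$-estimate to be carried out in tandem with, rather than after, the estimates of item (ii).
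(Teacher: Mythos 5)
Your overall scheme (basic energy balance, then testing with $-\Delta u$, $-\Delta b$, then reading $\partial_t u,\partial_t b$ off the equations via the Leray projector, then propagating Sobolev regularity of $r$ by transport) is the same as the paper's. The genuine gap is in the treatment of the Coriolis term, which is precisely the term responsible for the claim ``for all $h>0$'' in \eqref{est:u-b_higher}. Your bound $\bigl|\int_\Omega r\,u^\perp\cdot\Delta u\bigr|\le \eta\|\Delta u\|_{L^2}^2+C\,\|\nabla u\|_{L^2}$ produces a source $E\in L^2(\R_+)$ and hence only $\int_0^T E\le \|E\|_{L^2}\,T^{1/2}$, i.e.\ the growth rate $T^{1/2}$. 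The proposed upgrade by iterating over geometrically growing time windows does not close this: on the window $[T_k,2T_k]$ the increment is $\epsilon_k T_k^{1/2}$ with $\sum_k\epsilon_k^2<\infty$ but with no decay rate on $\epsilon_k$, so summing (even with Cauchy--Schwarz) only yields $o(T^{1/2})$, never $O(T^h)$ for $h<1/2$. The fix, which is what the paper does, is to let the H\"older exponents depend on $h$: write $\bigl|\int r\,u^\perp\cdot\Delta u\bigr|\le\|r\|_{L^p}\|u\|_{L^q}\|\Delta u\|_{L^2}$ with $1/p+1/q=1/2$, apply Lemma \ref{GN} to get $\|u\|_{L^q}\le C\|u\|_{L^2}^{2/q}\|\nabla u\|_{L^2}^{1-2/q}$, and after Young's inequality obtain the source $M_{1+h}(t)=C\|u\|_{L^2}^{4/q}\|\nabla u\|_{L^2}^{2(1-2/q)}$; choosing $q$ so that $(1-2/q)(1+h)\le 1$ makes $M_{1+h}\in L^{1+h}(\R_+)$ by \eqref{EQEn} (here the conserved norm $\|r\|_{L^p}\le\|r_0\|_{L^2\cap L^\infty}$ is available for every $p\in[2,\infty]$), and then $\int_0^T M_{1+h}\le\|M_{1+h}\|_{L^{1+h}}T^{h/(1+h)}\le C(1+T^h)$ in a single Gr\"onwall pass. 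With your fixed choice $p=q=4$ you are locked at $h=1/2$.

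Two smaller points. For \eqref{est:r_higher}, the Brezis--Gallou\"et route you sketch (controlling $\|\nabla u\|_{L^\infty}$ logarithmically and ``converting the higher norm of $u$ into a power of $\|r\|_{H^{1+\beta}}$ through the equation'') is not the right mechanism and would not visibly produce the stated constant $\exp\{C(\int_0^T\|\nabla u\|_{H^1}\dt)^2\}$: that squared time-integral in the exponential is the signature of the loss-of-regularity estimates for transport equations with velocity merely in $L^1_T(H^2)$ (Proposition 5.2 of \cite{D}, or Theorem 3.33 of \cite{BCD}), which is what should be invoked here, with the loss $\gamma<\beta$ accounting for the failure of $\nabla u$ to be Lipschitz. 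Relatedly, your concluding remark (b) is mistaken: the $H^{1+\gamma}$ estimate on $r$ is not needed anywhere in item (ii) (only the conserved $L^p$ norms of $r$ enter there), so the two sets of estimates decouple and (iii) can be carried out after (ii), exactly as in the paper.
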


\begin{proof}
First, testing the momentum equation with $u$ and the magnetic field equation with $b$ gives a basic energy estimate similar to \eqref{EN}: namely,
\begin{equation}\label{EQEn}
\frac{1}{2}\,\Big(\|u(t)\|^2 + \|b(t)\|^2 \Big) + \int_0^t\Big(\|\nabla u(s)\|^2 + \|\nabla b(s)\|^2\Big)\,\text{d}s\,\leq\,\frac{1}{2}\,\Big(\|u_0\|^2 + \|b_0\|^2 \Big)\,.
\end{equation}
Next, we use the fact that $r$ solves a pure transport equation with a divergence-free flow $u$ to see that the $L^p$ norms of $r(t)$ are preserved: $\| r(t)\|_{L^p} = \| r_0 \|_{L^p}$
for all $p\in[2,+\infty]$ and all $t\geq0$.

Now, we consider order 2 energy estimates. We test the momentum equation with $-\Delta u$ and the magnetic field equation with $-\Delta b$; summing and integrating by parts gives
\begin{align} \label{O2I}
&\frac{1}{2}\,\frac{\rm d}{ \dt}\Big(\|\nabla u\|^2 + \|\nabla b\|^2 \Big) + \Big(\|\Delta u\|^2+ \|\Delta b\|^2\Big)+\int_\Omega (b \cdot \nabla ) b \cdot \Delta u \dx +
\int_\Omega (b \cdot \nabla) u \cdot \Delta b\dx \\
&\qquad\qquad\qquad\qquad\qquad\qquad
= \int_\Omega (u \cdot \nabla) u \cdot \Delta u \dx + \int_\Omega r u^\perp \Delta u \dx + \int_\Omega (u \cdot \nabla) b \cdot \Delta b \dx\,. \nonumber
\end{align}

We start by handling the two inegrals which do not involve the magnetic field. On the one hand, using H\"older's inequality and then Proposition \ref{PropOP} yields
\begin{align*}
\left| \int_\Omega (u \cdot \nabla)u \cdot \Delta u \dx \right|\,\leq\,\| \Delta u\|_{L^2} \| u \|_{L^\infty} \| \nabla u \|_{L^2}\,\leq\, C \| \Delta u\|_{L^2}^{3/2} \| u \|_{L^2}^{1/2} \| \nabla u \|_{L^2}\,.
\end{align*}
Young's inequality with exponents $\frac{3}{4} + \frac{1}{4} = 1$ gives in turn
\begin{equation*}
\left| \int_\Omega (u \cdot \nabla)u \cdot \Delta u \dx \right|\,\leq\,\eta\,\| \Delta u\|_{L^2}^2\,+\,C(\eta)\,\| u \|_{L^2}^2\, \| \nabla u \|_{L^2}^4\,=\,\eta\,\| \Delta u\|_{L^2}^2\, +\,
M_1(t)\,\| \nabla u \|_{L^2}^2\,,
\end{equation*}
where $M_1(t)\,:= C(\eta)\| u(t) \|_{L^2} \| \nabla u(t) \|_{L^2}^2$. Remark that, by \eqref{EQEn}, $M_1 \in L^1(\mathbb{R}_+)$ is a globally integrable function, with
$\| M_1 \|_{L^1(\mathbb{R}_+)}$ depending only on $\eta$, $\|u_0\|_{L^2}$ and $\|b_0\|_{L^2}$.

On the other hand, we use H\"older's inequality with exponents $\frac{1}{p} + \frac{1}{q} + \frac{1}{2} = 1$ which we will choose later, followed by the GN inequality: for all $\eta > 0$,
\begin{align*}
\left| \int_\Omega ru^\perp \cdot \Delta u \dx \right|\,&\leq\,\|r\|_{L^p}\,\| u \|_{L^q}\,\| \Delta u \|_{L^2}\;\leq\;C(q)\,\| r \|_{L^2}^{2/p}\,\| r \|_{L^\infty}^{1 - 2/p}\,
\| u \|_{L^2}^{2/q}\,\| \nabla u \|_{L^2}^{1 - 2/q}\,\| \Delta u \|_{L^2} \\
&\leq\, \eta\,\| \Delta u \|_{L^2}^2\, +\, C\big(\eta, \| r_0 \|_{L^2\cap L^\infty}, q\big)\,\| u \|_{L^2}^{4/q}\, \| \nabla u \|_{L^2}^{2\big(1-2/q\big)}\,.
\end{align*}
Since $\|u\|_{L^2}\in L^\infty(\R_+)$ and $\| \nabla u \|_{L^2}^2 \in L^1(\mathbb{R}_+)$ by the energy inequality \eqref{EQEn}, we see that, for any arbitrary $h > 0$, we can chose $q$ so large that
the function $M_{1+h}(t) := C \| u(t) \|_{L^2}^{4/q} \| \nabla u(t) \|_{L^2}^{2\big(1 -2/q \big)}$ belongs to the space $L^{1+h}(\mathbb{R}_+)$. Therefore
\begin{equation} \label{est:ru}
\left| \int_\Omega r\,u^\perp \cdot \Delta u\, \dx \right|\, \leq\, \eta\, \| \Delta u \|_{L^2}^2\, +\, M_{1+h}(t)\,,
\end{equation}
with the norm $\| M_{1+h} \|_{L^{1+h}(\mathbb{R}_+)}$ only depending on the quantities $(\eta, \| r_0 \|_{L^2\cap L^\infty}, \|(u_0,b_0) \|_{L^2}, h)$.

Now we take care of the three remaining integrals in \eqref{O2I} involving the magnetic field. As with the first integral, Proposition \ref{PropOP} yields
\begin{align*}
\left|  \int_\Omega (b \cdot \nabla)b \cdot u \dx \right| & \leq \| \Delta u \|_{L^2} \| b \|_{L^\infty} \| \nabla b \|_{L^2}\;\leq\;C
\| \Delta u \|_{L^2} \| \Delta b \|_{L^2}^{1/2} \| b \|_{L^2}^{1/2} \| \nabla b \|_{L^2}\\
& \leq \eta \| \Delta u \|_{L^2}^2 + C(\eta) \| \Delta b \|_{L^2} \| b\|_{L^2} \| \nabla b \|_{L^2}^2\\
& \leq 
\eta \bigg\{ \| \Delta u \|_{L^2}^2 + \| \Delta b \|_{L^2}^2 \bigg\} + M_1(t)\| \nabla b \|_{L^2}^2\,,
\end{align*}
where we have set $M_1(t) := C(\eta) \| b(t) \|_{L^2}^2 \| \nabla b(t) \|_{L^2}^2$. Notice that $M_1\in L^1(\mathbb{R}_+)$, with $\| M_1 \|_{L^1(\mathbb{R}_+)}$ depending only
on $\eta$, $\|u_0\|_{L^2}$ and $\|b_0\|_{L^2}$.

Next, similarly as above, we use H\"older's inequality, the GN inequality and then Young's inequality twice, to obtain
\begin{align*}
\left|  \int_\Omega (b \cdot \nabla )u \cdot \Delta b \dx   \right| & \leq \| b \|_{L^4} \| \nabla u \|_{L^4} \| \Delta b \|_{L^2} \leq C \| b \|_{L^2}^{1/2} \| \nabla b \|_{L^2}^{1/2} \| \nabla u \|_{L^2}^{1/2} \| \Delta u \|_{L^2}^{1/2} \| \Delta b \|_{L^2}\\
& \leq \eta \| \Delta b \|_{L^2}^2 + C(\eta) \| b \|_{L^2} \| \nabla b \|_{L^2} \| \nabla u \|_{L^2} \| \Delta u \|_{L^2}\\
& \leq \eta \bigg\{ \| \Delta b \|_{L^2}^2 + \| \Delta u \|_{L^2}^2  \bigg\} +  M_1(t) \| \nabla b \|_{L^2}^2,
\end{align*}
where this time $M_1 = C(\eta) \|b\|^2_{L^2} \| \nabla u \|_{L^2}^2 \in L^1(\mathbb{R}_+)$, with $\| M_1 \|_{L^1(\mathbb{R}_+)} = C(\eta, \| u_0\|_{L^2}, \| b_0 \|_{L^2})$.
Exactly the same computations \textsl{mutatis mutandi} yield
\begin{equation*}
\left|  \int_\Omega (u \cdot \nabla) b \cdot \Delta b \dx   \right| \leq \eta \bigg\{ \| \Delta b \|_{L^2}^2 + \| \Delta u \|_{L^2}^2  \bigg\} +  M_1(t) \| \nabla u \|_{L^2}^2\,,
\end{equation*}
with $M_1 = C(\eta) \|u\|^2_{L^2} \| \nabla b \|_{L^2}^2 \in L^1(\mathbb{R}_+)$ and $\| M_1 \|_{L^1(\mathbb{R}_+)} = C(\eta, \| u_0\|_{L^2}, \| b_0 \|_{L^2})$.

\medbreak
In the end, putting all these estimates together and choosing $\eta>0$ small enough, we obtain the differential inequality
\begin{equation*}
\frac{1}{2}\,\frac{\rm d}{ \dt}\Big(\|\nabla u\|^2 + \|\nabla b\|^2 \Big)\, +\,\frac{1}{2}\, \Big(\|\Delta u\|^2+ \|\Delta b\|^2\Big)\,\leq\, M_{1+h}(t)\,+\,M_1(t)\,\Big(\|\nabla u\|^2 + \|\nabla b\|^2 \Big)\,.
\end{equation*}
An application of Gr\"onwall's lemma finally yields, for all fixed $T>0$, the inequality
\begin{multline} \label{est:2order_u-b}
\sup_{t\in[0,T]}\Big(\|\nabla u(t)\|^2 + \|\nabla b(t)\|^2 \Big)\,+\,\int^T_0\Big(\|\Delta u(t)\|^2+ \|\Delta b(t)\|^2\Big)\dt \\
\leq\, 2 \left( \| \nabla u_0 \|_{L^2}^2 + \| \nabla b_0 \|_{L^2}^2 + \int_0^T M_{1+h}(t)\,\dt \right)\,\exp\left\{ \int_0^{+\infty} M_1(t)\,\dt \right\}\,.
\end{multline}
Observe that
$$
\int^T_0M_{1+h}(t)\,\dt\,=\,\left\|M_{1+h}\right\|_{L^{1+h}(\R_+)}\,T^{h/(1+h)}\,\leq\,C\,\left(1+T^h\right)\,,
$$
which obviously implies estimate \eqref{est:u-b_higher}, up to the time derivative terms, which will be dealt with in a while.
For the moment, remark that, since 
we have shown $L^2_T(H^2)$ bounds on the velocity field, we can apply Proposition 5.2. of \cite{D} (which is also expressed in a much more thorough form in Theorem 3.33 of \cite{BCD})
to the transport equation for $r$: for all $0 < \gamma < \beta < 1$, we get propagation of the $L^\infty_T(H^{1+\g})$ regularity of $r$, together with estimate \eqref{est:r_higher}.

Let us come back to the proof of \eqref{est:u-b_higher}: it remains us to bound the time derivative terms.
We start by looking at $\partial_t u$. In order to get rid of the pressure term, we apply the Leray projector $\mathbb{P}$, which is the $L^2$-orthogonal projector on the subspace of divergence-free
functions. It can be defined as a Fourier multiplier:
\begin{equation*}
\forall f \in L^2, \quad \what{ \mathbb{P} f } (\xi) = \what{f}(\xi) - \left( \frac{\xi}{|\xi|^2} \cdot \what{f}(\xi) \right) \xi = \left( \frac{\xi^\perp}{|\xi|^2} \cdot \what{f}(\xi) \right) \xi^\perp.
\end{equation*}
In particular, it commutes with differential operators (both on time and space variables).
Applying the Leray projector to the momentum equation in \eqref{EQL} gives
\begin{equation}\label{Leray}
\partial_t u + \mathbb{P} \big[  \D(u \otimes u - b \otimes b) + ru^\perp \big] = \Delta u\,,
\end{equation}
since $\div u=0$. In fact, since $\mathbb{P}$ is a Fourier multiplier associated to a homogeneous function of degree zero, it is a continuous operator over $L^2$;
so we need only estimates on $\Delta u$, $ru$ and $\D (u \otimes u - b \otimes b)$ to conclude \eqref{est:u-b_higher}.

First of all, for $1/p + 1/q = 1/2$, H\"older's and the GN inequalities, combined with \eqref{EQEn}, give
\begin{equation*}
\| r\,u \|_{L^2}\, \leq\, \| r \|_{L^p}\, \| u \|_{L^q}\, \leq\, C\big(q, \|r_0\|_{L^2\cap L^\infty}, \|u_0\|_{L^2}, \| b_0 \|_{L^2}\big)\, \| \nabla u \|_{L^2}^{1 - 2/q}\,=\,C\,M_{1+h}^{1/2}\,,
\end{equation*}
where $M_{1+h}$ has been defined in \eqref{est:ru}. Hence, by taking $q$ large enough, we see that $ru \in L^{2(1+h)}(L^2)$.
Secondly, using that $\div(u\otimes u)\,=\,(u\cdot\nabla)u$, an application of Proposition \ref{PropOP} 
gives
\begin{align}\label{EQQuadT}
\left\| \D \big(u \otimes u\big) \right\|^2_{L^2}\,&\leq\,C\,\| u \|^2_{L^\infty}\,\| \nabla u \|^2_{L^2}\,\leq\, C\,\| u \|_{L^2}\,\| \Delta u \|_{L^2} \| \nabla u \|^2_{L^2}\,.
\end{align}
Integrating over $t \in [0, T]$ and using estimate \eqref{EQEn}, after an application of Young inequality we obtain, for all $h > 0$, the bound
\begin{align*}
\left\| \D \big(u \otimes u \big) \right\|_{L^2_T(L^2)}^2\,&\leq\,C\int^T_0\big(\| u \|^2_{L^2}\,\|\nabla u \|^4_{L^2}+\|\Delta u \|^2_{L^2}\big)\dt \\
&\leq\,C\big(\|(u_0,b_0)\|_{L^2}\big)\,\left(\sup_{[0,T]}\left(\|\nabla u \|^2_{L^2}\right)\,+\,\|\Delta u \|^2_{L^2_T(L^2)}\right)\,.
\end{align*}
In view of \eqref{est:2order_u-b}, this term can be finally bounded by the right-hand side of that inequality.
The same computations with the magnetic field yield
\begin{equation*}
\left\| \D \big(b \otimes b \big) \right\|_{L^2_T(L^2)}^2\,\leq\,C\big(\|(u_0,b_0)\|_{L^2}\big)\,\left(\sup_{[0,T]}\left(\|\nabla b \|^2_{L^2}\right)\,+\,\|\Delta b \|^2_{L^2_T(L^2)}\right)\,,
\end{equation*}
which again can be bounded by the right-hand side of \eqref{EQEn}.
In the end, the combination of all those estimates implies that also $\left\|\d_tu\right\|_{L^2_T(L^2)}$ satisfies \eqref{est:u-b_higher}.

It only remains to find the estimate on $\left\|\d_tb\right\|_{L^2_T(L^2)}$, for which we use the magnetic field equation, namely the third equation in \eqref{EQL}. Estimate \eqref{est:2order_u-b}
already provides us with a bound for $\Delta b$ in the sought space. On the other hand, the quadratic terms can be estimated exactly as the corresponding ones appearing in the momentum equation.
We finally deduce that also $\left\|\d_tb\right\|_{L^2_T(L^2)}$ fulfills \eqref{est:u-b_higher}, and this completes the proof of the proposition.
\end{proof}

\subsection{Existence result}

In this section, we explain how solutions of \eqref{EQL}, whether they be weak solution in the energy space or with the level of regularity described in Proposition \ref{PropO2}, can be constructed.

\begin{prop}\label{PropEx}
Assume that $r_0 \in L^2 \cap L^\infty$ and that $u_0, b_0 \in L^2$ are two divergence-free functions. Then there exists a weak solution $(r, u, b)$ of system \eqref{EQL} related
to those initial data, such that $r \in L^\infty (L^2 \cap L^\infty)$ and\footnote{Given a Banach space $X$, we note $C^0_b(X)$ the space fo continuous and globally bounded functions
on $\mathbb{R}_+$ with values in $X$.} $u, b \in C^0_b (L^2)$, with $\nabla u$ and $\nabla b$ belonging to $L^2(L^2)$. Moreover
this solution satisfies the basic energy inequality \eqref{EQEn} and the conservation of the $L^p$ norms of $r$ in time, for any $p\in[2+\infty]$. 

Suppose now that $r_0 \in H^{1 + \beta}$, for some $\beta > 0$, and that $u_0, b_0 \in H^1$. Then there exists a weak solution $(r, u, b)$ of \eqref{EQL} such that, in addition to the previous properties,
one also has, for all $T > 0$, $r \in C_T^0(H^{1+\gamma})$ for all $0<\g<\min\{1,\beta\}$, and $u, b \in C_T^0(H^1) \cap L^2_T(H^2)$. Moreover, this solution satisfies inequalities \eqref{est:u-b_higher}
and \eqref{est:r_higher}.
\end{prop}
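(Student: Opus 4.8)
The plan is to construct the desired solutions as limits of a regularised version of system \eqref{EQL}, along the lines of the classical existence theory for the $2$-D Navier--Stokes and MHD equations. First I would fix the spectral truncation $J_n$ (the Fourier multiplier cutting off frequencies $|\xi| > n$) and look for solutions $(r_n, u_n, b_n)$, with data $(J_n r_0, J_n u_0, J_n b_0)$, of
\begin{align*}
&\partial_t r_n + \D\big(J_n(r_n\,J_nu_n)\big) = 0\,, \\
&\partial_t u_n + \P J_n\Big[\D\big(J_nu_n\otimes J_nu_n - J_nb_n\otimes J_nb_n\big) + r_n\,J_nu_n^\perp\Big] = J_n\Delta u_n\,, \\
&\partial_t b_n + J_n\D\big(J_nu_n\otimes J_nb_n - J_nb_n\otimes J_nu_n\big) = J_n\Delta b_n\,,
\end{align*}
sought in the band-limited space $J_nL^2$; since $\P$ and the $J_n$'s are $L^2$-bounded and every product involves truncated (hence smooth, square-integrable) factors, the right-hand side is a locally Lipschitz vector field and the Cauchy--Lipschitz theorem yields a unique maximal solution, smooth in the space variable. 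The computation that produced \eqref{EN} carries over verbatim to this system — the self-adjointness of $J_n$, the divergence-free character of $J_nu_n$ and the identity $v^\perp\cdot v = 0$ make all the cancellations work — so $(u_n,b_n)$ is bounded in $L^\infty(L^2)$ and $(\nabla u_n,\nabla b_n)$ in $L^2(L^2)$, uniformly in $n$; in particular the solutions are global. Since $\D(J_nu_n)=0$, the first equation is a genuine transport equation and all $L^p$ norms of $r_n$ are conserved, so $(r_n)_n$ is bounded in $L^\infty(L^2\cap L^\infty)$. Under the stronger hypotheses on the data, I would also run the order-two energy computations of Proposition \ref{PropO2} on the truncated system, obtaining uniform bounds of the form \eqref{est:u-b_higher} and \eqref{est:r_higher}.

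The second step is the passage to the limit $n\to+\infty$. Up to extraction, $u_n\stackrel{*}{\rightharpoonup}u$, $b_n\stackrel{*}{\rightharpoonup}b$ in $L^\infty(L^2)$ with $\nabla u_n\rightharpoonup\nabla u$, $\nabla b_n\rightharpoonup\nabla b$ in $L^2(L^2)$, and $r_n\stackrel{*}{\rightharpoonup}r$ in $L^\infty(L^2\cap L^\infty)$. Using the equations and the Gagliardo--Nirenberg bound $(u_n,b_n)\subset L^4_{\rm loc}(L^4)$, one checks that $(\partial_tu_n)_n$ and $(\partial_tb_n)_n$ are bounded in $L^2_{\rm loc}(H^{-1})$, so the Aubin--Lions--Simon lemma gives strong convergence $u_n\tend u$ and $b_n\tend b$ in $L^2_{\rm loc}(L^2_{\rm loc})$. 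This is the key point: it allows the passage to the limit in every quadratic term $u_n\otimes u_n$, $b_n\otimes b_n$, $u_n\otimes b_n$ (the truncations $J_n$ being harmless), and, for the coupling term, by writing $r_nu_n = r_n(u_n-u) + r_nu$ — the first summand vanishing in $L^1_{\rm loc}$ by the uniform $L^\infty$ bound on $r_n$ and the strong convergence of $u_n$, the second converging weakly since $r_n\stackrel{*}{\rightharpoonup}r$ against $L^1$ and $u$ is fixed. The same applies to $\D(r_nu_n)$ in the mass equation, so $r$ solves $\partial_tr+\D(ru)=0$ with datum $r_0$; the linear terms pass to the limit trivially, and we obtain a weak solution of \eqref{EQL}. (If strong convergence of $r_n$ were needed, it would follow just as in Proposition \ref{PropStrCnv}, since the limit field $u\in L^2_{\rm loc}(H^1)$ lies in $L^2_{\rm loc}(L^2)$ and hence satisfies condition \eqref{EQ13}.)

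Finally I would recover the qualitative properties. Weak lower semicontinuity turns the approximate energy balance into \eqref{EQEn} for $(r,u,b)$; since $r$ is the (unique, by the Di Perna--Lions theory \cite{DL}) solution of the transport equation driven by the divergence-free field $u$, it inherits $\|r(t)\|_{L^p}=\|r_0\|_{L^p}$ for all $p\in[2,+\infty]$. In dimension two the finite-energy bound already legitimates the energy identity for $(u,b)$, so $t\mapsto\|u(t)\|_{L^2}^2+\|b(t)\|_{L^2}^2$ is continuous; together with $u,b\in C^0_w(L^2)$ (from the bounds on $\partial_tu,\partial_tb$) this yields $u,b\in C^0_b(L^2)$. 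For regular data, weak lower semicontinuity transfers the bounds of Proposition \ref{PropO2} to the limit, giving $u,b\in L^\infty_{\rm loc}(H^1)\cap L^2_{\rm loc}(H^2)$ together with \eqref{est:u-b_higher}; the transport estimates of Proposition 5.2 of \cite{D} (or Theorem 3.33 of \cite{BCD}), applied with $u\in L^1_{\rm loc}(\mathrm{Lip})$, propagate the $H^{1+\gamma}$ regularity of $r$ for every $0<\gamma<\min\{1,\beta\}$ and give \eqref{est:r_higher}, while $\partial_tu,\partial_tb\in L^2_{\rm loc}(L^2)$ and $u,b\in L^2_{\rm loc}(H^2)$ give $u,b\in C^0_T(H^1)$. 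The main obstacle I anticipate is precisely securing enough strong compactness of $(u_n,b_n)$ to close the nonlinear terms while keeping the regularisation compatible with the energy structure of \eqref{EQL}, so that the a priori bounds of Proposition \ref{PropO2} genuinely survive the scheme; the transport equation for $r$ is the second delicate point, but it is entirely controlled by the Di Perna--Lions machinery already used in Section \ref{SecStrDen}.
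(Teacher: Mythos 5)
Your overall architecture — Friedrichs-type spectral truncation, energy bounds, Aubin--Lions compactness, passage to the limit in the nonlinearities — is the same as the paper's. The genuine problem lies in how you regularise the continuity equation. You solve $\partial_t r_n + \D\big(J_n(r_n\,J_nu_n)\big)=0$ with $r_n$ band-limited, and you then assert that ``the first equation is a genuine transport equation and all $L^p$ norms of $r_n$ are conserved''. This is false: because of the outer truncation $J_n$, the flux term is \emph{not} $J_nu_n\cdot\nabla r_n$, so the renormalisation $\frac{\rm d}{\dt}\int|r_n|^p=0$ breaks down for $p\neq 2$ (only the $L^2$ norm survives, by self-adjointness of $J_n$ and band-limitedness of $r_n$). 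In particular you have no uniform-in-$n$ bound on $\|r_n\|_{L^\infty}$ (nor on $\|r_n\|_{L^p}$ for any $p>2$). This is not cosmetic: the order-two estimate \eqref{est:ru} for the term $\int r u^\perp\cdot\Delta u$ requires $\|r\|_{L^p}$ for some $p>2$ (H\"older with $1/p+1/q=1/2$, $q<\infty$), so without that bound you cannot run the computations of Proposition \ref{PropO2} on the approximate system, and \eqref{est:u-b_higher} does not survive the scheme. Your step where you split $r_nu_n=r_n(u_n-u)+r_nu$ also invokes ``the uniform $L^\infty$ bound on $r_n$'' which you do not have (that particular step can be rescued using the $L^2$ bound, but the higher-order estimates cannot). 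The a posteriori recovery of $\|r(t)\|_{L^p}=\|r_0\|_{L^p}$ via Di Perna--Lions uniqueness for the \emph{limit} equation is legitimate for the first part of the statement, but it comes too late to feed the uniform second-order estimates on $(u_n,b_n)$.

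The paper avoids this tension by \emph{not} truncating the transport equation: the scheme is staggered. Given $r_{j-1}$ (already known, with all its $L^p$ norms controlled by $\|r_0\|_{L^2\cap L^\infty}$), one solves the finite-dimensional ODE for $(u_j,b_j)$ by Cauchy--Lipschitz in $\mc X_j$, with $r_{j-1}$ appearing as a coefficient in the rotation term; one then solves the \emph{exact} linear transport equation $\partial_t r_j+\D(r_ju_j)=0$ with datum $S_jr_0$ and the now-known smooth divergence-free field $u_j$, so that $\|r_j(t)\|_{L^p}=\|S_jr_0\|_{L^p}\leq C\|r_0\|_{L^p}$ holds for every $p$ and every $j$. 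You cannot keep the transport equation exact \emph{and} treat the fully coupled system as an ODE in a band-limited space (the exact $r_n$ leaves $J_nL^2$), which is precisely why the decoupling is needed. If you adopt this staggered construction, the rest of your argument (uniform first- and second-order bounds, Aubin--Lions, limit in the quadratic terms, weak lower semicontinuity, propagation of $H^{1+\gamma}$ regularity of $r$ via the transport estimates of \cite{D}) goes through as in the paper.
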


The proof of the previous proposition is standard (see e.g. \cite{CDGG} or \cite{DeA-F}); we give here most of the details for reader's convenience.
First of all, we implement Friedrichs scheme and construct smooth solutions to an approximate system. After deriving uniform bounds (of first and second order),
we show that those solutions tend (weakly) to a solution of \eqref{EQL}.

\medbreak
\textbf{Step 1: approximate system.} Let $j \geq 2$ and $A_j$ be the spectral projection operator defined in the following way:
\begin{equation*}
\forall f \in L^2\,, \qquad \mathcal{F}\big[ A_j f \big] (\xi) = \mds{1}_{|\xi| \leq j} \what{f}(\xi)\,.
\end{equation*}
Recall the Leray projector $\mathbb{P}$ from \eqref{Leray}, and that we have set $\mu(1)=\nu(1)=1$.
Set $r_1(t, x) = S_1 r_0(x)$, where $S_1$ is the low frequency cut-off operator defined in \eqref{eq:S_j}. For $j\geq2$,
we consider the sequence of approximate systems
\begin{equation}\label{ApprMHD}
\begin{cases}
\partial_t u_j + \mathbb{P} A_j \D \big(u_j \otimes u_j - b_j \otimes b_j\big) + \mathbb{P} A_j \big[ r_{j-1} u_j^\perp \big] =  \Delta u_j\\[1ex]
\partial_t b_j +  A_j \D \big( u_j \otimes b_j - b_j \otimes u_j\big) =  \Delta b_j\\[1ex]
\D (u_j) = \D(b_j) = 0\,,
\end{cases}
\end{equation}
which we equip with the initial datum $\big(u_j,b_j\big)_{|t=0}\,=\,\big(A_ju_0,A_jb_0\big)$. Given $r_{j-1}$, applying the Cauchy-Lipschitz theorem in the Banach space
\begin{equation*}
\mc X_j = \left\{ f\,\in\,L^2\;\big| \qquad \Supp\what{f} \subset B(0, j)  \right\}
\end{equation*}
gives the existence of a unique  $C^\infty\big(\,]T_-(j), T_+(j)[\,; \mc X_j\big)$ maximal solution.
Then, we compute the function $r_{j}$ by solving the linear transport equation
\begin{equation*}
\begin{cases}
\partial_t r_{j} + \D\big(r_{j} u_{j}\big) = 0 \\[1ex]
\big(r_{j}\big)_{|t = 0} = S_jr_0\,.
\end{cases}
\end{equation*}

We now look for uniform bounds for $\big(r_j,u_j,b_j\big)_j$ in suitable spaces. First of all, using the fact that $r_{j}$ solves a pure transport equation by a divergence-free velocity field $u_j$,
one gathers that
\begin{equation*}
\left\|r_{j}(t)\right\|_{L^2}\,+\,\left\|r_{j}(t)\right\|_{L^\infty}\,=\,\left\| S_j r_0 \right\|_{L^2}\,+\,\left\| S_j r_0 \right\|_{L^\infty}\,\leq\,C\,\|r_0 \|_{L^2\cap L^\infty}\,.
\end{equation*}
for all $t\in[0,T_+(j)[\,$. This implies that $\big(r_j\big)_j\subset L^\infty\big(L^2\cap L^\infty\big)$. Next, we easily see that the maximal solution $\big(u_j,b_j\big)$ satisfies
the basic energy estimate \eqref{EQEn}; it also satisfies the order 2 estimates \eqref{est:u-b_higher}.
Indeed, testing the momentum equation in \eqref{ApprMHD} with $-\Delta A_j u$ (for example), which is both in $\mc X_j$ and divergence-free, gives, for all $0 \leq t < T_+(j)$,
the equality
\begin{equation*}
\frac{1}{2} \frac{\rm d}{ \dt} \int_\Omega\left|\nabla u_j\right|^2 \dx + \int_\Omega \left|\Delta u_j\right| \dx = \int_\Omega \D \big(u_j \otimes u_j - b_j \otimes b_j\big) \cdot \Delta u_j \dx +
\int_\Omega r_{j-1} u_j^\perp \cdot \Delta u_j \dx\,.
\end{equation*}
An analogous relation holds for the magnetic field $b_j$. Then, one can repeat the same computations as in Subsection \ref{ss:en-est} to finally get \eqref{est:u-b_higher}, as claimed.

We now show that the approximate solutions do not blow-up in finite time.
For this, fix $j \geq 2$: the basic energy estimates state that $\| u_j(t) \|_{L^2}$ and $\| b_j(t) \|_{L^2}$ are bounded for $0 \leq t < T_+(j)$, therefore (using the bounds on $\big(r_j\big)_j$ and the
spectral localisation) so are the norms of the time derivatives $\left\| \partial_t u_j (t) \right\|_{L^2}$ and $\left\| \partial_t b_j (t) \right\|_{L^2}$.
Hence, the solution of the ODE system \eqref{ApprMHD} satisfies the Cauchy criterion for $t < T_+(j)$, and necessarily $T_+(j) = +\infty$.

\medbreak
\textbf{Step 2: convergence.} 
For the sake of generality, we prove convergence of the sequence of approximate solutions relying only on the basic energy estimates \eqref{EQEn} and the conservation of $L^p$ norms
for the $r_j$. Specifically, we will be using only the following uniform bounds:
\begin{equation*}
\big(u_j\big)_{j\geq 2}\,, \; \big(b_j\big)_{j\geq 2}\; \subset\;L^\infty(L^2) \cap L^2_{\rm loc}(H^1) \qquad \text{ and } \qquad \big(r_j\big)_{j \geq 2}\, \subset\, L^\infty (L^2 \cap L^\infty)\,.
\end{equation*}
Those bounds yield the following weak convergence properties (up to an extraction): there exists a triplet $(r, u, b) \in L^\infty(L^2 \cap L^\infty) \times L^2_{\rm loc}(H^1) \times L^2_{\rm loc}(H^1)$
such that, for all fixed $T>0$, one has
\begin{equation} \label{conv:r-u-b}
\big(u_j, b_j\big)\,\wtend\, (u, b) \qquad \text{ in }\; L^2_T(H^1) \qquad\qquad \text{ and } \qquad\qquad r_j\, \wtend^*\, r \qquad \text{ in }\; L^\infty(L^2 \cap L^\infty)\,.
\end{equation}

In order to achieve convergence in the non-linear terms, we are going to prove that both $\big(\partial_t u_j\big)_{j}$ and $\big(\partial_t b_j\big)_{j}$ are uniformly bounded in
$L^2_{\rm loc}(H^{-1})$.
Notice that both $\mathbb{P}$ and $A_j$ are continuous for all the $H^s$ topologies (with $s \in \mathbb{R}$). Therefore, by using \eqref{ApprMHD}, we get
\begin{align*}
\left\| \partial_t u_j \right\|_{L^2_T(H^{-1})} + \left\| \partial_t b_j \right\|_{L^2_T(H^{-1})}\,&\leq\,\left\| u_j \right\|_{L^2_T(H^1)} + \left\| b_j(t) \right\|_{L^2_T(H^{1})} \\
&\qquad\quad+\sum_{f,g\in\{u_j,b_j\}} \Big\| \D \big( f \otimes g\big)\Big\|_{L^2_T(H^{-1})} + \| r_{j-1} u_j \|_{L^2_T(H^{-1})}\,.
\end{align*}
The last term is bounded by $\| r_{j-1} u_j \|_{L^2_T(H^{-1})} \leq C_T\| r_{j-1} \|_{L^\infty_T(L^\infty)} \| u_j \|_{L^\infty_T(L^2)}$, so we only have to worry about the quadratic terms.
If $f, g \in L^2_T(H^1)$, then, for all $0 \leq t \leq T$, using the Sobolev embedding $H^1 \subset L^4$ followed by the Cauchy-Schwarz inequality in $L^2_T$, we get
\begin{equation*}
\Big\| \D \big(f \otimes g\big) \Big\|^2_{L^2_T(H^{-1})}\,\leq\,\int_0^T \| f(t) \|^2_{L^4} \| g(t) \|^2_{L^4}\,\dt \leq \| f \|^2_{L^2_T(H^1)} \| g \|^2_{L^2_T(H^1)}\, <\, +\infty\,.
\end{equation*}
These computations show that $\big(\partial_t u_j\big)_{j}$ and $\big(\partial_t b_j\big)_{j}$ are indeed bounded in $L^2_T(H^{-1})$, from which we infer the uniform boundness of 
$\big(u_j\big)_{j}$ and $\big(b_j\big)_{j}$ in $C^{0, 1/2}_T(H^{-1})$.
On the other hand, thanks to the the compact embedding $L^2(K) \subset H^{-1}(K)$ for all compact $K \subset\Omega$, we deduce that, for almost all $0  \leq t \leq T$,
the sequences $\big(u_j(t)\big)_{j}$ and $\big(b_j(t)\big)_{j}$ are relatively compact in $H^{-1}_{\rm loc}$. An application of the Ascoli-Arzel\`a theorem gives then,
for all $T>0$, the strong convergence
\begin{equation*}
\big(u_j, b_j\big)\,\tend\,(u, b) \qquad\qquad \text{ in }\qquad L^\infty_T(H^{-1}_{\rm loc})\,.
\end{equation*}
Because $(u_j)_j$ and $(b_j)_j$ are also bounded in $L^2_T(H^1)$, interpolation between Sobolev spaces gives strong $L^2$ convergence
\begin{equation*}
\big(u_j, b_j\big)\,\tend\,(u, b) \qquad\qquad \text{ in }\qquad L^2_T(L^2_{\rm loc})\,.
\end{equation*}

Next, using the fact that the $r_j$ solve a linear transport equation and arguing exactly as in Subsection \ref{SecStrDen}, we get strong convergence
\begin{equation*}
r_j\, \tend\, r \qquad\qquad \text{ in }\qquad L^2_T(L^2_{\rm loc})\,,
\end{equation*}
which in turn gives convergence of the products $\big(r_j u_{j-1}\big)_j$ and $\big(r_ju_j\big)_j$ in the sense of distributions in $\R_+\times\Omega$.

\textbf{Step 3: weak solutions.} We aim to prove that the triplet $(r, u, b)$, identified in \eqref{conv:r-u-b}, is in fact a weak solution of \eqref{EQL}.
The only terms whose convergence is not completely obvious at this point are the quadratic terms in $u_j$ and $b_j$.
Let $\phi \in \mathcal{D}\big([0, T[\,\times \Omega;\mathbb{R}^2\big)$ be a divergence-free test function. We will only prove the convergence of
\begin{equation*}
\int_0^T \int_\Omega A_j \mathbb{P}\big(u_j \otimes b_j\big) : \nabla \phi\, \dx \dt \,=\, \int_0^T \int_\Omega \big(u_j \otimes b_j\big) : A_j \nabla \phi\, \dx \dt\,,
\end{equation*}
all other quadratic terms being similar. Taking the difference between the previous integral and the one we desire, we get
\begin{multline*}
\left| \int_0^T \int_\Omega \bigg\{ (u_j \otimes b_j): A_j \nabla \phi  -  (u \otimes b): \nabla \phi \bigg\} \dx \dt \right| \\
\leq  \left| \int_0^T \int_\Omega (u_j \otimes b_j): (A_j - I) \nabla \phi\, \dx \dt \right| + \left| \int_0^T \int_\Omega \big(u_j \otimes b_j - u \otimes b\big): \nabla \phi \,\dx \dt \right|\,.
\end{multline*}
Using the Sobolev embedding $H^1 \subset L^4$, we see that the first integral on the right-hand side is bounded by 
\begin{equation*}
\left| \int_0^T \int_\Omega (u_j \otimes b_j): (A_j - I) \nabla \phi \dx \dt \right| \leq \| u_j \|_{L^2_T (L^4)} \| b_j \|_{L^2_T (L^4)}\| (A_j - I)\nabla \phi \|_{L^\infty_T(L^2)}\,,
\end{equation*}
which converges to $0$ for $j\ra +\infty$. This comes from the uniform bound (with respet to time)
\begin{equation*}
\| (A_j - I)\nabla \phi (t) \|_{L^2}^2 \leq \frac{1}{j^2} \left\| \nabla^2 \phi (t) \right\|_{L^2}^2 \leq \frac{C}{j^2}\,.
\end{equation*}
For the other integral, we simply recall that $\big(u_j, b_j\big)\, \tend\, (u, b)$ in $L^2_T(L^2_{\rm loc})$, so that we have strong convergence of the tensor products
\begin{equation}
u_j \otimes b_j \, \tend\, u \otimes b \qquad \qquad \text{ in } \qquad L^1_T(L^1_{\rm loc}).
\end{equation}
Thus, we have proved convergence for the quadratic term:
\begin{equation*}
\int_0^T \int_\Omega A_j \mathbb{P}\big(u_j \otimes b_j\big) : \nabla \phi\, \dx \dt \,\tend_{j\rightarrow +\infty}\, \int_0^T \int_\Omega (u \otimes b) : \nabla \phi \,\dx \dt\,.
\end{equation*}

In the end, we have shown that $(r, u, b)$ is indeed a weak solution of \eqref{EQL}.
Finally, the Banach-Steinhaus theorem makes sure that the inequalities of Proposition \ref{PropO2} are carried from the approximate solutions to the limit point $(r, u, b)$.
This completes the proof of Proposition \ref{PropEx}.

\subsection{Uniqueness for the limit system}

The proof of the uniqueness for system \eqref{EQL} is based on stability estimates in the energy space. Hence, we require enough regularity to perform those energy estimates, without any
attempt of sharpness in our statement (very likely, a well-posedness result like the one in \cite{P-Z-Z} may be proved also for our system).

\begin{prop}\label{PropUni}
Fix $0 < \beta < 1$. Let $\big(r_{0, 1}, u_{0, 1}, b_{0, 1}\big) \in H^{1+\beta}\times H^1 \times H^1$ and
$\big(r_{0, 2}, u_{0, 2}, b_{0, 2}\big) \in \big(L^2 \cap L^\infty\big) \times L^2 \times L^2$ be two sets of initial data, and for $i=1,2$, consider $\big(r_i, u_i, b_i\big)$ two associated solutions
of \eqref{EQL} such that:
\begin{enumerate}[(1)]
\item $r_i\in L^\infty(L^2 \cap L^\infty)$ and $u_i, b_i \in L^\infty(L^2)$, with $\nabla u_i,\nabla b_i \in L^2(L^2)$;
\item for all $T>0$, $u_1, b_1 \in L^\infty_T(H^1) \cap L^2_T(H^2)\cap W^{1,2}_T(L^2)$ and $r_1\in L^\infty_T(H^{1+\gamma})$ for all $0 < \gamma < \beta$.
\end{enumerate}
Define $\delta r = r_2 - r_1$, $\delta u = u_2 - u_1$ and $\delta b = b_2 - b_1$, and note by $\delta r_0$, $\delta u_0$, $\delta b_0$ the same quantities computed on the initial data.

Then, we have the following stability estimate: for every $0 \leq t \leq T$,
\begin{multline}\label{EQUni}
\| \delta u(t) \|_{L^2}^2\, +\, \| \delta b(t) \|_{L^2}^2\, +\, \| \delta r(t) \|_{L^2}^2\,+\,\int_0^t \bigg\{\| \nabla \delta u \|_{L^2}^2\, +\, \| \nabla \delta b \|_{L^2}^2\bigg\}\, \text{d}s \\
\leq\, C\, \bigg(  \| \delta u_0 \|_{L^2}^2\, +\, \| \delta b_0 \|_{L^2}^2 \,+\, \| \delta r_0 \|_{L^2}^2 \bigg)\,,
\end{multline}
where the constant $C > 0$ depends on $T$, $\mu(1)$, $\nu(1)$ and $\big(\| r_{0,1} \|_{H^{1+\beta}}, \| u_{0,1} \|_{H^1},\| b_{0,1} \|_{H^1}\big)$.
\end{prop}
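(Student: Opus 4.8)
The plan is to prove \eqref{EQUni} as a \emph{weak--strong} stability estimate: one runs an energy estimate on the system satisfied by the differences $\delta r$, $\delta u$, $\delta b$, exploiting the higher regularity of the \emph{reference} solution $(r_1,u_1,b_1)$ listed in point $(2)$ while keeping $(r_2,u_2,b_2)$ merely in the energy space. Subtracting the two copies of \eqref{EQL} and using $\div u_i=\div b_i=0$, the differences solve
\[
\partial_t\delta r+\div(\delta r\,u_2)+\div(r_1\,\delta u)=0
\]
together with the natural difference versions of the momentum and magnetic equations, where the convective term reads $\div(\delta u\otimes u_2+u_1\otimes\delta u)$, the Lorentz term $\div(\delta b\otimes b_2+b_1\otimes\delta b)$, the Coriolis term $\delta r\,u_2^\perp+r_1\,\delta u^\perp$, and similarly for the induction term. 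I would then test the $\delta u$, $\delta b$ and $\delta r$ equations with $\delta u$, $\delta b$ and $\delta r$ respectively and add the three identities. Rigorously this is to be read in the standard way (use $u_1,b_1$ as test functions in the weak formulation of solution $2$ and $u_2,b_2$ in that of solution $1$, and combine with the energy identity for the regular solution $1$ and the energy inequality \eqref{EQEn} for solution $2$), which is legitimate because $u_1,b_1\in L^2_T(H^2)\cap W^{1,2}_T(L^2)$.

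In the resulting identity many terms vanish for structural reasons: the pressure term and $\tfrac12\nabla(|b_2|^2-|b_1|^2)$ drop since $\div\delta u=0$; the Coriolis self-interaction $\int_\Omega r_1\,\delta u^\perp\cdot\delta u\,\dx$ vanishes pointwise because $\delta u^\perp\perp\delta u$; the transport self-interactions $\int_\Omega(u_2\cdot\nabla)\delta u\cdot\delta u$, $\int_\Omega(u_2\cdot\nabla)\delta b\cdot\delta b$, $\int_\Omega(u_2\cdot\nabla)\delta r\,\delta r$ vanish after integration by parts since $\div u_2=0$; and, crucially, the pair $\int_\Omega(b_1\cdot\nabla)\delta b\cdot\delta u\,\dx+\int_\Omega(b_1\cdot\nabla)\delta u\cdot\delta b\,\dx$, coming from the Lorentz force in the $\delta u$ equation and the induction term in the $\delta b$ equation, equals $\int_\Omega b_1\cdot\nabla(\delta u\cdot\delta b)\,\dx=0$ --- this is the same cancellation as in the basic MHD energy identity of Subsection~\ref{ss:weak}. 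After these simplifications every surviving term on the right-hand side is trilinear with at least two ``difference'' factors among $\{\delta u,\delta b,\delta r,\nabla\delta u,\nabla\delta b\}$.

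It then remains to bound each surviving term by $\eta\big(\|\nabla\delta u\|_{L^2}^2+\|\nabla\delta b\|_{L^2}^2\big)+M(t)\big(\|\delta u\|_{L^2}^2+\|\delta b\|_{L^2}^2+\|\delta r\|_{L^2}^2\big)$ with $M\in L^1_{\rm loc}(\R_+)$; then fixing $\eta$ small absorbs the viscous/resistive gain into the left-hand side, and Grönwall's lemma closes the argument. All such bounds come from Hölder's inequality, the 2D Gagliardo--Nirenberg inequality $\|f\|_{L^q}\leq C\|f\|_{L^2}^{2/q}\|\nabla f\|_{L^2}^{1-2/q}$ applied to the difference fields, and Young's inequality. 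The weights $M$ are built from $\|\nabla u_1\|_{L^2}^2$, $\|\nabla b_1\|_{L^2}^2$, $\|\nabla u_2\|_{L^2}^2$, $\|\nabla b_2\|_{L^2}^2$ --- all globally time-integrable by \eqref{EQEn} --- and from a power of $\|\nabla r_1\|_{L^p}$ for some finite $p$, which is locally bounded in time thanks to $r_1\in L^\infty_{\rm loc}(H^{1+\gamma})$ (Proposition~\ref{PropO2}) and $H^\gamma\hookrightarrow L^p$. Typical instances: the terms carrying a gradient of the reference fields are controlled by $\big|\int(\delta u\otimes\delta u):\nabla u_1\big|\leq\|\delta u\|_{L^4}^2\|\nabla u_1\|_{L^2}\leq C\|\delta u\|_{L^2}\|\nabla\delta u\|_{L^2}\|\nabla u_1\|_{L^2}$ and, after one integration by parts, $\big|\int(\delta b\otimes\delta b):\nabla u_2\big|$ is controlled analogously by distributing one spare factor $\|\nabla\delta b\|_{L^2}$; the Coriolis error $\int_\Omega\delta r\,(u_2^\perp\cdot\delta u)\,\dx$ is estimated by $\|\delta r\|_{L^2}\|u_2\|_{L^4}\|\delta u\|_{L^4}$ followed by Gagliardo--Nirenberg and Young.

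The main obstacle --- and the reason the conclusion is a weak--strong, rather than an unconditional, uniqueness --- is the density difference term $\int_\Omega(\nabla r_1\cdot\delta u)\,\delta r\,\dx$ arising from $\div(r_1\delta u)$ when testing the $\delta r$ equation with $\delta r$: since $r_2$ lies only in $L^2\cap L^\infty$, there is no control on $\nabla\delta r$, hence no symmetrization, and the gradient must stay on the \emph{regular} density $r_1$. One estimates $\int(\nabla r_1\cdot\delta u)\delta r\leq\|\nabla r_1\|_{L^p}\|\delta u\|_{L^q}\|\delta r\|_{L^2}$ with $\tfrac1p+\tfrac1q=\tfrac12$, $p$ large, uses $\|\delta u\|_{L^q}\leq C\|\delta u\|_{L^2}^{2/q}\|\nabla\delta u\|_{L^2}^{1-2/q}$ and Young's inequality; since the exponents $2/q$ on $\|\delta u\|_{L^2}$ and $1$ on $\|\delta r\|_{L^2}$ add up to $2$ after Young, this produces exactly $\eta\|\nabla\delta u\|_{L^2}^2+C\|\nabla r_1\|_{L^p}^{\alpha}\big(\|\delta u\|_{L^2}^2+\|\delta r\|_{L^2}^2\big)$ with $\|\nabla r_1\|_{L^p}^{\alpha}\in L^\infty_{\rm loc}(\R_+)$. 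Summing the three energy estimates and applying Grönwall on $[0,t]$ then yields \eqref{EQUni}. Finally, this estimate gives the uniqueness assertions of the theorem: with $\delta r_0=\delta u_0=\delta b_0=0$ one gets that any energy-space solution coincides with the regular one, and when both solutions are as regular as in Proposition~\ref{PropO2} either may play the role of solution $1$, so \eqref{EQL} is well posed at that level of regularity; existence at both levels is the content of Proposition~\ref{PropEx}.
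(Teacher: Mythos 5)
Your proposal is correct and follows essentially the same route as the paper: subtract the two systems, test with $\delta u$, $\delta b$, $\delta r$, exploit the pressure/Coriolis/transport cancellations together with the Lorentz--induction cancellation inherited from the basic MHD energy identity, isolate $\int_\Omega(\nabla r_1\cdot\delta u)\,\delta r\,\dx$ as the term requiring $r_1\in H^{1+\gamma}$ (hence $\nabla r_1\in L^p$ for some $p>2$), and close with Gagliardo--Nirenberg, Young and Gr\"onwall. The only (harmless) deviation is that your splitting of some bilinear terms leaves gradients on the weak solution $(u_2,b_2)$ --- e.g.\ weights like $\|\nabla u_2\|_{L^2}^2$ and the Coriolis cross term $\int_\Omega\delta r\,u_2^\perp\cdot\delta u\,\dx$ --- so your Gr\"onwall constant also depends on the energy of solution $2$, whereas the paper arranges all such gradients on the regular triplet $(r_1,u_1,b_1)$ and thus obtains the constant exactly as stated, depending only on the data of solution $1$.
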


Our statement is pretty much a \emph{weak-strong uniqueness} result. 
In particular, we deduce uniqueness of the solutions in the energy space, given regular initial data.

\begin{cor}\label{CorWPL}
Consider $0 < \beta < 1$ and $(r_0, u_0, b_0) \in H^{1 + \beta} \times H^1 \times H^1$. For that initial datum, there is exactly one solution $(r, u, b)$ of \eqref{EQL} in the energy space,
that is such that $r \in L^\infty(L^2 \cap L^\infty)$ and  $u, b \in L^\infty(L^2)$, with $\nabla u, \nabla b\in L^2(H^1)$.
\end{cor}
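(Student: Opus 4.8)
The plan is to combine the existence result of Proposition~\ref{PropEx} with the weak-strong stability estimate of Proposition~\ref{PropUni}.

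First, since $(r_0,u_0,b_0)\in H^{1+\beta}\times H^1\times H^1$, the second part of Proposition~\ref{PropEx} provides a weak solution $(r_1,u_1,b_1)$ of \eqref{EQL} enjoying the additional regularity $r_1\in C^0_T(H^{1+\gamma})$ for all $0<\gamma<\min\{1,\beta\}$ and $u_1,b_1\in C^0_T(H^1)\cap L^2_T(H^2)$, for every $T>0$; moreover, arguing as at the end of the proof of Proposition~\ref{PropO2} (Leray-projected equation \eqref{Leray} for $u_1$, magnetic field equation for $b_1$, together with the second-order bounds \eqref{est:u-b_higher}), one gets $u_1,b_1\in W^{1,2}_T(L^2)$. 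Hence $(r_1,u_1,b_1)$ satisfies both hypotheses (1) and (2) of Proposition~\ref{PropUni}. This already proves the existence part of the statement, and it also exhibits a solution fit to play the role of the \emph{strong} solution in the stability estimate.

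Next, for uniqueness, let $(r_2,u_2,b_2)$ be \emph{any} solution of \eqref{EQL} in the energy space associated with the same data, i.e. with $r_2\in L^\infty(L^2\cap L^\infty)$ and $u_2,b_2\in L^\infty(L^2)$, $\nabla u_2,\nabla b_2\in L^2(L^2)$. Reading the initial datum both as an element of $H^{1+\beta}\times H^1\times H^1$ (attached to $(r_1,u_1,b_1)$) and of $(L^2\cap L^\infty)\times L^2\times L^2$ (attached to $(r_2,u_2,b_2)$), Proposition~\ref{PropUni} applies with $\delta r_0=\delta u_0=\delta b_0=0$, so that estimate \eqref{EQUni} gives, for every $T>0$ and every $0\leq t\leq T$,
\begin{equation*}
\| \delta u(t) \|_{L^2}^2 + \| \delta b(t) \|_{L^2}^2 + \| \delta r(t) \|_{L^2}^2 \leq 0\,,
\end{equation*}
whence $(r_2,u_2,b_2)\equiv(r_1,u_1,b_1)$ almost everywhere on $\R_+\times\Omega$. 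Since any two energy-space solutions sharing the datum $(r_0,u_0,b_0)$ must each coincide with $(r_1,u_1,b_1)$, they coincide with one another, which is the asserted uniqueness.

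The whole argument is essentially bookkeeping of hypotheses, so there is no genuine obstacle beyond verifying that the solution produced by Proposition~\ref{PropEx} meets every requirement of Proposition~\ref{PropUni}; the only mildly delicate checkpoint is the membership $u_1,b_1\in W^{1,2}_T(L^2)$, which, as noted above, is already contained in the proof of Proposition~\ref{PropO2}.
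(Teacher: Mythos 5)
Your argument is correct and is exactly the one the paper intends: the corollary is stated as an immediate consequence of the weak-strong stability estimate of Proposition \ref{PropUni} applied with the regular solution furnished by Proposition \ref{PropEx} as the ``strong'' one and $\delta r_0=\delta u_0=\delta b_0=0$. Your observation that the membership $u_1,b_1\in W^{1,2}_T(L^2)$ is the only hypothesis needing an explicit check, and that it is supplied by estimate \eqref{est:u-b_higher}, is the right bookkeeping.
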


Let us now prove the previous proposition.

\begin{proof}[Proof of Proposition \ref{PropUni}]
We start by remarking that the existence of the two sets of solutions $\big(r_i, u_i, b_i\big)_{i=1,2}$, with the claimed level of regularity, is a consequence of Proposition \ref{PropEx}.

In order to prove inequality \eqref{EQUni}, we take the difference between the equation solved by $\big(r_2, u_2, b_2\big)$ and the one solved by $\big(r_1, u_1, b_1\big)$: this gives
\begin{equation*}
\begin{cases}
\partial_t \delta u + (u_2 \cdot \nabla) \delta u + (\delta u \cdot \nabla) u_1 + \nabla\Pi + r_2 \delta u^\perp + \delta r u_1^\perp
= \Delta \delta u + (\delta b \cdot \nabla ) b_1 + ( b_2 \cdot \nabla) \delta b\\[1ex]
\partial_t\delta b + (\delta u \cdot \nabla) b_1 + (u_2 \cdot \nabla) \delta b = (\delta b \cdot \nabla) u_1 + (b_2 \cdot \nabla) \delta u + \Delta\delta b\\[1ex]
\partial_t\delta r + (u_2 \cdot \nabla) \delta r = - \delta u \cdot \nabla r_1\\[1ex]
\D (\delta u) = \D (\delta b) = 0\,,
\end{cases}
\end{equation*}
where we have set $\Pi\,:=\,\pi_2 - \pi_1 + \frac{1}{2}|b_2|^2 - \frac{1}{2}|b_1|^2$ in the first equation. Recall that we are taking $\nu(1)=\mu(1)=1$ for simplicity of presentation.

Omitting (for the sake of brevity) a standard regularisation process, let us perform energy estimates directly on the previous system.
So, test the first equation with $\delta u$, the second one with $\delta b$ and the third one with $\delta r$: one gets
\begin{align}
&\frac{1}{2} \frac{\text{d}}{\dt} \int_\Omega |\delta u|^2 \dx + \int_\Omega (u_2 \cdot \nabla ) \delta u \cdot \delta u \dx + \int_\Omega (\delta u \cdot \nabla) u_1 \cdot \delta u \dx + 
\int_\Omega \delta r u_1^\perp \cdot \delta u \dx \label{U1} \\
&\qquad\qquad\qquad\qquad\qquad\qquad
+ \int_\Omega |\nabla \delta u|^2 \dx = \int_\Omega (\delta b \cdot \nabla) b_1 \cdot \delta u \dx + \int_\Omega (b_2 \cdot \nabla) \delta b \cdot \delta u \dx  \nonumber \\[1ex]
&\frac{1}{2} \frac{\rm d}{\dt} \int_\Omega |\delta b|^2 \dx + \int_\Omega (\delta u \cdot \nabla) b_1 \cdot \delta b \dx + \int_\Omega (u_2 \cdot \nabla) \delta b \cdot \delta b \dx +
 \int_\Omega |\nabla \delta b|^2 \dx \label{U2}\\
&\qquad\qquad\qquad\qquad\qquad\qquad\qquad\qquad
= \int_\Omega (\delta b \cdot \nabla) u_1 \cdot \delta b \dx + \int_\Omega (b_2 \cdot \nabla) \delta u \cdot \delta b \dx \nonumber \\[1ex]
&\frac{1}{2} \frac{\rm d}{\dt} \int_\Omega |\delta r|^2 \dx + \int_\Omega (u_2 \cdot \nabla \delta r) \delta r \dx = - \int_\Omega (\delta u \cdot \nabla r_1 ) \delta r \dx\,.\label{U3}
\end{align}
An integraton by parts shows that the second integral in \eqref{U1}, the third in \eqref{U2} and the second in \eqref{U3} are all equal to zero.
Next, note that the last integrals in \eqref{U1} and \eqref{U2} are opposite. Therefore, by adding the three equations together, we gather
\begin{multline}\label{U4}
\frac{1}{2} \frac{\rm d }{\dt} \int_\Omega \bigg\{ |\delta u|^2 + |\delta b |^2 + |\delta r|^2  \bigg\} \dx + \int_\Omega \bigg\{ |\nabla \delta u|^2 +  |\nabla b|^2  \bigg\} \dx  \\
\leq \left|  \int_\Omega (\delta u \cdot \nabla) u_1 \cdot \delta u \dx \right| +      \left| \int_\Omega \delta r u_1^\perp \cdot \delta u \dx  \right| +   
\left|  \int_\Omega (\delta u \cdot \nabla r_1 ) \delta r \dx \right|\\
+      \left| \int_\Omega (\delta u \cdot \nabla) b_1 \cdot \delta b \dx  \right|  +  \left| \int_\Omega (\delta b \cdot \nabla) u_1 \cdot \delta b \dx  \right|  + 
\left| \int_\Omega (\delta b \cdot \nabla) b_1 \cdot \delta u \dx  \right|\,.
\end{multline}

The first three integrals, which do not involve the magnetic field, can be dealt with as in \cite{FG} (see Paragraph 4.4.2 therein). We briefly summarise the computations.
Firstly, using in turn the H\"older, the GN and Young's inequalities with exponents $1/4 + 3/4 = 1$, we infer
\begin{align*}
\left| \int_\Omega (\delta u \cdot \nabla) u_1 \cdot \delta u \dx    \right| & \leq \| \delta u \|_{L^4} \| u_1 \|_{L^4} \| \nabla \delta u \|_{L^2}\; \leq\;
C \| \delta u \|_{L^2}^{1/2} \| \nabla \delta u \|_{L^2}^{3/2} \| u_1 \|_{L^4}\\
& \leq \eta \| \nabla \delta u \|_{L^2}^2 + C(\eta) \| u_1 \|_{L^4}^4 \| \delta u \|_{L^2}^2\;=\;\eta \| \nabla \delta u \|_{L^2}^2 + C(\eta) M_1 \| \delta u \|_{L^2}^2\,.
\end{align*}
Notice that the GN inequality again gives that $M_1(t) = \| u_1 (t) \|_{L^4}^4 \leq C \| u_1(t) \|_{L^2}^{2} \| \nabla u_1(t) \|_{L^2}^{2}$, hence $M_1\in L^1 (\mathbb{R}_+)$
is an integrable function.

Next, making use of Proposition \ref{PropOP}, we have
\begin{equation*}
\left| \int_\Omega \delta r u_1^\perp \cdot \delta u \dx  \right| \leq \| u_1 \|_{L^\infty} \left( \| \delta u \|_{L^2}^2 + \| \delta r \|_{L^2}^2\right)\,=\,
N_4\,\left( \| \delta u \|_{L^2}^2 + \| \delta r \|_{L^2}^2\right)\,,
\end{equation*}
with $N_4(t) = \| u_1(t) \|_{L^\infty} \leq C \| u_1(t)\|_{L^2}^{1/2} \| \Delta u_1(t) \|_{L^2}^{1/2} \in L^4_T$ for any fixed $T>0$.

As for the third integral, we use the fact that $\nabla r_1 \in L^\infty_T(H^\gamma)$ for some $\gamma > 0$. By fractional Sobolev embedding
(see equation \eqref{Emb} in the appendix), we know that $\nabla r_1 \in L^\infty_T(L^p)$ for some $p > 2$. Let $q$ be an exponent such that $1/p + 1/q = \frac{1}{2}$.
Applying the GN inequality first, and then Young's inequality with exponents $1/(1 - 2/p) + 1/(2/p) = 1$ gives
\begin{align*}
\left|  \int_\Omega \delta u  \cdot \nabla r_1 \, \delta r \dx \right| & \leq \| \nabla r_1 \cdot \delta u \|_{L^2}^2 + \| \delta r \|_{L^2}^2 \leq \| \nabla r_1 \|_{L^p}^2 \| \delta u \|_{L^q}^2 +  \| \delta r \|_{L^2}^2 \\
& \leq C \big(p,\| r_{0, 1} \|_{H^{1+\beta}}, \|u_{0, 1}\|_{H^1}, \|b_{0, 1}\|_{H^1} \big) \| \delta u \|_{L^2}^{4/p} \| \nabla \delta u \|_{L^2}^{2 \left( 1 - 2/p \right)} + \| \delta r \|_{L^2}^2 \\
& \leq \eta \| \nabla \delta u \|_{L^2}^2 + C \big(\eta, \beta, \| r_{0, 1} \|_{H^{1+\beta}}, \|u_{0, 1}\|_{H^1}, \|b_{0, 1}\|_{H^1} \big) \| \delta u \|_{L^2}^2 + \| \delta r \|_{L^2}^2.
\end{align*}

We still have to handle three integrals, which involve the magnetic field. Firstly, integration by parts gives 
\begin{align*}
\left|  \int_\Omega (\delta u \cdot \nabla) b_1 \cdot \delta b \dx \right| & = \left|  \int_\Omega (\delta u \cdot \nabla)  \delta b \cdot b_1 \dx \right| \leq \| \delta u \|_{L^4} \| \nabla \delta b \|_{L^2} \| b_1 \|_{L^4}\\
& \leq \eta \| \nabla \delta b \|_{L^2}^2 + C(\eta) \| \delta u \|_{L^4}^2 \| b_1 \|_{L^4}^2\,\leq\,\eta \| \nabla \delta b \|_{L^2}^2 + \| \delta u \|_{L^2} \| \nabla \delta u \|_{L^2} \| b_1 \|_{L^4}^2 \\
&\leq \eta \big(  \| \nabla u \|_{L^2}^2 + \| \nabla b \|_{L^2}^2  \big) + M_1(t) \| \delta b \|_{L^2}^2\,,
\end{align*}
where we have also used the GN inequality in the second line, and Young's inequality in order to get the last inequality, and where we have set
$M_1(t) = C(\eta) \| b_1(t) \|_{L^4}^4 \in L^1(\R_+)$, because of the same bounds exhibited above for $\|u_1\|_{L^4}^4$.
The last integral in \eqref{U4} can be treated in the same way: after integration by parts, we get
\begin{equation*}
\left| \int_\Omega (\delta b \cdot \nabla) b_1 \cdot \delta u \dx  \right| = \left| \int_\Omega (\delta b \cdot \nabla)  \delta u \cdot b_1 \dx  \right| \leq
\| \delta u \|_{L^4} \| \nabla \delta b \|_{L^2} \| b_1 \|_{L^4} \leq M_1(t) \| \delta b \|_{L^2}^2\,.
\end{equation*}
Finally, for the remaining term we can apply one last time the GN inequality: we infer
\begin{align*}
\left| \int_\Omega (\delta b \cdot \nabla) u_1 \cdot \delta b \dx  \right| & \leq \| \nabla u_1 \|_{L^2} \| \delta b \|_{L^4}^2 \leq C \| \nabla u_1 \|_{L^2} \| \delta b \|_{L^2} \| \nabla \delta b \|_{L^2}\\
& \leq \eta \| \nabla \delta b \|_{L^2}^2 + C(\eta) \| \nabla u_1 \|_{L^2}^2 \| \delta b \|_{L^2}^2 \leq \eta \| \nabla \delta b \|_{L^2}^2 + M_1(t) \| \delta b \|_{L^2}^2,
\end{align*}
where $M_1(t) = C(\eta) \| \nabla u_1(t) \|_{L^2}^2 \in L^1(\R_+)$.

Define now
\begin{equation*}
E(t)\, :=\, \| \delta u(t) \|_{L^2}^2 + \| \delta b(t) \|_{L^2}^2 + \| \delta r(t) \|_{L^2}^2\qquad\mbox{ and }\qquad
E_0\, :=\, \| \delta u_0 \|_{L^2}^2 + \| \delta b_0 \|_{L^2}^2 + \| \delta r_0 \|_{L^2}^2\,.
\end{equation*}
Putting all the previous bounds together and choosing $\eta$ so small that the gradient terms can be absorbed in the left-hand side, from \eqref{U4} we arrive at the differential inequality
\begin{equation}\label{Gron}
\frac{1}{2} \frac{\rm d}{\dt} E(t)+ 
\frac{1}{2} \int_\Omega \bigg\{ |\nabla \delta u|^2 + |\nabla \delta b|^2 \bigg\} \dx\,\leq\,N_1(t) E(t)\,,
\end{equation}
where $N_1(t) \in L^1_T$ is the sum of all the functions $M_1\in L^1(\R_+)$, $N_4(t) \in L^4_T$ in the previous inequalities.
Therefore, Gr\"onwall's lemma implies that, for all $t\geq0$, one has
\begin{equation*}
E(t)\,\leq\,C(t)\,E_0\,.
\end{equation*}
Coming back to \eqref{Gron}, we finally get \eqref{EQUni}. The proposition is then proved.
\end{proof}

\appendix

\section{Appendix -- Fourier and harmonic analysis toolbox} \label{app:LP}

We recall here the main ideas of Littlewood-Paley theory, which we exploited in the previous analysis.
We refer e.g. to Chapter 2 of \cite{BCD} for details.
For simplicity of exposition, let us deal with the $\R^d$ case; however, the whole construction can be adapted also to the $d$-dimensional torus $\T^d$.

\medbreak
First of all, let us introduce the so called ``Littlewood-Paley decomposition'', based on a non-homogeneous dyadic partition of unity with
respect to the Fourier variable. 
We fix a smooth radial function $\chi$ supported in the ball $B(0,2)$, equal to $1$ in a neighborhood of $B(0,1)$
and such that $r\mapsto\chi(r\,e)$ is nonincreasing over $\R_+$ for all unitary vectors $e\in\R^d$. Set
$\varphi\left(\xi\right)=\chi\left(\xi\right)-\chi\left(2\xi\right)$ and
$\vphi_j(\xi):=\vphi(2^{-j}\xi)$ for all $j\geq0$.

The dyadic blocks $(\Delta_j)_{j\in\Z}$ are defined by\footnote{Throughout we agree  that  $f(D)$ stands for 
the pseudo-differential operator $u\mapsto\mc{F}^{-1}[f(\xi)\,\what u(\xi)]$.} 
$$
\Delta_j\,:=\,0\quad\mbox{ if }\; j\leq-2,\qquad\Delta_{-1}\,:=\,\chi(D)\qquad\mbox{ and }\qquad
\Delta_j\,:=\,\varphi(2^{-j}D)\quad \mbox{ if }\;  j\geq0\,.
$$
We  also introduce the following low frequency cut-off operator:
\begin{equation} \label{eq:S_j}
S_ju\,:=\,\chi(2^{-j}D)\,=\,\sum_{k\leq j-1}\Delta_{k}\qquad\mbox{ for }\qquad j\geq0\,.
\end{equation}
Note that the operator $S_j$ is a convolution operator with a function $K_j(x) = 2^{dj}K_1(2^j x) = \mathcal{F}^{-1}[\chi (2^{-j} \xi)] (x)$ of constant $L^1$ norm,
and hence defines a continuous operator for the $L^p \longrightarrow L^p$ topologies, for any $1 \leq p \leq +\infty$.

The following classical property holds true: for any $u\in\mc{S}'$, then one has the equality~$u=\sum_{j}\Delta_ju$ in the sense of $\mc{S}'$.
Let us also mention the so-called \emph{Bernstein inequalities}, which explain the way derivatives act on spectrally localized functions.
  \begin{lemma} \label{l:bern}
Let  $0<r<R$.   A constant $C$ exists so that, for any nonnegative integer $k$, any couple $(p,q)$ 
in $[1,+\infty]^2$, with  $p\leq q$,  and any function $u\in L^p$,  we  have, for all $\lambda>0$,
$$
\displaylines{
{\Supp}\, \widehat u \subset   B(0,\lambda R)\quad
\Longrightarrow\quad
\|\nabla^k u\|_{L^q}\, \leq\,
 C^{k+1}\,\lambda^{k+d\left(\frac{1}{p}-\frac{1}{q}\right)}\,\|u\|_{L^p}\;;\cr
{\Supp}\, \widehat u \subset \{\xi\in\R^d\,|\, r\lambda\leq|\xi|\leq R\lambda\}
\quad\Longrightarrow\quad C^{-k-1}\,\lambda^k\|u\|_{L^p}\,
\leq\,
\|\nabla^k u\|_{L^p}\,
\leq\,
C^{k+1} \, \lambda^k\|u\|_{L^p}\,.
}$$
\end{lemma}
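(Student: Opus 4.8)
The plan is to derive both inequalities from Young's convolution inequality applied to suitably rescaled kernels, exploiting the spectral localization of $u$ together with the scaling behavior of Fourier multipliers. The only delicate point is to arrange the constants in the stated geometric form, so I would keep careful track of the dependence on the derivative order $k$; everything else is routine scaling. Throughout I fix auxiliary cutoff functions adapted to the two geometric situations.

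\textbf{First inequality (support in a ball).} I would split the statement into the zeroth-order embedding ($k=0$) and the pure derivative gain ($p=q$), then combine them. For the embedding, fix $\psi\in C^\infty_c(\R^d)$ with $\psi\equiv 1$ on $B(0,R)$. Since $\Supp\what u\subset B(0,\lambda R)$, one has $\what u(\xi)=\psi(\xi/\lambda)\,\what u(\xi)$, hence $u=K_\lambda * u$ with $K_\lambda(x)=\lambda^d G(\lambda x)$ and $G=\mc F^{-1}\psi\in\mc S(\R^d)$. Choosing $s$ so that $1+1/q=1/p+1/s$ (admissible because $p\le q$), Young's inequality gives $\|u\|_{L^q}\le\|K_\lambda\|_{L^s}\|u\|_{L^p}$, and the scaling $\|K_\lambda\|_{L^s}=\lambda^{d(1-1/s)}\|G\|_{L^s}=\lambda^{d(1/p-1/q)}\|G\|_{L^s}$ settles the $k=0$ case. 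For the derivative gain with $p=q$, I observe that $\partial_j u=\lambda\,\widetilde K_\lambda * u$ with $\widetilde K_\lambda(x)=\lambda^d\widetilde G(\lambda x)$ and $\widetilde G=\mc F^{-1}[i\xi_j\,\psi]$, so Young's inequality with $s=1$ yields $\|\partial_j u\|_{L^p}\le C_0\,\lambda\,\|u\|_{L^p}$. Crucially, $\partial_j u$ retains spectral support in $B(0,\lambda R)$, so this first-order estimate can be iterated $k$ times, producing $\|\nabla^k u\|_{L^p}\le C_0^{\,k}\lambda^k\|u\|_{L^p}$; this is precisely how the geometric constant $C^{k+1}$ arises, instead of an uncontrolled $k$-dependence. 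Applying the embedding to the band-limited function $\nabla^k u$ and then the derivative bound gives the full first inequality.

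\textbf{Second inequality (support in an annulus).} The upper bound is immediate from the $p=q$ derivative estimate above. For the lower bound the idea is to reconstruct $u$ from $\nabla^k u$. Fixing $\theta\in C^\infty_c(\R^d\setminus\{0\})$ with $\theta\equiv 1$ on $\{r\le|\xi|\le R\}$, the multinomial identity $\sum_{|\alpha|=k}\binom{k}{\alpha}\xi^{2\alpha}=|\xi|^{2k}$ lets me write, on $\Supp\what u$,
\begin{equation*}
\what u(\xi)=\sum_{|\alpha|=k}\binom{k}{\alpha}\,\frac{(-i\xi)^\alpha}{|\xi|^{2k}}\,\theta(\xi/\lambda)\,(i\xi)^\alpha\,\what u(\xi),
\end{equation*}
that is $u=\sum_{|\alpha|=k}\binom{k}{\alpha}\,g_{\alpha,\lambda}*\partial^\alpha u$, where $\what{g_{\alpha,\lambda}}(\xi)=\lambda^{-k}m_\alpha(\xi/\lambda)$ and $m_\alpha(\eta)=(-i\eta)^\alpha\,|\eta|^{-2k}\,\theta(\eta)$ is smooth and compactly supported precisely because $\theta$ vanishes near the origin. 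By scaling $\|g_{\alpha,\lambda}\|_{L^1}=\lambda^{-k}\|\mc F^{-1}m_\alpha\|_{L^1}$, so Young's inequality gives, after summing over $\alpha$, the reversed bound $\|u\|_{L^p}\le C\,\lambda^{-k}\|\nabla^k u\|_{L^p}$.

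The main obstacle is the book-keeping of the constants so that they appear in the stated forms $C^{k+1}$ and $C^{-k-1}$, uniformly in $k$, $p$, $q$ and $\lambda$. For the upper bounds this is handled by the iteration of the first-order estimate, whose constant does not degenerate under composition. For the annular lower bound I must check that $\sum_{|\alpha|=k}\binom{k}{\alpha}\|\mc F^{-1}m_\alpha\|_{L^1}$ grows at most geometrically; this follows from a uniform bound on the $L^1$ norms of the fixed profiles $\mc F^{-1}m_\alpha$ together with $\sum_{|\alpha|=k}\binom{k}{\alpha}=d^k$, whence one absorbs the growth into a single base constant $C=C(d,r,R,\theta,\psi)$. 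All the remaining manipulations are standard scaling identities and applications of Young's inequality.
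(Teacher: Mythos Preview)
The paper does not actually prove this lemma; it is stated in the appendix as a classical fact, with a blanket reference to Chapter~2 of \cite{BCD} for details. Your argument is essentially the standard textbook proof found there: rescaled cutoffs, Young's inequality for the $L^p\to L^q$ embedding, and iteration of a first-order estimate to obtain the geometric constant $C^{k+1}$.

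One small point deserves tightening. For the annular lower bound you invoke the multinomial identity and then claim ``a uniform bound on the $L^1$ norms of the fixed profiles $\mc F^{-1}m_\alpha$''. These profiles are not fixed: $m_\alpha(\eta)=(-i\eta)^\alpha|\eta|^{-2k}\theta(\eta)$ depends on $k$ through the factor $|\eta|^{-2k}$, so their $L^1$ norms do grow with $k$ (geometrically, which is ultimately fine, but this needs to be said). A cleaner route, and the one usually taken, is to mirror exactly what you did for the upper bound: write the \emph{first-order} reconstruction
\[
\what u(\xi)=\sum_{j=1}^d \frac{-i\xi_j}{|\xi|^2}\,\theta(\xi/\lambda)\,(i\xi_j)\,\what u(\xi),
\]
which gives $\|u\|_{L^p}\le C_0\,\lambda^{-1}\sum_j\|\partial_j u\|_{L^p}$ with a single $k$-independent constant $C_0$, and then iterate $k$ times using that each $\partial_j u$ retains the same annular spectral support. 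This yields $C_0^{\,k}$ directly and avoids any bookkeeping over multi-indices.
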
   

By use of Littlewood-Paley decomposition, we can define the class of Besov spaces.
\begin{defi} \label{d:B}
  Let $s\in\R$ and $1\leq p,r\leq+\infty$. The \emph{non-homogeneous Besov space}
$B^{s}_{p,r}$ is defined as the subset of tempered distributions $u$ for which
$$
\|u\|_{B^{s}_{p,r}}\,:=\,
\left\|\left(2^{js}\,\|\Delta_ju\|_{L^p}\right)_{j\geq -1}\right\|_{\ell^r}\,<\,+\infty\,.
$$
\end{defi}
Besov spaces are interpolation spaces between Sobolev spaces. In fact, for any $k\in\N$ and~$p\in[1,+\infty]$
we have the following chain of continuous embeddings: $ B^k_{p,1}\hookrightarrow W^{k,p}\hookrightarrow B^k_{p,\infty}$,
where  $W^{k,p}$ denotes the classical Sobolev space of $L^p$ functions with all the derivatives up to the order $k$ in $L^p$.
When $1<p<+\infty$, we can refine the previous result (this is the non-homogeneous version of Theorems 2.40 and 2.41 in \cite{BCD}): we have
$$
 B^k_{p, \min (p, 2)}\hookrightarrow W^{k,p}\hookrightarrow B^k_{p, \max(p, 2)}\,.
$$
In particular, for all $s\in\R$ we deduce the equivalence $B^s_{2,2}\equiv H^s$, with equivalence of norms:
\begin{equation} \label{eq:LP-Sob}
\|f\|_{H^s}\,\sim\,\left(\sum_{j\geq-1}2^{2 j s}\,\|\Delta_jf\|^2_{L^2}\right)^{1/2}\,.
\end{equation}

As an immediate consequence of the first Bernstein inequality, one gets the following embedding result.
\begin{prop}\label{p:embed}
The space $B^{s_1}_{p_1,r_1}$ is continuously embedded in the space $B^{s_2}_{p_2,r_2}$ for all indices satisfying $p_1\,\leq\,p_2$ and
$$
s_2\,<\,s_1-d\left(\frac{1}{p_1}-\frac{1}{p_2}\right)\qquad\mbox{ or }\qquad
s_2\,=\,s_1-d\left(\frac{1}{p_1}-\frac{1}{p_2}\right)\;\;\mbox{ and }\;\;r_1\,\leq\,r_2\,. 
$$
\end{prop}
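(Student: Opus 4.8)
The plan is to reduce the whole statement to a single blockwise estimate and then to sum up the resulting inequalities in the appropriate $\ell^r$ space. The only analytic ingredient is the first Bernstein inequality of Lemma~\ref{l:bern}, used with $k=0$: it asserts that, for a function whose Fourier transform is supported in a ball $B(0,\lambda R)$, one has $\|u\|_{L^{p_2}}\leq C\,\lambda^{d(1/p_1-1/p_2)}\,\|u\|_{L^{p_1}}$ as soon as $p_1\leq p_2$. Applying this to the dyadic blocks $\Delta_j u$ is legitimate: for $j\geq0$ the spectrum of $\Delta_j u$ sits in the ball $B(0,2^{j+1})$, so one may take $\lambda\sim2^{j}$, while for $j=-1$ the spectrum lies in $B(0,2)$ and the corresponding factor is a harmless constant. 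We therefore obtain, for every $j\geq-1$,
\begin{equation*}
\|\Delta_j u\|_{L^{p_2}}\,\leq\,C\,2^{jd(1/p_1-1/p_2)}\,\|\Delta_j u\|_{L^{p_1}}\,,
\end{equation*}
with $C$ independent of $j$ and $u$.

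Next I would introduce the gap exponent $\alpha:=s_1-s_2-d(1/p_1-1/p_2)$, which is nonnegative by hypothesis, and rewrite the generic summand of the target norm. Multiplying the previous inequality by $2^{js_2}$ gives
\begin{equation*}
2^{js_2}\,\|\Delta_j u\|_{L^{p_2}}\,\leq\,C\,2^{-j\alpha}\,\Big(2^{js_1}\,\|\Delta_j u\|_{L^{p_1}}\Big)\,.
\end{equation*}
Writing $a_j:=2^{js_1}\|\Delta_j u\|_{L^{p_1}}$, so that $\|u\|_{B^{s_1}_{p_1,r_1}}=\|(a_j)_{j\geq-1}\|_{\ell^{r_1}}$, the claim becomes the statement that the sequence $(2^{-j\alpha}a_j)_{j\geq-1}$ is controlled in $\ell^{r_2}$ by $(a_j)_{j\geq-1}$ in $\ell^{r_1}$.

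The conclusion then splits into the two cases of the hypothesis. When $\alpha>0$ (the strict inequality on the regularity indices), the geometric decay of $2^{-j\alpha}$ over $j\geq-1$ does all the work: since $\ell^{r_1}\hookrightarrow\ell^\infty$, one has $\sup_j a_j\leq\|(a_j)\|_{\ell^{r_1}}$, and hence $\|(2^{-j\alpha}a_j)\|_{\ell^{r_2}}\leq\|(2^{-j\alpha})\|_{\ell^{r_2}}\,\sup_j a_j\leq C\,\|(a_j)\|_{\ell^{r_1}}$, no relation between $r_1$ and $r_2$ being needed. When $\alpha=0$, the factor $2^{-j\alpha}$ disappears and one is reduced to $\|(a_j)\|_{\ell^{r_2}}\leq\|(a_j)\|_{\ell^{r_1}}$, which is precisely the elementary embedding $\ell^{r_1}\hookrightarrow\ell^{r_2}$ valid for $r_1\leq r_2$; this is exactly where the extra assumption $r_1\leq r_2$ of the borderline case enters. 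Since all estimates hold verbatim for the endpoint values $p_2=+\infty$ and $r_1,r_2\in\{+\infty\}$ (Bernstein's inequality and the $\ell^r$ inclusions being stated for the full range), this covers every admissible choice of indices.

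There is no serious obstacle here: the result is essentially a bookkeeping exercise on top of Bernstein's inequality. The only point deserving care is the dichotomy in the final summation, namely recognizing that the strict case is handled by summability of the geometric weight alone (so the indices $r_1,r_2$ play no role), whereas the borderline case genuinely requires the monotonicity $\ell^{r_1}\hookrightarrow\ell^{r_2}$ and thus the hypothesis $r_1\leq r_2$. A secondary, purely cosmetic, point is the separate treatment of the low-frequency block $j=-1$, whose spectrum is a ball rather than an annulus, which is nonetheless the exact setting of the first Bernstein inequality.
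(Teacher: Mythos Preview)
Your proof is correct and follows exactly the approach the paper indicates: the proposition is stated there as ``an immediate consequence of the first Bernstein inequality'' without further detail, and your argument is precisely the standard fleshing-out of that remark---Bernstein with $k=0$ on each block, followed by the $\ell^{r_1}\hookrightarrow\ell^{r_2}$ dichotomy according to whether the regularity gap is strict or borderline.
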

In particular, in dimension $d = 2$, we get the regular Sobolev embeddings
\begin{equation}\label{Emb}
H^s \simeq B^s_{2, 2} \hookrightarrow B_{p, 2}^{s - 2\left( \frac{1}{2} - \frac{1}{p} \right)} \hookrightarrow B_{p, 2}^0 = B_{p, \min(p, 2)}^0 \hookrightarrow L^p
\end{equation}
as long as $0 \leq s < 1$ and $2 \leq p \leq 2/(1-s)$. Also note that, still for $d = 2$, one has the embedding $H^s \hookrightarrow L^\infty \cap C^0$ for all $s > 1$.


Let us now introduce the paraproduct operator (after J.-M. Bony, see \cite{Bony}). Constructing the paraproduct operator relies on the observation that, 
formally, any product  of two tempered distributions $u$ and $v,$ may be decomposed into 
\begin{equation}\label{eq:bony}
u\,v\;=\;T_u(v)\,+\,T_v(u)\,+\,R(u,v)\,,
\end{equation}
where we have defined
$$
T_u(v)\,:=\,\sum_jS_{j-1}u\Delta_j v,\qquad\qquad\mbox{ and }\qquad\qquad
R(u,v)\,:=\,\sum_j\sum_{|j'-j|\leq1}\Delta_j u\,\Delta_{j'}v\,.
$$
The above operator $T$ is called ``paraproduct'' whereas
$R$ is called ``remainder''.
The paraproduct and remainder operators have many nice continuity properties. 
The following ones have been of constant use in this paper (see the proof in e.g. Chapter 2 of \cite{BCD}).
\begin{prop}\label{p:op}
For any $(s,p,r)\in\R\times[1,\infty]^2$ and $t>0$, the paraproduct operator 
$T$ maps continuously $L^\infty\times B^s_{p,r}$ in $B^s_{p,r}$ and  $B^{-t}_{\infty,\infty}\times B^s_{p,r}$ in $B^{s-t}_{p,r}$.
Moreover, the following estimates hold:
$$
\|T_u(v)\|_{B^s_{p,r}}\,\leq\, C\,\|u\|_{L^\infty}\,\|\nabla v\|_{B^{s-1}_{p,r}}\qquad\mbox{ and }\qquad
\|T_u(v)\|_{B^{s-t}_{p,r}}\,\leq\, C\|u\|_{B^{-t}_{\infty,\infty}}\,\|\nabla v\|_{B^{s-1}_{p,r}}\,.
$$
For any $(s_1,p_1,r_1)$ and $(s_2,p_2,r_2)$ in $\R\times[1,\infty]^2$ such that 
$s_1+s_2>0$, $1/p:=1/p_1+1/p_2\leq1$ and~$1/r:=1/r_1+1/r_2\leq1$,
the remainder operator $R$ maps continuously~$B^{s_1}_{p_1,r_1}\times B^{s_2}_{p_2,r_2}$ into~$B^{s_1+s_2}_{p,r}$.
In the case $s_1+s_2=0$, provided $r=1$, operator $R$ is continuous from $B^{s_1}_{p_1,r_1}\times B^{s_2}_{p_2,r_2}$ with values
in $B^{0}_{p,\infty}$.
\end{prop}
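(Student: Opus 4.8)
Both continuity claims rest on the same strategy, built on two elementary ``summation'' facts for dyadically localised series, themselves immediate consequences of the Bernstein inequalities of Lemma \ref{l:bern} (see Chapter 2 of \cite{BCD}). \emph{Fact A:} if $(u_j)_{j\geq-1}$ satisfies $\Supp\what{u_j}\subset 2^j\mathcal{C}$ for a fixed annulus $\mathcal{C}$ and $\big(2^{js}\|u_j\|_{L^p}\big)_j\in\ell^r$ for some \emph{real} $s$, then $\sum_ju_j$ converges in $\mc S'$ to an element of $B^s_{p,r}$ with norm $\leq C\big\|(2^{js}\|u_j\|_{L^p})_j\big\|_{\ell^r}$. \emph{Fact B:} the same holds if one only knows $\Supp\what{u_j}\subset 2^jB(0,R)$, but now under the condition $s>0$; in the borderline case $s=0$ one still obtains $\sum_ju_j\in B^0_{p,\infty}$ provided $r=1$. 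The short proof of both, which I would record first: since $\Delta_k u_j$ vanishes unless $|k-j|$ (annulus), resp. $k-j$ (ball), is bounded by some absolute $N_0$, the quantity $2^{ks}\|\Delta_k(\sum_ju_j)\|_{L^p}$ is dominated by a discrete convolution of $\big(2^{js}\|u_j\|_{L^p}\big)_j$ with the sequence $(2^{ms}\mds{1}_{|m|\leq N_0})_m$, resp. $(2^{ms}\mds{1}_{m\leq N_0})_m$, and Young's inequality for series closes it — the one-sided truncation being summable exactly when $s>0$ (and merely bounded when $s=0$, which is the origin of the $B^0_{p,\infty}$ endpoint).

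For the paraproduct, the point I would exploit is that $\what{S_{j-1}u}$ lives in a ball of radius $\sim2^j$ while $\what{\Delta_jv}$ lives in an annulus of size $\sim2^j$, so that $S_{j-1}u\,\Delta_jv$ has its spectrum in a \emph{fixed} dyadic annulus $2^j\mathcal{C}'$; this is exactly the setting of Fact A, which imposes no restriction on $s$. It then remains to bound $\|S_{j-1}u\,\Delta_jv\|_{L^p}\leq\|S_{j-1}u\|_{L^\infty}\|\Delta_jv\|_{L^p}$: in the first case $\|S_{j-1}u\|_{L^\infty}\leq C\|u\|_{L^\infty}$ by the uniform $L^\infty$-continuity of $S_{j-1}$, while in the second $\|S_{j-1}u\|_{L^\infty}\leq\sum_{k\leq j-2}\|\Delta_ku\|_{L^\infty}\leq\|u\|_{B^{-t}_{\infty,\infty}}\sum_{k\leq j-2}2^{kt}\leq C\,2^{jt}\|u\|_{B^{-t}_{\infty,\infty}}$, where the hypothesis $t>0$ makes the geometric series converge. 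Multiplying by $2^{js}$ (resp. $2^{j(s-t)}$), taking $\ell^r$ norms and applying Fact A yields the two continuity statements; the refinements involving $\|\nabla v\|_{B^{s-1}_{p,r}}$ follow by replacing $2^{js}\|\Delta_jv\|_{L^p}$ with $C\,2^{j(s-1)}\|\Delta_j\nabla v\|_{L^p}$, using the lower Bernstein inequality on the annulus where $\Delta_jv$ is supported.

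For the remainder I would write $R(u,v)=\sum_j\Delta_ju\,\wtilde\Delta_jv$ with $\wtilde\Delta_j=\Delta_{j-1}+\Delta_j+\Delta_{j+1}$, and observe that here \emph{both} factors sit in annuli of comparable size $\sim2^j$, so their product is only spectrally localised in a \emph{ball} $2^jB(0,R)$ (two same-scale annuli can add up to a neighbourhood of the origin). This is precisely why Fact B — and hence the assumption $s_1+s_2>0$ — is the relevant tool. H\"older's inequality with $1/p=1/p_1+1/p_2$ gives $\|\Delta_ju\,\wtilde\Delta_jv\|_{L^p}\leq\|\Delta_ju\|_{L^{p_1}}\|\wtilde\Delta_jv\|_{L^{p_2}}\leq c_j\,2^{-j(s_1+s_2)}\|u\|_{B^{s_1}_{p_1,r_1}}\|v\|_{B^{s_2}_{p_2,r_2}}$, where $c_j$ is the product of an $\ell^{r_1}$- and an $\ell^{r_2}$-normalised sequence (a bounded index shift changes a Besov norm only by a constant), hence $(c_j)_j\in\ell^r$ with $1/r=1/r_1+1/r_2$ by H\"older for series; Fact B applied with $s=s_1+s_2>0$ then gives boundedness into $B^{s_1+s_2}_{p,r}$, and its endpoint version handles the case $s_1+s_2=0$, $r=1$ with target $B^0_{p,\infty}$. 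The one genuinely delicate point in the whole argument is exactly this spectral-support bookkeeping — recognising that the paraproduct builds dyadic annuli (so every real $s$ is allowed) whereas the remainder builds dyadic balls (so the regularity must be paid for, through $s_1+s_2>0$, or the endpoint loss to $B^0_{p,\infty}$) — together with the justification that the defining series converge in $\mc S'$, which is already part of Facts A and B.
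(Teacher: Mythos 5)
Your proof is correct and is precisely the standard Littlewood--Paley argument (spectral localisation in annuli for $T$ versus balls for $R$, plus the two summation lemmas) that the paper itself does not reproduce but delegates to Chapter~2 of \cite{BCD}. The one point worth keeping explicit, which you do handle, is that the paraproduct only involves $\Delta_jv$ for $j\geq1$, so the reverse Bernstein inequality legitimately converts $2^{js}\|\Delta_jv\|_{L^p}$ into $2^{j(s-1)}\|\Delta_j\nabla v\|_{L^p}$ and yields the stated bounds in terms of $\|\nabla v\|_{B^{s-1}_{p,r}}$.
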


As a corollary of the previous proposition, we deduce the following continuity properties of the product in Sobolev spaces, which have been used in the course of the analysis.
In the statement, we limit ourselves to the case of space dimension $d=2$,   the only relevant one for this study.
\begin{lemma}\label{Para}
Take the space dimension to be $d = 2$. For appropriate $f$ and $g$, one has the following properties:
\begin{enumerate}[(1)]
\item for $s \in \mathbb{R}$ and $t > 0$, $\| T_f(g) \|_{H^{s - t}} \leq C \| f \|_{H^{1 - t}} \| g \|_{H^s}$.
\item for $s \in \mathbb{R}$, $\| T_f(g) \|_{H^{s}} \leq C \| f \|_{L^\infty} \| g \|_{H^s}$;
\item for $s_1, s_2  \in \mathbb{R}$ such that $s_1 + s_2 > 0$, $\| R(f, g) \|_{H^{s_1 + s_2 - 1}} \leq C \| f \|_{H^{s_1}} \| g \|_{H^{s_2}}$.
\end{enumerate}
\end{lemma}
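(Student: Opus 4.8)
The plan is to derive all three inequalities directly from Proposition \ref{p:op}, by specialising its indices to $p = r = 2$ (so that $B^s_{2,2} = H^s$, with equivalence of norms \eqref{eq:LP-Sob}) and combining with the embeddings of Proposition \ref{p:embed}, which in space dimension $d = 2$ cost exactly one derivative when one lowers the integrability exponent from $2$ to $\infty$ or raises it from $1$ to $2$. No genuinely new computation is needed: the statement is a bookkeeping corollary of Bony's continuity results.

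First I would treat items (1) and (2). Item (2) is immediate: the first assertion of Proposition \ref{p:op}, applied with $(s,p,r) = (s,2,2)$, states precisely that $T$ maps $L^\infty \times H^s$ continuously into $H^s$, which is the desired bound $\|T_f(g)\|_{H^s} \leq C\|f\|_{L^\infty}\|g\|_{H^s}$. For item (1), the key observation is the embedding $H^{1-t} = B^{1-t}_{2,2} \hookrightarrow B^{-t}_{\infty,\infty}$, which follows from Proposition \ref{p:embed} in the borderline case ($p_1 = 2 \leq p_2 = \infty$, $d = 2$, so the regularity drops by $d(1/p_1 - 1/p_2) = 1$, and the summability condition $r_1 = 2 \leq r_2 = \infty$ holds). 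Hence $\|f\|_{B^{-t}_{\infty,\infty}} \leq C\|f\|_{H^{1-t}}$, and feeding this into the second assertion of Proposition \ref{p:op} (legitimate since $t > 0$), again with $(s,p,r) = (s,2,2)$, gives $\|T_f(g)\|_{H^{s-t}} = \|T_f(g)\|_{B^{s-t}_{2,2}} \leq C\|f\|_{B^{-t}_{\infty,\infty}}\|g\|_{B^s_{2,2}} \leq C\|f\|_{H^{1-t}}\|g\|_{H^s}$, as claimed.

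For item (3), I would choose $p_1 = p_2 = 2$ and $r_1 = r_2 = 2$ in the remainder estimate of Proposition \ref{p:op}: since $s_1 + s_2 > 0$, $1/p = 1/p_1 + 1/p_2 = 1$ and $1/r = 1/r_1 + 1/r_2 = 1$, we obtain the continuity $R : H^{s_1} \times H^{s_2} \to B^{s_1+s_2}_{1,1}$. It then remains to embed $B^{s_1+s_2}_{1,1}$ into $H^{s_1+s_2-1} = B^{s_1+s_2-1}_{2,2}$; this is again the borderline case of Proposition \ref{p:embed} with $d = 2$, $p_1 = 1$, $p_2 = 2$ (loss of exactly one derivative) and $r_1 = 1 \leq 2 = r_2$, hence valid. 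Composing the two continuous maps yields $\|R(f,g)\|_{H^{s_1+s_2-1}} \leq C\|f\|_{H^{s_1}}\|g\|_{H^{s_2}}$, completing the proof. The only point in the argument that requires a little care — and is the closest thing to an obstacle here — is the systematic use of the \emph{borderline} cases of Proposition \ref{p:embed}, where the regularity index is attained with equality and the inequality between the summability exponents is what makes the embedding hold; apart from that, the proof is routine.
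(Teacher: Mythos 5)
Your proof is correct and follows essentially the same route as the paper's: items (1) and (2) come directly from the two paraproduct estimates of Proposition \ref{p:op} combined with the borderline embedding $H^{1-t}=B^{1-t}_{2,2}\hookrightarrow B^{-t}_{\infty,\infty}$, and item (3) from the remainder estimate $R:H^{s_1}\times H^{s_2}\to B^{s_1+s_2}_{1,1}$ followed by $B^{s_1+s_2}_{1,1}\hookrightarrow H^{s_1+s_2-1}$. All index checks (including the equality cases of Proposition \ref{p:embed} with the required monotonicity of the summability exponents) are carried out correctly.
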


\begin{proof}
We start by proving the first point. From the second inequality in Proposition \ref{p:op}, we get
\begin{equation*}
\| T_f(g) \|_{H^{s - t}} = \| T_f(g) \|_{B^{s - t}_{2, 2}} \leq C \| f \|_{B^{-t}_{\infty, \infty}} \| \nabla g \|_{B^{s-1}_{2, 2}} = C \| f \|_{B^{-t}_{\infty, \infty}} \| \nabla g \|_{H^{s-1}}\,.
\end{equation*}
Because $d = 2$, Proposition \ref{p:embed} gives the embedding $H^{1 - t} = B^{1 - t}_{2, 2} \hookrightarrow B^{-t}_{\infty, \infty}$, from which we infer
the first inequality.

Next, using the first inequality in Proposition \ref{p:op}, we gather
\begin{equation*}
\| T_f(g) \|_{H^{s}}  = \| T_f(g) \|_{B^{s - t}_{2, 2}}  \leq C \| f \|_{L^\infty} \| \nabla g \|_{B^{s-1}2, 2} \leq C \| f \|_{L^\infty} \| g \|_{H^s}\,,
\end{equation*}
which proves the second point.

Finally, using Proposition \ref{p:op} to estimate the remainder term, because we have assumed that $s_1 + s_2 > 0$, we get
\begin{equation*}
\| R(f, g) \|_{B^{s_1 + s_2}_{1, 1}} \leq C \| f \|_{H^{s_1}} \| g \|_{H^{s_2}}\,.
\end{equation*}
Proposition \ref{p:embed} provides the embedding $B^{s_1 + s_2}_{1, 1} \hookrightarrow B^{s_1 + s_2 - 1}_{2, 2} = H^{s_1 + s_2 - 1}$, which gives the last inequality, thus completing the proof of the lemma.
\end{proof}

\begin{cor}\label{c:product}
As a consequence of the previous lemma, wee see that
\begin{enumerate}[(i)]
\item for any $\delta > 0$, the space $H^{1 + \delta}$ is a Banach algebra;
\item for all $s > -1$ and all $(f, g) \in H^1 \times H^s$, we have $fg \in H^{s - \delta}$ for any $\de>0$, with
\begin{equation*}
\| fg \|_{H^{s-\delta}}\, \leq\, C\, \| f \|_{H^1}\,  \| g \|_{H^s}\,.
\end{equation*}
\end{enumerate}
\end{cor}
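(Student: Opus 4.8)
The plan is to expand every product by means of Bony's decomposition \eqref{eq:bony}, $fg = T_f(g) + T_g(f) + R(f,g)$, and to estimate the three contributions separately, using only the continuity properties of paraproduct and remainder collected in Lemma \ref{Para}, together with the Sobolev embeddings of Proposition \ref{p:embed} in space dimension $d = 2$.

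For item (i), fix $f, g \in H^{1+\delta}$. Since $1 + \delta > 1$, the embedding $H^{1+\delta} \hookrightarrow L^\infty$ recalled after \eqref{Emb} gives $\|f\|_{L^\infty} + \|g\|_{L^\infty} \leq C\big(\|f\|_{H^{1+\delta}} + \|g\|_{H^{1+\delta}}\big)$. Point (2) of Lemma \ref{Para} then yields $\|T_f(g)\|_{H^{1+\delta}} \leq C \|f\|_{L^\infty} \|g\|_{H^{1+\delta}}$ and, by symmetry, $\|T_g(f)\|_{H^{1+\delta}} \leq C\|g\|_{L^\infty}\|f\|_{H^{1+\delta}}$. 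For the remainder, point (3) of Lemma \ref{Para} with $s_1 = s_2 = 1 + \delta$ (so that $s_1 + s_2 > 0$) gives $\|R(f,g)\|_{H^{1+2\delta}} \leq C\|f\|_{H^{1+\delta}}\|g\|_{H^{1+\delta}}$, and one concludes with $H^{1+2\delta} \hookrightarrow H^{1+\delta}$. Summing the three bounds shows that $H^{1+\delta}$ is a Banach algebra.

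For item (ii), fix $-1 < s \leq 1$ (the only range needed in the applications of the corollary throughout the paper) and $(f,g) \in H^1 \times H^s$, and let $\delta > 0$. The paraproduct $T_f(g)$ is handled by point (1) of Lemma \ref{Para} with $t = \delta$, after using $H^1 \hookrightarrow H^{1-\delta}$: this gives $\|T_f(g)\|_{H^{s-\delta}} \leq C\|f\|_{H^{1-\delta}}\|g\|_{H^s} \leq C\|f\|_{H^1}\|g\|_{H^s}$. For $T_g(f)$ one applies point (1) of Lemma \ref{Para} with the roles of the two arguments exchanged, namely $\|T_g(f)\|_{H^{1-t}} \leq C\|g\|_{H^{1-t}}\|f\|_{H^1}$, and chooses $t = 1 - s + \delta > 0$, so that $1 - t = s - \delta$ and $g \in H^s \hookrightarrow H^{s-\delta} = H^{1-t}$; hence $\|T_g(f)\|_{H^{s-\delta}} \leq C\|g\|_{H^s}\|f\|_{H^1}$. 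Finally, for the remainder one uses point (3) of Lemma \ref{Para} with $s_1 = 1$ and $s_2 = s$: since $s > -1$ one has $s_1 + s_2 > 0$, so $\|R(f,g)\|_{H^s} \leq C\|f\|_{H^1}\|g\|_{H^s}$, and $H^s \hookrightarrow H^{s-\delta}$. Adding the three contributions gives $\|fg\|_{H^{s-\delta}} \leq C\|f\|_{H^1}\|g\|_{H^s}$, as claimed.

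There is no genuine obstacle here; the argument is a matter of bookkeeping the Sobolev indices on top of Lemma \ref{Para} and Proposition \ref{p:embed}. The only point that needs a little care is the choice of the auxiliary parameter $t$ in the estimate for $T_g(f)$: one must take $t = 1 - s + \delta$ (rather than $t = \delta$) so that the low-frequency factor $g$ lands in $H^{1-t}$, and the loss of $\delta$ derivatives is genuinely unavoidable precisely because $f \in H^1$ fails to be in $L^\infty$ in dimension two \textemdash{} wherever one of the factors already sits in $L^\infty$, as in item (i), point (2) of Lemma \ref{Para} avoids the loss entirely.
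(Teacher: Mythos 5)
Your argument is correct and is exactly the derivation the paper intends (the corollary is stated without proof as an immediate consequence of Lemma \ref{Para}): Bony's decomposition, point (2) of the lemma plus the embedding $H^{1+\delta}\hookrightarrow L^\infty$ for item (i), and points (1) and (3) with the index bookkeeping you describe for item (ii). Your restriction to $-1<s\leq 1$ in item (ii) is not a defect: that is the only range in which the corollary is invoked in the paper (namely $H^{-k}\times H^1$ and $H^1\times H^1$), and indeed the literal statement cannot hold for $s-\delta>1$ since $T_g(f)$ caps the regularity of the product at that of $f\in H^1$, as your choice $t=1-s+\delta$ makes transparent.
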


The next two propositions are functional inequalities which we use repeatedly in this article. The first one is the classical Gagliardo-Nirenberg inequality, whose proof can be found e.g. in
Corollary 1.2 of \cite{CDGG}.

\begin{lemma}\label{GN}
Let $2 \leq p < +\infty$ such that $1/p > 1/2 - 1/d$. Then, for all $u \in H^1$, one has
\begin{equation*}
\| u \|_{L^p}\, \leq\, C(p)\, \| u \|_{L^2}^{\lambda}\, \| \nabla u \|_{L^2}^{1 - \lambda}\,, \qquad\qquad \text{ with } \qquad \lambda = \frac{d(p-2)}{2p}\,.
\end{equation*}
In particular, in dimension $d = 2$, we have $\| u \|_{L^p}\, \leq\, \| u \|_{L^2}^{2/p}\, \| \nabla u \|_{L^2}^{1 - 2/p}$ for any $p\in[2,+\infty[\,$.
\end{lemma}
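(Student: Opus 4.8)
The plan is to deduce the inequality directly from the Littlewood--Paley apparatus recalled in the appendix, specifically from the two Bernstein inequalities of Lemma \ref{l:bern}; this route also makes transparent why the restriction $1/p>1/2-1/d$ is the natural one. We may assume $p>2$ (the case $p=2$ being trivial, with $\lambda=1$) and, working on $\R^d$, we may suppose $u\not\equiv0$, so that $\|u\|_{L^2}>0$ and $\|\nabla u\|_{L^2}>0$ (if $\nabla u\equiv0$ and $u\in H^1(\R^d)$ then $u\equiv0$).

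First I would set $\alpha:=d\big(\tfrac12-\tfrac1p\big)$, and observe that the hypothesis $1/p>1/2-1/d$ is precisely equivalent to $\alpha<1$, so that $0<\alpha<1$. Decomposing $u=\sum_{j\geq-1}\Delta_j u$ and combining the triangle inequality with the first Bernstein inequality applied to each block (with the Lebesgue couple $(2,p)$) gives
\[
\|u\|_{L^p}\,\leq\,\sum_{j\geq-1}\|\Delta_j u\|_{L^p}\,\leq\,C\sum_{j\geq-1}2^{j\alpha}\,\|\Delta_j u\|_{L^2}\,.
\]
For the $L^2$-size of the blocks I would keep two bounds in reserve: the uniform bound $\|\Delta_j u\|_{L^2}\leq C\|u\|_{L^2}$, valid for every $j$; and, for $j\geq0$, the bound $\|\Delta_j u\|_{L^2}\leq C2^{-j}\|\nabla\Delta_j u\|_{L^2}\leq C2^{-j}\|\nabla u\|_{L^2}$, which follows from the second Bernstein inequality (since $\Delta_j u$ is spectrally supported in an annulus of size $2^j$) together with the fact that $\Delta_j$ commutes with $\nabla$ and is bounded on $L^2$ uniformly in $j$.

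Next I would fix an integer $N\geq0$, split the series at $j=N$, and apply the first bound for $j\leq N$ and the second for $j>N$: the low frequencies contribute at most $C2^{N\alpha}\|u\|_{L^2}$ (a convergent geometric sum, since $\alpha>0$), while the high frequencies contribute at most $C\sum_{j>N}2^{j(\alpha-1)}\|\nabla u\|_{L^2}\leq C2^{N(\alpha-1)}\|\nabla u\|_{L^2}$, where the convergence of this second geometric series is exactly where $\alpha<1$ is used. Hence $\|u\|_{L^p}\leq C\big(2^{N\alpha}\|u\|_{L^2}+2^{N(\alpha-1)}\|\nabla u\|_{L^2}\big)$; choosing $N$ to be the nearest integer to $\log_2\!\big(\|\nabla u\|_{L^2}/\|u\|_{L^2}\big)$ balances the two terms, up to a harmless multiplicative constant, and yields $\|u\|_{L^p}\leq C\,\|u\|_{L^2}^{1-\alpha}\,\|\nabla u\|_{L^2}^{\alpha}$. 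Since the exponent of $\|u\|_{L^2}$ is $1-\alpha=1-d(p-2)/(2p)$, which equals $2/p$ when $d=2$, this is exactly the asserted estimate.

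There is no genuine difficulty here --- it is a classical inequality --- so the only points deserving attention are the rigorous handling of the optimization over $N$ (passing from the real optimizer to an admissible integer) and of the degenerate cases noted above. As an alternative I would mention that one can instead combine the homogeneous Sobolev embedding $\dot H^{\alpha}\hookrightarrow L^p$, valid precisely when $\alpha=d(1/2-1/p)$ with $0<\alpha<d/2$ (compare Proposition \ref{p:embed} and \eqref{Emb} for the non-homogeneous counterpart), with the interpolation inequality $\|u\|_{\dot H^{\alpha}}\leq\|u\|_{L^2}^{1-\alpha}\|\nabla u\|_{L^2}^{\alpha}$, the latter being immediate from Plancherel's identity and H\"older's inequality on the Fourier side.
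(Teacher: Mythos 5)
Your method --- non-homogeneous Littlewood--Paley decomposition, direct and reverse Bernstein inequalities, a high/low frequency split at level $N$, and optimization over $N$ --- is exactly the technique the paper itself uses to prove the companion endpoint estimate, Proposition \ref{PropOP}; for Lemma \ref{GN} the paper gives no proof at all and simply cites Corollary 1.2 of \cite{CDGG}. Your exponent bookkeeping is the correct one: scale invariance forces the exponent $\alpha=d(1/2-1/p)=d(p-2)/(2p)$ to sit on $\|\nabla u\|_{L^2}$ and $1-\alpha$ on $\|u\|_{L^2}$, which is what you derive and what the ``in particular'' for $d=2$ asserts. Note that this means the general formula as printed in the lemma (with $\lambda=d(p-2)/(2p)$ as the exponent of $\|u\|_{L^2}$) has the two exponents swapped; your proof silently corrects this typo, but it is worth flagging.

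There is, however, one genuine gap in the optimization step. With the \emph{non-homogeneous} blocks of the appendix, the reverse Bernstein inequality is only available for $j\geq0$: the block $\Delta_{-1}u$ is spectrally supported in a ball, and $\|\Delta_{-1}u\|_{L^2}$ cannot be controlled by $\|\nabla u\|_{L^2}$ (take $\widehat u$ concentrated near $\xi=0$, so that $\|\nabla u\|_{L^2}/\|u\|_{L^2}$ is arbitrarily small while $\|\Delta_{-1}u\|_{L^2}=\|u\|_{L^2}$). Consequently your two-sided bound $\|u\|_{L^p}\leq C\bigl(2^{N\alpha}\|u\|_{L^2}+2^{N(\alpha-1)}\|\nabla u\|_{L^2}\bigr)$ is only justified for integers $N\geq-1$, and when $\|\nabla u\|_{L^2}\ll\|u\|_{L^2}$ the real optimizer $\log_2\bigl(\|\nabla u\|_{L^2}/\|u\|_{L^2}\bigr)$ is very negative; choosing the nearest admissible $N$ then only yields $\|u\|_{L^p}\lesssim\|u\|_{L^2}$, which does not imply the homogeneous estimate in that regime. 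Two standard repairs: either first rescale $u_\mu(x)=u(\mu x)$ with $\mu=\|u\|_{L^2}/\|\nabla u\|_{L^2}$ so that the two norms coincide, run your argument with $N=0$, and invoke the scale invariance of the inequality; or work with the homogeneous blocks $\dot\Delta_j$, $j\in\Z$, all supported in annuli, so that the split is legitimate for every $N\in\Z$. Your closing alternative --- the embedding $\dot H^{\alpha}\hookrightarrow L^p$ combined with the interpolation $\|u\|_{\dot H^{\alpha}}\leq\|u\|_{L^2}^{1-\alpha}\|\nabla u\|_{L^2}^{\alpha}$ from Plancherel and H\"older on the Fourier side --- avoids the issue entirely and is the cleanest route.
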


The following proposition, which is in the same spirit of the Gagliardo-Nirenberg inequality, gives a bound for the endpoint case $p=+\infty$. It is proved by resorting to
Littlewood-Paley decomposition and the Bernstein inequalities.

\begin{prop}\label{PropOP}
Let $f \in H^{s}$, for some $s>d/2$. Then there exists a constant $C = C(d,s) > 0$ and an exponent $\alpha = \alpha(d,s) = d/(2s)$ such that
\begin{equation*}
\| f \|_{L^\infty}\, \leq\, C\, \| f \|_{L^2}^{1 - \alpha}\, \left\| (-\Delta)^{s/2} f \right\|_{L^2}^\alpha\,.
\end{equation*}
In particular, when $d=2$ and $s=2$, then $\alpha = 1/2$ and $\| f \|_{L^\infty}\,\leq\, C\, \| f \|_{L^2}^{1/2}\, \| \Delta f \|_{L^2}^{1/2}$.
\end{prop}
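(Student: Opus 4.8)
The plan is to deduce this inequality from a dyadic frequency splitting, exactly in the spirit of the Bernstein inequalities of Lemma \ref{l:bern}; what is asserted is precisely the interpolation inequality underlying the Sobolev embedding $H^s \hookrightarrow L^\infty$ for $s > d/2$. First I would write $f = \sum_j \Delta_j f$ and bound, by the triangle inequality and the first Bernstein inequality applied with $(p,q,k) = (2,+\infty,0)$ (recalling that $\widehat{\Delta_j f}$ is supported in a ball of radius $\sim 2^j$),
\[
\|f\|_{L^\infty}\;\leq\;\sum_j\|\Delta_j f\|_{L^\infty}\;\leq\;C\sum_j 2^{jd/2}\,\|\Delta_j f\|_{L^2}\,.
\]
It then remains to control this $\ell^1$-type sum, which I would do by splitting it at a dyadic threshold $2^N$ to be optimized at the end. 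The genuine subtlety lies only at the very low frequencies, and it is resolved most cleanly by using either the \emph{homogeneous} Littlewood--Paley decomposition $\dot\Delta_j$, $j\in\mathbb{Z}$ (for which the first Bernstein inequality still applies, being scale-invariant), or — on $\Omega = \mathbb{R}^d$ — the dilations $f \mapsto f(\lambda\,\cdot)$, under which the claimed inequality is invariant because $\alpha s = d/2$; after reducing in this way to the normalization $\|(-\Delta)^{s/2}f\|_{L^2} = \|f\|_{L^2}$, only the embedding $H^s \hookrightarrow L^\infty$ noted right after Proposition \ref{p:embed} is left to invoke.

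With the split in place the two halves are routine. For the low modes $\{j < N\}$, Cauchy--Schwarz in $j$ together with the orthogonality relation $\sum_j \|\Delta_j f\|_{L^2}^2 \leq C\|f\|_{L^2}^2$ (which is \eqref{eq:LP-Sob} at $s = 0$) gives $\sum_{j<N} 2^{jd/2}\|\Delta_j f\|_{L^2} \leq C\,2^{Nd/2}\|f\|_{L^2}$. For the high modes $\{j \geq N\}$, I would factor $2^{jd/2} = 2^{j(d/2 - s)}\,2^{js}$, use that $d/2 - s < 0$ to sum the geometric series, and apply Cauchy--Schwarz with the identity $\sum_j 2^{2js}\|\Delta_j f\|_{L^2}^2 \leq C\|(-\Delta)^{s/2}f\|_{L^2}^2$ (valid because $|\xi|^s \sim 2^{js}$ on the spectrum of $\Delta_j f$), getting $\sum_{j\geq N}2^{jd/2}\|\Delta_j f\|_{L^2} \leq C\,2^{N(d/2-s)}\|(-\Delta)^{s/2}f\|_{L^2}$. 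Adding the two estimates yields, for every threshold,
\[
\|f\|_{L^\infty}\;\leq\;C\Big(2^{Nd/2}\|f\|_{L^2}\;+\;2^{N(d/2-s)}\big\|(-\Delta)^{s/2}f\big\|_{L^2}\Big)\,,
\]
and I would finally choose $N$ (rounded to an integer, which only changes the constant) so that $2^{Ns} \sim \|(-\Delta)^{s/2}f\|_{L^2}/\|f\|_{L^2}$, balancing the two summands and producing $\|f\|_{L^\infty} \leq C\,\|f\|_{L^2}^{1-\alpha}\,\|(-\Delta)^{s/2}f\|_{L^2}^{\alpha}$ with $\alpha = d/(2s)$. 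The stated special case is then simply $d = 2$, $s = 2$, for which $\alpha = 1/2$ and $(-\Delta)^{s/2} = -\Delta$.

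I do not expect a real obstacle here; the only point requiring care in the write-up is the bookkeeping around the lowest frequencies — ensuring the threshold $N$ ranges over the admissible integers and that $\Delta_{-1}f$ (or its homogeneous counterpart) is handled correctly — which is exactly why I would prefer the homogeneous formulation, or the scaling reduction on $\mathbb{R}^d$. On $\mathbb{T}^d$, where this dilation invariance is unavailable, the same dyadic split works verbatim once one argues modulo the mean of $f$, so that $\|(-\Delta)^{s/2}f\|_{L^2}$ controls $\|f\|_{L^2}$ by Poincar\'e--Wirtinger, in keeping with the way Poincar\'e-type inequalities are already used elsewhere in the paper.
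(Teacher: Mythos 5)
Your proposal is correct and follows essentially the same route as the paper's proof: split $f=\sum_j\Delta_jf$ at a dyadic threshold $2^N$, apply the Bernstein inequalities to bound the low frequencies by $2^{Nd/2}\|f\|_{L^2}$ and the high frequencies by $2^{-N(s-d/2)}\|(-\Delta)^{s/2}f\|_{L^2}$, then optimize over $N$. The only differences are cosmetic (Cauchy--Schwarz in $j$ versus summing the geometric series directly, and the optional detour through the homogeneous decomposition), so no further comparison is needed.
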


\begin{proof}
The main idea of the proof is to look separately at the high and low frequencies. Let $N \geq 1$ be an integer to be fixed later on. Thanks to the Littlewood-Paley decomposition, we can write
\begin{equation*}
\| f \|_{L^\infty} \leq \sum_{j < N} \| \Delta_j f \|_{L^\infty} + \sum_{j\geq N} \| \Delta_j f \|_{L^\infty}\,.
\end{equation*}
Using the first Bernstein inequality in the fist sum gives $\| \Delta_j f \|_{L^\infty} \leq C 2^{jd/2} \| \Delta_j f \|_{L^2}$.
On the other hand, the two Bernstein inequalities applied to the high frequency term yield $\| \Delta_jf \|_{L^2} \leq C 2^{jd/2} 2^{-sj} \| \Delta_j (-\Delta)^{s/2} f \|_{L^2}$. Therefore
\begin{align*}
\| f \|_{L^\infty}\,&\leq\, C\, \|f \|_{L^2}\,\sum_{j < N} 2^{jd/2}\,+\,C\,\left\|(-\Delta)^{s/2} f \right\|_{L^2}\sum_{j \geq N} 2^{-j(s-d/2)} \\
&\leq\,C\,\left(\|f \|_{L^2}\,2^{Nd/2}\, +\,\left\|(-\Delta)^{s/2} f \right\|_{L^2} 2^{-N(s-d/2)}\right)\,.
\end{align*}
By choosing $N$ so that $2^{Ns} \approx\left\| (-\Delta)^{s/2} f \right\|_{L^2}\,\| f \|^{-1}_{L^2}$ (say that $N$ is the largest integer such that $2^{Ns}$ is smaller than
$\left\| (-\Delta)^{s/2} f \right\|_{L^2}\,\| f \|^{-1}_{L^2}$), we deduce the desired inequality.
\end{proof}

Finally, we recall a classical commutator estimate, which we have needed in our analysis (see Lemma 2.97 of \cite{BCD} for the proof).
\begin{lemma}\label{lemma:Comm}
Let $\chi \in C^1(\mathbb{R}^d)$ be such that $H(\xi) := (1 + |\xi|) \what{\chi}(\xi) \in L^1$. There exists a constant $C > 0$ depending only on $\| H \|_{L^1}$ such that
\begin{equation*}
\forall\, f \in W^{1, \infty}\,,\; \forall\, g \in L^2\,,\; \forall\, \lambda > 0\,, \qquad \left\| \left[   \chi \left(  \frac{1}{\lambda} D \right), f   \right] g \right\|_{L^2}\, \leq \,
C\, \frac{1}{\lambda}\, \| \nabla f \|_{L^\infty} \,\| g \|_{L^2}\,.
\end{equation*}
\end{lemma}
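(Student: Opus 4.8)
The statement is the classical commutator lemma of Bahouri--Chemin--Danchin, and the plan is to reduce it to an explicit kernel representation followed by Young's convolution inequality. First I would realise the Fourier multiplier $\chi(\lambda^{-1}D)$ as a convolution operator. Writing $k := \mathcal{F}^{-1}[\chi]$ for the kernel associated with the symbol $\chi$, the scaling properties of the Fourier transform show that $\chi(\lambda^{-1}D)$ acts as convolution against $k_\lambda(x) := \lambda^d\,k(\lambda x)$, whose $L^1$ norm is $\|k_\lambda\|_{L^1} = \|k\|_{L^1}$, independent of $\lambda$. The hypothesis $H(\xi) = (1+|\xi|)\what{\chi}(\xi) \in L^1$ guarantees, via Fourier inversion (which gives $k(x) = c\,\what{\chi}(-x)$ up to a dimensional constant), that both $k$ and its first moment $|\cdot|\,k$ belong to $L^1(\R^d)$, with norms controlled by $\|H\|_{L^1}$; this is exactly the integrability that the argument will consume.

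Next I would write the commutator in integral form. For $f \in W^{1,\infty}$ and $g \in L^2$, expanding $\chi(\lambda^{-1}D)(fg) - f\,\chi(\lambda^{-1}D)g$ and pulling the factor $f(x)$ inside the second integral yields
\[
\Big(\big[\chi(\lambda^{-1}D),f\big]g\Big)(x) = \int_{\R^d} k_\lambda(x-y)\,\big(f(y)-f(x)\big)\,g(y)\,\mathrm{d}y\,.
\]
The point of this representation is that the would-be singular cancellation inside the commutator has been turned into a plain pointwise difference of the Lipschitz function $f$, which is harmless.

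The key estimate then follows from the mean value inequality $|f(y)-f(x)| \le \|\nabla f\|_{L^\infty}\,|x-y|$, giving the pointwise bound
\[
\big|\big[\chi(\lambda^{-1}D),f\big]g\big|(x) \le \|\nabla f\|_{L^\infty}\,\big(K_\lambda * |g|\big)(x)\,, \qquad K_\lambda(z) := |z|\,|k_\lambda(z)|\,.
\]
Young's inequality $L^1 * L^2 \hookrightarrow L^2$ then gives $\big\|[\chi(\lambda^{-1}D),f]g\big\|_{L^2} \le \|\nabla f\|_{L^\infty}\,\|K_\lambda\|_{L^1}\,\|g\|_{L^2}$, so the whole matter reduces to computing the first moment of the rescaled kernel. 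The change of variables $w = \lambda z$ produces exactly one negative power of $\lambda$,
\[
\|K_\lambda\|_{L^1} = \int_{\R^d} |z|\,\lambda^d\,|k(\lambda z)|\,\mathrm{d}z = \frac{1}{\lambda}\int_{\R^d}|w|\,|k(w)|\,\mathrm{d}w = \frac{1}{\lambda}\,\big\||\cdot|\,k\big\|_{L^1} \le \frac{C}{\lambda}\,\|H\|_{L^1}\,,
\]
which is precisely the advertised bound.

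The argument is a routine chain of elementary steps, so there is no genuinely deep obstacle; the only point requiring care is the bookkeeping with the Fourier conventions, namely checking that the moment condition on $\what{\chi}$ really controls $\big\||\cdot|\,k\big\|_{L^1}$. This is where the two summands in $H(\xi) = (1+|\xi|)\what{\chi}(\xi)$ play distinct roles: the constant $1$ controls $\|k\|_{L^1}$ (ensuring $k$ is a genuine $L^1$ kernel and the operator is well defined), while the factor $|\xi|$ controls the first moment $\big\||\cdot|\,k\big\|_{L^1}$, and it is the latter that delivers the gain of $\lambda^{-1}$. I would double-check explicitly that the kernel rescaling produces $\lambda^{-1}$ rather than another power before concluding. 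Note also that we only use the $L^2$ endpoint here, so the simplest form of Young's inequality suffices, and no $L^p$ machinery is needed.
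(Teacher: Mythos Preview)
Your proof is correct and is precisely the standard argument from Lemma~2.97 of \cite{BCD}, which is exactly what the paper cites in lieu of a proof; the paper does not supply its own argument for this lemma. Your bookkeeping on the Fourier conventions (in particular the identification $k(x)=c\,\what{\chi}(-x)$, which converts the moment hypothesis on $\what{\chi}$ into control of $\big\||\cdot|\,k\big\|_{L^1}$) and on the scaling that yields the $\lambda^{-1}$ factor is accurate.
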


{\small

}

\end{document}